\definecolor{colormy}{rgb}{0.8,0.05,0.05}
\definecolor{mycolor}{rgb}{0.25,0.99,0.25}
\tikzstyle directed=[postaction={decorate,decoration={markings,
    mark=at position #1 with {\arrow{>}}}}]
\tikzstyle rdirected=[postaction={decorate,decoration={markings,
    mark=at position #1 with {\arrow{<}}}}]
\newcommand{\Hom}{\mathrm{Hom}}
\newcommand{\Ext}{\mathrm{Ext}}
\newcommand{\Tor}{\mathrm{Tor}}
\newcommand{\Char}{\mathrm{ch}}
\newcommand{\spa}{\mathrm{span }}
\newcommand{\Soc}{\mathrm{Soc}}
\newcommand{\Hd}{\mathrm{Hd}}
\def\Z{{\mathbb Z}}
\def\Q{{\mathbb Q}}
\def\P{{\mathbb P}}
\def\Ind{\mathrm{Ind}}
\def\cha{\mathrm{char}}
\theoremstyle{definition}
\newtheorem{thm}{Theorem}[section]
\newtheorem{cor}[thm]{Corollary}
\newtheorem{lem}[thm]{Lemma}
\newtheorem{prop}[thm]{Proposition}
\theoremstyle{definition}
\newtheorem{problemcounter}[thm]{Problem}
\numberwithin{equation}{section}
\declaretheorem[style=definition,name=Definition,qed=$\blacktriangle$,numberlike=thm]{defn}
\declaretheorem[style=definition,name=Remark,qed=$\blacktriangle$,numberlike=thm]{rem}
\title{Representation theory via cohomology of line bundles}
\author{Henning Haahr Andersen}
\address{Centre for Quantum Geometry (QM), Imada,
University of Southern Denmark, Odense,  Denmark}
\email{h.haahr.andersen@gmail.com}
\date{}							
\begin{document}

\maketitle

{\centering \it{Dedicated to the memory of Jim Humphreys}\par}

\begin{abstract}
Let $G$ be a reductive algebraic group over a field $k$ and let $B$ be a Borel subgroup in $G$. We demonstrate how a number of results on the cohomology of line bundles on the flag manifold $G/B$ have had interesting consequences in the representation theory for $G$. And vice versa.  Our focus is on the case where the characteristic of $k$ is positive. In this case both the vanishing behavior of the cohomology modules for a line bundle on $G/B$ and the $G$-structures of the non-zero cohomology modules are still very much open problems. We give an account of the developments over the years, trying to illustrate what is now known and what is still not known today.

\end{abstract}

\section{Introduction}
Let $G$ be a connected reductive algebraic group over an algebraically closed field $k$ and let $B$ be a Borel subgroup in $G$. Then any finite dimensional $B$-module $E$ induces a vector bundle (locally free sheaf) $\mathcal L(E)$ on the homogeneous space $G/B$ as follows: For any open subset $U \subset G/B$  the set of sections of  $\mathcal L(E)$ over $U$ is
\[ \Gamma(U, \mathcal L(E)) = \{\varphi:\pi^{-1}(U) \rightarrow E \mid \varphi (xb) = b^{-1} \varphi(x), x \in \pi^{-1}(U), b \in B \}
\]
Here $\pi$ denotes the canonical map $G \to G/B$, and the maps $\varphi$ considered are the regular maps from the open subvariety $\pi^{-1}(U)$ of the affine variety $G$ to the affine space $E \simeq k^n$.

In particular, the space of global sections $\Gamma (G/B,\mathcal L(E))$ has a natural $G$-action given by $g \varphi : x \mapsto \varphi (g^{-1} x), g,x \in G, \varphi \in \Gamma((G/B, \mathcal L(E))$. This  is also the $G$-module obtained by applying the induction functor $\Ind_B^G$ to $E$. More generally, we can for any $i \geq 0$ identify the sheaf cohomology module $H^i(G/B, \mathcal L(E))$ with the module obtained by applying the right derived functor $R^i\Ind_B^G$ to $E$, cf. \cite{RAG}, Proposition I.5.12.  We denote this module $H^i(E)$ for short. 

The aim of this paper is to give - with all deliberate hindsight - an account of the interplay between the representation theory for $G$ and the study of the cohomology modules $H^i(E)$. A major role in this study is played by the line bundles, i.e. the case where $E$ is a $1$-dimensional $B$-module. As will become clear there have been interactions in both directions : results in the representation theory have been obtained from investigating the cohomology of line bundles on $G/B$, and representation theoretic results have influenced the computations of cohomology modules. Key examples are Borel-Weil-Bott theory, the strong linkage principle,  Kempf's vanishing theorem, Frobenius splitting, Jantzen type sum formulas.

We begin our account by the Borel-Weil-Bott theory dating back to the early 1950's and follow up by giving some of the important developments - especially in modular representation theory - since then. Of course we can only present a tiny bit of the total work done over this 70-year period. Moreover, our choices are highly biased towards our own interests and our own contributions. Fortunately, there already exists several surveys and books in the area with broader perspectives.  In particular, the well known book by Jantzen, \cite{RAG}, gives an extensive account of many developments in the field (up until its publication in 2003). 

In our treatment here we have included some proofs and given references for the remaining ones. The proofs we give will often contain simplifications of the original proofs. In other cases we have added further details as we have found appropriate.  
The paper also contains a number of remarks, which we hope put the developments into perspective.  And we mention several problems, which are still open and waiting to be explored.

A famous underlying challenge, namely the problem of determining the irreducible characters for $G$ in characteristic $p>0$, has been a key motivating factor and driving force behind a lot of the work we discuss in this survey. However, we have not included anything on the marvelous recent breakthrough on this problem (and on the related a priori even harder looking problem of determining the characters of all indecomposable tilting modules for $G$). For this story we refer to \cite{AMRW} and \cite{RW}, cf. also the further references in these papers.

A look at the list of publications by J.E. Humphreys reveals that he - over the major part of the period we are treating -  had a keen interest in exploring the relations between representation theory and cohomology  of line bundles on $G/B$. His very last mathematical note, \cite{Hu18} concerned these topics. In fact, Jim's questions, results, suggestions, and conjectures over the years have strongly influenced a lot of the work done in this area. This certainly includes my own work, beginning when I was a graduate student  at MIT in 1975-77.  I'm very happy to dedicate this paper to his memory.

\section{The Borel-Weil and Borel-Weil-Bott theorems}

The original formulations of the two theorems discussed in this section originally concerned compact complex Lie groups. In our formulation we have stated the results for connected reductive algebraic groups $G$ (using the notation as in the introduction). As we shall point out more explicitly below, the theorems are strictly characteristic $0$ results. 

We shall need a little more notation. Choose a maximal torus $T$ contained in $B$ and let $X = X(T)$ denote the character group for $T$. If $\lambda \in X$ we extend it to $B$ by letting it be trivial on the unipotent radical $U \subset B$. Inside $X$ we have the root system $R$ for $(G,T)$ and we choose the the set of positive roots $R^+$ to be those of the Borel subgroup opposite to $B$. The set of simple roots in $R^+$ we denote by $S$, and the set of dominant weights $X^+$ is the set of $\lambda \in X$ for which $\langle \lambda, \alpha^\vee \rangle \geq 0$ for all $\alpha \in S$.
The Weyl group $N_G(T)/T$ for $G$ is denoted $W$. It acts naturally on $X$. In both this and the rest of this paper we will also use the ``dot''-action of $W$ on $X$ given by
$$ w \cdot \lambda = w(\lambda + \rho) - \rho, \lambda \in X, w \in W.$$
Here $\rho$ is half the sum of the positive roots. For convenience we assume $\rho \in X$.

If $M$ is a $T$-module and $\lambda \in X$ we define the $\lambda$-weight space in $M$ by
$$ M_\lambda = \{m \in M \mid t m = \lambda(t) m, \;t \in T\}.$$
We say that $\lambda$ is a weight of $M$ if $M_\lambda \neq 0$. The dimension of $M_\lambda$ is called the multiplicity of $\lambda$ as a weight of $M$.

Finally, the length function on $W$ corresponding to the set of simple roots in $R^+$ is denoted $\ell$.

\subsection{The Borel-Weil theorem}

\begin{thm} \label{B-W}(The Borel-Weil theorem, \cite{Serre}) Assume $k$ has characteristic $0$. Then for each $\lambda \in X^+$ the $G$-module $H^0(\lambda)$ is irreducible. Moreover, any finite dimensional $G$-module is isomorphic to $H^0(\lambda)$ for a unique $ \lambda \in X^+$.
\end{thm}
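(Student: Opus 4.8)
The plan is to identify $H^0(\lambda)$ with the finite dimensional irreducible $G$-module of highest weight $\lambda$, using Frobenius reciprocity together with Weyl's complete reducibility theorem (the one genuinely characteristic-zero input). Since $G$ is connected, a $G$-module is the same thing as an integrable $\mathfrak g$-module for $\mathfrak g = \mathrm{Lie}(G)$, so I may freely use the classical highest weight theory: for each $\mu \in X^+$ there is, up to isomorphism, a unique finite dimensional irreducible $G$-module $L(\mu)$; its highest weight with respect to $R^+$ equals $\mu$ and occurs with multiplicity one; every finite dimensional irreducible is of this form; and a highest weight is automatically dominant, by the $\mathfrak{sl}_2$-theory applied to each simple coroot. (The second sentence of the theorem is to be read for \emph{irreducible} $G$-modules.)

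Granting that every finite dimensional $G$-module is semisimple, it is enough to compute $\dim\Hom_G(L(\mu),H^0(\lambda))$ for $\mu\in X^+$. Frobenius reciprocity --- the adjunction between restriction $\mathrm{res}^G_B$ and $\Ind_B^G = H^0$, cf. \cite{RAG}, I.3.4 --- provides a natural isomorphism $\Hom_G(L(\mu),H^0(\lambda)) \cong \Hom_B(L(\mu), k_\lambda)$, where $k_\lambda$ denotes the one-dimensional $B$-module on which $T$ acts through $\lambda$ and the unipotent radical $U$ of $B$ acts trivially. So the crux is the computation of $\Hom_B(L(\mu),k_\lambda)$.

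For this, note that any $B$-homomorphism $L(\mu)\to k_\lambda$ annihilates the subspace $\mathfrak n^- L(\mu)$, where $\mathfrak n^- = \mathrm{Lie}(U) = \bigoplus_{\alpha\in R^+}\mathfrak g_{-\alpha}$ --- recall that $R^+$ consists of the roots of the Borel $B^-$ opposite to $B$, so the roots of $B$ itself are the negative ones. Hence such a homomorphism factors through the coinvariants $L(\mu)/\mathfrak n^- L(\mu)$. Since $L(\mu)$ is generated over $U(\mathfrak g)$, and therefore over $U(\mathfrak n^-)$, by a highest weight vector $v^+$ of weight $\mu$, we get $L(\mu) = k v^+ + \mathfrak n^- L(\mu)$, so these coinvariants are at most one-dimensional; and they are exactly $k_\mu$, because $\mathfrak n^-$ strictly lowers weights, whence the $\mu$-weight space of $\mathfrak n^- L(\mu)$ vanishes and $v^+$ survives. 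Therefore $\Hom_B(L(\mu),k_\lambda) \cong \Hom_T(k_\mu,k_\lambda)$, which is $k$ if $\mu = \lambda$ and $0$ otherwise. Feeding this back, $H^0(\lambda)\cong\bigoplus_{\mu\in X^+}L(\mu)^{\oplus\dim\Hom_B(L(\mu),k_\lambda)} = L(\lambda)$; in particular $H^0(\lambda)\neq 0$ and it is irreducible. For the remaining assertion, an arbitrary finite dimensional irreducible $G$-module is $L(\mu)$ for a unique $\mu\in X^+$ by highest weight theory, hence is isomorphic to $H^0(\mu)$, and $H^0(\lambda)\cong H^0(\mu)$ forces $\lambda=\mu$ by comparing highest weights.

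I expect the only real subtlety to be the bookkeeping forced by the paper's sign convention: since $R^+$ is chosen to be the root system of $B^-$ rather than of $B$, it is $B$ whose Lie algebra carries the \emph{negative} root spaces, and one must check that the $U$-coinvariants of $L(\mu)$ isolate its \emph{highest} weight, not its lowest. Beyond that the argument is purely formal once Weyl's theorem is in hand; it also makes clear why the statement fails in characteristic $p$, where $H^0(\lambda)$ is in general not semisimple and one can only assert that $L(\lambda)$ is its socle.
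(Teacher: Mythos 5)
Your argument is correct, and it is worth noting at the outset that the paper itself offers no proof of Theorem \ref{B-W} at all: it is stated as a classical result with the reference \cite{Serre}, whose original approach (and Bott's, Demazure's later ones) runs through compact groups or the geometry of $G/B$ rather than the purely algebraic route you take. Your proof is the standard Frobenius-reciprocity argument: the adjunction $\Hom_G(V,\Ind_B^G k_\lambda)\cong\Hom_B(V|_B,k_\lambda)$ of \cite{RAG}, I.3.4, the computation of $\Hom_B(L(\mu),k_\lambda)$ via the $\mathrm{Lie}(U)$-coinvariants of $L(\mu)$, and Weyl complete reducibility to convert the multiplicity count $\dim\Hom_G(L(\mu),H^0(\lambda))=\delta_{\lambda\mu}$ into $H^0(\lambda)\cong L(\lambda)$. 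You also handled the paper's sign convention correctly: since $R^+$ consists of the roots of the Borel opposite to $B$, one has $\mathrm{Lie}(U)=\bigoplus_{\alpha\in R^+}\mathfrak g_{-\alpha}$, so the $U$-coinvariants of $L(\mu)$ are indeed the highest weight line $k_\mu$, which is exactly what makes $H^0(\lambda)$ have highest weight $\lambda$ (in agreement with Theorem \ref{Chev}). Two small economies/remarks: you could replace the appeal to Lie-algebra highest weight theory by citing the paper's Theorem \ref{Chev} (Chevalley), which already classifies the irreducibles by dominant highest weights in all characteristics; then the only genuinely characteristic-zero input is linear reductivity, which is precisely the point where the modular case breaks down, as you observe. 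And you silently use that $H^0(\lambda)$ is a finite-dimensional (equivalently, rational semisimple) $G$-module before decomposing it; this is standard ($G/B$ is complete), but deserves a word. Finally, your reading of ``any finite dimensional $G$-module'' as ``any finite dimensional irreducible $G$-module'' is clearly the intended one, and your uniqueness argument by comparing highest weights is fine.
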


\begin{rem}
Suppose $\cha (k) = p >0$. Then the analogue of Theorem \ref{B-W} is false already for $G= SL_2$. In fact, in that case $X^+ = \Z_{\geq 0}$ and the module $H^0(\lambda)$ is only irreducible for very special values of  $\lambda$, namely $\lambda  = ap^n -1$ for some $0 < a < p$ and $n \geq 0$. 
\end{rem}

However, in all characteristics we have the following classification of the finite dimensional irreducible $G$-modules. We use the notation $w_0$ for the longest element in $W$.
\begin{thm} \label{Chev} (Chevalley, see \cite{Chev})
For each $ \lambda \in X^+$ the $G$-module $H^0(\lambda)$ has a unique irreducible submodule $L(\lambda)$, and if $L $ is an arbitrary finite dimensional irreducible $G$-module then $L$ is isomorphic to $L(\lambda)$ for a unique $\lambda \in X^+$. Moreover, all weights $\mu$ of $H^0(\lambda)$, respectively $L(\lambda)$, satisfy $w_0\lambda \leq \mu \leq \lambda$, and the weight $\lambda$ occurs with multiplicity $1$. 

\end{thm}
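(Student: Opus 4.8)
The plan is to build everything on the induced modules $H^0(\mu) = \Ind_B^G\mu$ and their behaviour under evaluation of highest-weight vectors, working uniformly in all characteristics. First I would recall the basic facts about $H^0(\mu)$ that hold over any field: by Kempf-type reasoning (or directly, as in \cite{RAG}, II.2) one knows $H^0(\mu)\neq 0$ if and only if $\mu\in X^+$, that $H^0(\mu)_\mu$ is one-dimensional and every weight $\nu$ of $H^0(\mu)$ satisfies $\nu\leq\mu$, and that $H^0(\mu)$ contains a unique highest weight vector up to scalar (it generates the line $H^0(\mu)_\mu$, which is a $B$-submodule). From this last point I would deduce that $H^0(\mu)$ has a \emph{simple socle}: any nonzero $G$-submodule $M\subseteq H^0(\mu)$ must contain a highest weight vector (take any maximal weight of $M$; since all weights lie below $\mu$ and the $\mu$-weight space is $B$-stable and one-dimensional, a weight-space argument forces that vector to be $B$-stable), hence contains $H^0(\mu)_\mu$, so the intersection of any two nonzero submodules is nonzero. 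Therefore the sum of all simple submodules is itself simple; call it $L(\mu)$. Since $L(\mu)$ contains the highest weight vector, $L(\mu)_\mu$ is one-dimensional, and every weight $\nu$ of $L(\mu)$ satisfies $\nu\leq\mu$; applying this also to $w_0\mu$ via the symmetry $H^0(\mu)^*\cong H^0(-w_0\mu)$ (dualizing sends $L(\mu)$ to $L(-w_0\mu)$ and reverses the order) gives the lower bound $w_0\mu\leq\nu$ as well.

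Next I would prove that $\mu\mapsto L(\mu)$ is injective on $X^+$: if $L(\mu)\cong L(\mu')$ then each has $\mu$ (resp.\ $\mu'$) as a maximal weight, but the isomorphism identifies the sets of weights, and maximality together with the established bounds forces $\mu=\mu'$. For surjectivity, let $L$ be an arbitrary finite-dimensional irreducible $G$-module. Pick a maximal weight $\mu$ of $L$; because $L$ is $B$-stable and $\mu$ is maximal, the weight space $L_\mu$ is a one-dimensional $B$-submodule affording $\mu$, and by maximality among \emph{all} weights of $L$ one checks $\mu$ is dominant (if $\langle\mu,\alpha^\vee\rangle<0$ for some simple $\alpha$, then $s_\alpha\mu=\mu-\langle\mu,\alpha^\vee\rangle\alpha>\mu$ would be a weight, using that $L$ is $G$-stable hence its weight set is $W$-invariant). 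By Frobenius reciprocity $\Hom_G(L,H^0(\mu))\cong\Hom_B(L,\mu)\neq 0$, and since $L$ is irreducible this map is injective, so $L$ embeds in $H^0(\mu)$; as $L$ is then a nonzero submodule it contains the simple socle, i.e.\ $L\supseteq L(\mu)$, whence $L=L(\mu)$ by irreducibility of $L$.

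The one genuinely delicate point -- and the step I expect to be the main obstacle to present cleanly -- is establishing the foundational properties of $H^0(\mu)$ itself in arbitrary characteristic, in particular that $H^0(\mu)\neq 0$ exactly when $\mu$ is dominant and that its $\mu$-weight space is one-dimensional; in characteristic $0$ this is part of Borel-Weil (Theorem \ref{B-W}), but in characteristic $p$ it requires either Kempf's vanishing theorem or an explicit analysis of sections, and I would simply cite \cite{RAG}, II.2, for these facts rather than reprove them. Everything else is a formal consequence of the existence of a unique $B$-stable highest-weight line in $H^0(\mu)$, the $W$-invariance of weight sets of $G$-modules, and Frobenius reciprocity. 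As a final remark, the longest-element symmetry $L(\mu)^*\cong L(-w_0\mu)$ and the resulting bound $w_0\mu\leq\nu\leq\mu$ can alternatively be phrased by noting $w_0\mu$ is the unique minimal weight, occurring with multiplicity one, which is convenient for later comparisons with Weyl modules $\Delta(\mu)=H^0(-w_0\mu)^*$.
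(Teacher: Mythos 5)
The paper itself gives no proof of this theorem; it is stated with an attribution to Chevalley \cite{Chev}, so there is no internal argument to compare yours with. What you propose is the standard modern proof via induced modules, essentially \cite{RAG}, II.2, and its architecture --- simple socle of $H^0(\mu)$, dominance of a maximal weight of a simple module, Frobenius reciprocity to embed it into some $H^0(\mu)$, injectivity of $\mu\mapsto L(\mu)$ via highest weights, and the weight bounds --- is the right one and does yield the statement.

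Two steps are not correct as written, though both are repairable. First, the duality you invoke, $H^0(\mu)^*\cong H^0(-w_0\mu)$, fails in characteristic $p$: the dual of $H^0(\mu)$ is the Weyl module $\Delta(-w_0\mu)$, in general not isomorphic to the induced module $H^0(-w_0\mu)$ (combined with your own closing identity $\Delta(\mu)=H^0(-w_0\mu)^*$ it would force $\Delta(\mu)\cong H^0(\mu)$). Only the characters agree, and for the lower bound $w_0\mu\leq\nu$ no duality is needed: the weight set of a finite dimensional $G$-module is $W$-stable, so $\nu\leq\mu$ for every weight $\nu$ gives $w_0\nu\leq\mu$, i.e.\ $\nu\geq w_0\mu$. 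Second, with the paper's conventions ($R^+$ consists of the roots of the Borel opposite to $B$), a vector of maximal weight is fixed by the unipotent radical of the opposite Borel, call it $B^+$; so $L_\mu$ is a $B^+$-stable line, not a $B$-submodule (and its one-dimensionality is not yet known at that point, though it is not needed). What Frobenius reciprocity $\Hom_G(L,H^0(\mu))\cong\Hom_B(L,\mu)$ actually requires is that the one-dimensional $B$-module $\mu$ occur as a $B$-\emph{quotient} of $L$; this is supplied by noting that $\bigoplus_{\nu\neq\mu}L_\nu$ is $B$-stable (the unipotent radical of $B$ lowers weights and $\mu$ is maximal), with quotient a sum of copies of $\mu$. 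The same care is needed in your simple-socle step: a maximal weight of a nonzero submodule $M$ produces a vector fixed by the unipotent radical of $B^+$, and to conclude $M\supseteq H^0(\mu)_\mu$ you must use that such vectors in $H^0(\mu)$ form exactly the highest-weight line (the evaluation-at-$1$/big-cell argument of \cite{RAG}, II.2.2--II.2.3); your parenthetical ``weight-space argument'' does not by itself exclude a submodule all of whose weights are strictly below $\mu$. Since you cite \cite{RAG}, II.2 for the foundational facts anyway, these are corrections of formulation rather than of strategy.
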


\subsection{The Borel-Weil-Bott theorem}
We shall need the fact, that for any $\lambda \in X$ the intersection $(W \cdot \lambda) \cap X^+$ is either empty or equal to $\lambda^+$ for some unique $\lambda^+ \in X^+$. In the first case we say that $\lambda$ is singular, and in the second that $\lambda$ is regular. 

We can now formulate the Borel-Weil-Bott theorem (or sometimes just Bott's theorem) as follows

\begin{thm} \label{Bott} (Bott 1957, \cite{Bott}). Assume that $k$ has characteristic $0$. For $\lambda \in X$ we have
$$ H^i(\lambda) = \begin{cases} {H^0(\lambda^+) \text{ when $\lambda = w\cdot \lambda^+$ for some $w \in W$ and $i = \ell(w),$}} \\ {0   \text { otherwise. }} \end{cases}$$
\end{thm}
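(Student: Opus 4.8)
The plan is to reduce everything to the rank-one situation via the standard family of parabolic subgroups attached to the simple reflections, and then bootstrap from the Borel–Weil theorem (Theorem~\ref{B-W}) together with Serre duality on the fibers. First I would set up the basic tools: (1) Kempf's vanishing / the Borel–Weil theorem give $H^i(\lambda)=0$ for $i>0$ and $H^0(\lambda)=L(\lambda)$ irreducible when $\lambda\in X^+$; (2) for a simple root $\alpha\in S$ with associated minimal parabolic $P_\alpha\supset B$, one has the fibration $G/B\to G/P_\alpha$ with fibers $P_\alpha/B\cong\P^1$, and the Leray spectral sequence for this fibration computes $H^\bullet(G/B,\mathcal L(\lambda))$ from $H^\bullet(P_\alpha/B,\mathcal L(\lambda))$ sheafified over $G/P_\alpha$. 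On $\P^1$ the cohomology of line bundles is classical: for the weight $\lambda$ restricted to the Levi $L_\alpha$, writing $n=\langle\lambda,\alpha^\vee\rangle$, the fiberwise cohomology vanishes entirely if $n=-1$; is concentrated in degree $0$ with fiber the irreducible $SL_2$-module of highest weight $n$ if $n\ge 0$; and is concentrated in degree $1$ if $n\le -2$, where Serre duality on $\P^1$ identifies it with the degree-$0$ cohomology of $\mathcal L(s_\alpha\cdot\lambda)$.

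The heart of the argument is then an induction on $\ell(w)$ establishing what I would call the \emph{Bott reflection lemma}: if $\langle\lambda,\alpha^\vee\rangle\ge 0$ then $H^{i}(\lambda)\cong H^{i+1}(s_\alpha\cdot\lambda)$ for all $i$ (and by the $n=-1$ case, $H^i(\lambda)=0$ for all $i$ whenever $\langle\lambda,\alpha^\vee\rangle=-1$). This is exactly the content of the degenerating Leray spectral sequence above: because the fiberwise cohomology is concentrated in a single degree, $E_2=E_\infty$ and one reads off the isomorphism, using transitivity of induction $\Ind_B^G=\Ind_{P_\alpha}^G\circ\Ind_B^{P_\alpha}$ (equivalently, the composition-of-derived-functors / Grothendieck spectral sequence of \cite{RAG}). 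To prove the theorem for a given $\lambda$, I would first dispose of the singular case: if $\lambda$ is singular then $\langle\lambda+\rho,\alpha^\vee\rangle=0$ for some $\alpha\in S$ after applying a suitable $w\in W$ to make the relevant pairing equal to $-1$ in the dotted action, whence all $H^i(\lambda)=0$ by the vanishing clause. In the regular case, write $\lambda=w\cdot\lambda^+$ with $\lambda^+\in X^+$ dominant and $w$ of minimal length with this property; a standard length-reduction argument (pick a simple $\alpha$ with $\ell(s_\alpha w)<\ell(w)$, so that $s_\alpha w\cdot\lambda^+$ still has the pairing against $\alpha^\vee$ nonnegative) lets one peel off one reflection at a time, each application of the reflection lemma shifting the cohomological degree down by one, terminating at $H^0(\lambda^+)$ in degree $0$. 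Uniqueness of $w$ (hence of the degree $\ell(w)$) follows because $W$ acts simply transitively on the chamber-translates of a regular weight.

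The step I expect to be the main obstacle — and the one requiring genuine care rather than bookkeeping — is the degeneration of the Leray spectral sequence for $G/B\to G/P_\alpha$, i.e.\ correctly identifying the $B$-module (equivalently the $G/P_\alpha$-equivariant vector bundle) given fiberwise by $H^j(P_\alpha/B,\mathcal L(\lambda))$ and verifying that higher cohomology on the base interacts cleanly. Concretely one must check that $R^j\Ind_B^{P_\alpha}(\lambda)$, as a $P_\alpha$-module, is either $0$, or $H^0_{L_\alpha}(\text{of the }\alpha\text{-string weight})$ inflated to $P_\alpha$, or its Serre dual, and that the characteristic-$0$ hypothesis is used \emph{precisely here} (via complete reducibility of $SL_2$-modules, which is what makes the $n\ge 0$ fiber cohomology the full irreducible module with no higher $P_\alpha$-cohomology and what makes Serre duality behave). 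Everything downstream — the linkage-via-length combinatorics in $W$, the singular-weight vanishing, the termination of the induction — is then formal. I would also remark that in characteristic $p$ this single step is exactly what fails: the fiberwise computation on $\P^1$ still holds, but the composition spectral sequence no longer degenerates in general, which is why Theorem~\ref{Bott} is a strictly characteristic-zero statement, as already flagged for $SL_2$ in the remark above.
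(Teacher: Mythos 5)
The paper itself does not prove Theorem \ref{Bott}; it only states it, pointing to Bott's original paper and to Demazure's proofs \cite{D68}, \cite{D76} (and to \cite{An79b}, Remark 3.3). What you have reconstructed is, in outline, exactly Demazure's reflection argument, and it is correct: the fibration $G/B\to G/P_\alpha$, the fiberwise $\P^1$-computation (including total vanishing when $\langle\lambda,\alpha^\vee\rangle=-1$, which is the paper's Remark \ref{wall-vanishing}), the characteristic-zero identification of $P_\alpha$-modules $H^0_\alpha(\lambda)\simeq H^1_\alpha(s_\alpha\cdot\lambda)$ giving $H^i(\lambda)\simeq H^{i+1}(s_\alpha\cdot\lambda)$ for $\langle\lambda,\alpha^\vee\rangle\ge 0$, and the induction on $\ell(w)$. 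This is also the point of contact with Section 5.1 of the paper, where in characteristic $p$ one retains only the homomorphism $c^i_\alpha(\lambda)$ of Lemma \ref{homos} instead of an isomorphism; the remark there that in characteristic $0$ one has $V_\alpha^\lambda\simeq H^0_\alpha(\lambda+\rho-\alpha)\otimes(-\rho)$ is precisely the ingredient you isolate as the crux.

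Two small corrections. First, your singular case as written only yields vanishing for the translate $w\cdot\lambda$ whose pairing with some simple coroot is $-1$; you still have to transport this back to $\lambda$. The clean way is an induction on the number of positive roots $\beta$ with $\langle\lambda+\rho,\beta^\vee\rangle<0$: if that number is zero, some simple $\alpha$ has $\langle\lambda+\rho,\alpha^\vee\rangle=0$ and you are done; otherwise pick a simple $\alpha$ with $\langle\lambda,\alpha^\vee\rangle\le-2$, apply the reflection isomorphism to $s_\alpha\cdot\lambda$ (which is still singular with one fewer negative pairing) and note separately that $H^0(\lambda)=0$ since $\lambda$ is not dominant. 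Second, your closing diagnosis of the failure in characteristic $p$ is slightly off: the Leray/composite-functor spectral sequence degenerates for every line bundle in every characteristic, because $H^j_\alpha(\lambda)$ is always concentrated in a single degree (or vanishes identically when the pairing is $-1$). What breaks in characteristic $p$ is the identification of the two $P_\alpha$-modules $H^0_\alpha(\lambda)$ and $H^1_\alpha(s_\alpha\cdot\lambda)$: they have equal characters but are the dual Weyl module and the Weyl module for the $\alpha$-Levi and are in general non-isomorphic, whence only the map of Lemma \ref{homos}. Your earlier sentence locating the characteristic-zero input at complete reducibility of $SL_2$-modules is the accurate one; keep that and drop the claim about non-degeneration.
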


As indicated this result was obtained by R. Bott in 1957. Later M. Demazure gave first a simple proof \cite{D68} and then a very simple proof  \cite{D76}  of Bott's theorem. An alternative very simple proof can be found in \cite{An79b}, Remark 3.3.

We like to describe Bott's theorem as containing two parts. Firstly, it describes completely the vanishing behavior of all homogeneous line bundles on $G/B$: those corresponding to regular characters
 have exactly one non-vanishing cohomology group, and all cohomology vanish for the line bundles associated to singular characters. Secondly, it proves that each non-vanishing cohomology module is irreducible, and it singles out its highest weight. In characteristic $p>0$ the first part holds for $SL_2$ but for no simple group of higher rank. D. Mumford was the first to find an example of a line bundle with more than one non-vanishing cohomology module. His example was a line bundle on the $3$-dimensional flag variety $SL_3/B$. For this flag variety his student L. Griffith gave later a complete description of the vanishing behavoir of line bundles in his thesis, cf. \cite{Grif}. 
 
 As observed above the second part of Bott's theorem fails already for $SL_2$ (and hence for all other groups as well). This presents us with two problems:

\begin{problemcounter} Describe the vanishing behavior for the cohomology of all line bundles on $G/B$.
\end{problemcounter}

\begin{problemcounter}
Describe the $G$-module structures of the cohomology modules of all line bundles on $G/B$.
\end{problemcounter}

In the following sections we shall give some partial answers to these two problems. As will become clear complete answers still seem very much out of reach. However, the most important subproblem, namely the problem of determining the composition factors of $H^0(\lambda)$ for all $\lambda \in X^+$ has now been settled (in the sense that the composition factor multiplicities of $H^0(\lambda)$ is expressed in terms of so called $p$-Kazhdan-Lusztig polynomials) by the recent breakthrough by Riche and Williamson \cite{AMRW}, \cite{RW}  (for primes less than  $2h-2$ see also \cite{So}). 

Even though we have separated the problem of determining the cohomology modules $H^i(\lambda)$, $i \in \Z_\geq 0$ and $\lambda \in X$ into the two individual problems, Problems 2.5 and Problem 2.6, it will hopefully be clear from the following sections that they are very much interrelated and we advocate very strongly to explore them together. For instance, my proof of the strong linkage principle (see  Section 5.1 below) only succeeded when I decided to explore the structure of the cohomology modules without knowing their vanishing behavior.

\section{Kempf's vanishing theorem}

A specially important part of Problem 2.5 is to describe the cohomology of dominant line bundles, i.e.  line bundles induced by dominant characters. Note that in characteristic zero Bott's theorem gives that all higher cohomology of such line bundles vanishes. Moreover, it is easy to see that if $\lambda$ is strictly dominant, i.e. $\lambda - \rho \in X^+$, then $\mathcal L(\lambda)$ is an ample line bundle on $G/B$. Hence for such a character we have in all characteristics that $H^i(n \lambda) = 0$ for $i >0$ whenever $n \gg 0$. This led to the expectation that the vanishing of the higher cohomology of all line bundles with dominant weights would hold in all characteristics. This turned out to be true:

\begin{thm} \label{Kempf} (Kempf's vanishing theorem, \cite{Ke76b}) $\text {If } \lambda \in X^+ \text { then for all } p \text { we have } H^i(\lambda) = 0 \text { for all }  i > 0.$
\end{thm}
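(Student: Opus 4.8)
The plan is to reduce the vanishing of $H^i(\lambda)$ for $\lambda \in X^+$ and $i > 0$ to a statement about the dominant weight $0$ together with a Grothendieck-type spectral sequence, and to exploit the fact that ample line bundles have vanishing higher cohomology after twisting by large multiples. The classical approach (due to Kempf, with later simplifications by Andersen, Haboush, Ramanan--Ramanathan, and the very short argument via Frobenius splitting) goes as follows. First I would recall that for $\lambda$ strictly dominant, $\mathcal L(\lambda)$ is ample, so by Serre vanishing $H^i(n\lambda) = 0$ for all $i > 0$ once $n \gg 0$. The goal is to bootstrap from ``$n \gg 0$'' down to $n = 1$, and in fact to all dominant $\lambda$.

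The key mechanism is the following cohomology-raising/lowering technique. For a dominant weight $\mu$ one studies the geometry of $G/B$ relative to the parabolic $P_\alpha$ for a simple root $\alpha$: the fibration $G/B \to G/P_\alpha$ has $\mathbb P^1$-fibers, and cohomology of a line bundle $\mathcal L(\nu)$ along these fibers is governed by the value $\langle \nu, \alpha^\vee\rangle$. Using the Leray spectral sequence for this fibration and the $SL_2$-computation, one obtains relations between $H^i(\nu)$ and $H^{i\pm 1}(s_\alpha \cdot \nu)$. Iterating these and combining with the Serre-vanishing input for $n\lambda$, $n \gg 0$, one propagates the vanishing. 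Concretely, in the Frobenius-splitting proof one shows that $G/B$ is Frobenius split compatibly with a suitable divisor (the ``anticanonical'' one, i.e. the zero set of a section of $\mathcal L((p-1)\rho)$ or of the product of Schubert divisors), which immediately forces $H^i(\mathcal L) = 0$ for $i>0$ whenever $\mathcal L$ is a line bundle that is ample \emph{or} globally generated, and $\mathcal L(\lambda)$ for $\lambda \in X^+$ is globally generated since $H^0(\lambda) \ne 0$ (by Theorem~\ref{Chev}).

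I expect the main obstacle to be establishing the crucial positive-characteristic geometric input: either (a) the Frobenius splitting of $G/B$ compatible with the right divisor, which requires producing an explicit splitting section and checking it does not vanish on the relevant Schubert divisors, or (b) in the older approach, controlling the spectral-sequence bookkeeping uniformly enough that the induction on $\langle \lambda, \text{(sum of coroots)}\rangle$ or on $\ell(w_0)$ actually closes without circularity. In either route the characteristic-free content is delicate precisely because Bott's theorem (Theorem~\ref{Bott}) is unavailable: one cannot appeal to any vanishing beyond what ampleness plus Serre gives, so every step must be geometric. I would present the Frobenius-splitting argument as the cleanest modern proof, deducing the general dominant case from global generation, and remark that the original argument of Kempf proceeded by the $SL_2$-fibration induction sketched above.
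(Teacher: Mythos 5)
Your route is not the one the paper takes. The paper deduces Theorem \ref{Kempf} from the Frobenius--Steinberg theorem (Theorem \ref{F-S}): one proves the $G$-module isomorphism $H^i(p^n\cdot\lambda)\simeq St_n\otimes H^i(\lambda)^{(n)}$ by inducing in two steps through $G_nB$, using the vanishing of higher $G_nB/B$-cohomology, the identification $H^0(G_nB/B,(p^n-1)\rho)\simeq St_n|_{G_nB}$ and the tensor identity. Since $\lambda+\rho$ is strictly dominant, $\mathcal L(\lambda+\rho)$ is ample and $p^n\cdot\lambda=p^n(\lambda+\rho)-\rho$, so Serre vanishing gives $H^i(p^n\cdot\lambda)=0$ for $i>0$ and $n\gg 0$; the isomorphism then forces $St_n\otimes H^i(\lambda)^{(n)}=0$, hence $H^i(\lambda)=0$. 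So your general strategy (ampleness plus Serre for high Frobenius powers, bootstrapped down by a characteristic-$p$ mechanism, with the Steinberg module as the essential representation-theoretic input) is in the right spirit, but the mechanism you propose -- the Mehta--Ramanathan splitting of \cite{MeRa} -- is a different and historically later argument, one the paper explicitly sets aside in Remark \ref{rem on Kempf}(4). The paper's proof buys a stronger statement (an isomorphism of $G$-modules, which drives all of Section 4), not just vanishing.

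As written, your splitting argument also has a genuine gap. Frobenius splitting of $X$ gives injections $H^i(X,\mathcal L)\hookrightarrow H^i(X,\mathcal L^{p^\nu})$, which together with Serre vanishing handles \emph{ample} $\mathcal L$; but it does \emph{not} ``immediately force'' vanishing for globally generated line bundles: an ordinary elliptic curve is Frobenius split, $\mathcal O_X$ is globally generated, and $H^1(X,\mathcal O_X)\neq 0$. For dominant but non-regular $\lambda$ you therefore need an extra ingredient: either the refined notion of splitting \emph{relative to an ample divisor}, which does yield vanishing for all nef line bundles (see \cite{BK85}), or a descent along $G/B\to G/P_\lambda$, where $\mathcal L(\lambda)$ becomes ample, which in turn requires $R^i\pi_*\mathcal O=0$ -- itself a relative instance of the theorem you are proving, so circularity must be avoided. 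Finally, the existence of the splitting section (a $(p-1)$-th power coming from the Steinberg module, with the required compatibilities) is precisely the technical heart, and you defer it as ``the main obstacle'' rather than carrying it out; the same is true of your sketch of Kempf's original Schubert-variety induction. So the proposal is a plausible plan along a known alternative route, but not yet a proof, and its one precise claim about globally generated bundles is false without the relative-splitting refinement.
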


\begin{rem}  \label{rem on Kempf}
\begin{enumerate}
\item The importance of this theorem in modular representation theory is tremendous.  First of all, it immediately gives that for $\lambda \in X^+$ the character of $H^0(\lambda)$ is independent of the characteristic and hence given by the Weyl formula. Using Serre duality this implies that $H^N(w_0\cdot \lambda)$ is the Weyl module with highest weight $\lambda$. It also reveals that we have $\Z$-lattices, i.e. there is a module $H^0_\Z(\lambda)$, respectively $H^N_\Z(w_0\cdot \lambda)$  for the Chevalley group over $\Z$ corresponding to $G$, which is free over $\Z$ and has $H^0_Z(\lambda) \otimes_\Z k \simeq H^0(\lambda)$, respectively  $H^M_Z(w_0 \cdot \lambda) \otimes_\Z k \simeq H^N(w_0 \cdot \lambda)$. In particular, we see that to determine the irreducible characters, $\Char L(\lambda), \lambda \in X^+$ is equivalent to finding the composition factor multiplicities of all Weyl modules. All approaches to finding the irreducible characters in characteristic $p$ rely on this fact.

\item (on the proof of Kempf's theorem)
G. Kempf first proved this result for $G = SL_n$ for all $n$, \cite{Ke76a}. Then Lakshmi Bai, C. Musili and C.S. Seshadri extended the result to other classical groups, \cite{LMS}, before Kempf came up with his general proof. The method of proof in these papers are all algebraic geometric involving a close analysis of the restrictions of $\mathcal L(\lambda)$ to certain Schubert varieties in $G/B$ and $G/P$, $P$ a parabolic subgroup containing $B$.  As a side benefit this gave also important vanishing theorems for the higher cohomology of the restrictions of dominant line bundles to various Schubert varieties. 

\item (simple proofs and further work)
A few years after Kempf's general proof the author, see  \cite{An80b} (or Theorem \ref{F-S} below), and independently W. Haboush \cite{Ha}, came up with a much shorter proof of Theorem \ref{Kempf}. These proofs combine (well known) algebraic geometric facts, in particular the vanishing of higher cohomology of sufficiently high powers of ample line bundles, and the properties of the Frobenius morphism in prime  characteristics  with (equally well known) representation theoretic facts, especially the existence and properties of the Steinberg modules (see below). The author realized a few years \cite{An85} later that a slight twist of the arguments used in \cite{An80b} could be used to extend the vanishing result to all Schubert varieties in $G/B$. As corollaries this proved on the algebraic geometric side the normality of Schubert varieties and on the representation theoretic side the Demazure character formula. Results in the same directions had earlier been explored by M. Demazure, \cite {D74} in characteristic $0 $, and by V. Lakshmibai, C. Musili  and C.S. Seshadri, \cite{LMS} in general. In particular, it should be mentioned that C.S. Seshadri was the first to prove the normality of all Schubert varieties in all characteristics, see \cite{CSS}. 
\item Later, the Frobenius splitting method - invented and developed by Mehta and Ramanathan, \cite{MeRa} -  became a ``big industry'' with a large number of applications in both algebraic geometry and in representation theory. We do not report further on this line of research here but refer instead the reader to the book \cite{BK85}.
\end{enumerate}
\end{rem}

\section{On the vanishing behavior, Problem 2.5}
In this section we try to describe what is known and what is still not known about the vanishing of the cohomology modules $H^i(\lambda)$, $i \in \Z_{\geq 0}, \lambda \in X$. Throughout we assume (unless otherwise said) that the characteristic of $k$ is $p >0$.

\subsection{Preliminaries}
In the following $N$ will denote the set of positive roots. Alternatively, $N$ is the dimension of the flag manifold $G/B$.

By Grothendieck vanishing \cite{Gr}, Theorem 3.6.5 we have
\begin{equation} \label{Grot}  H^i(\lambda) = 0 \text { for all } \lambda \in X \text { when } i > N. \end{equation}

This theorem implies that for each $\lambda \in X$ we have only finitely many cohomology modules to worry about. 

We can further ``cut the vanishing problem in half'' by using Serre duality:
\begin{equation} \label{Serre} H^i(\lambda)^* \simeq H^{N-i}(-\lambda  - 2 \rho) \text { for all } 0 \leq i \leq N, \lambda \in X.
\end{equation}
Here, if $M$ is a $G$-module, we use the notation $M^*$  for the contragredient dual module. The isomorphism in (\ref{Serre}) is an isomorphism of $G$-modules. For later use we record also the following consequence
\begin{equation} \label{hom} \Hom_G(H^N(w_0 \cdot \lambda), H^0(\lambda)) \simeq k \text { for all } \lambda \in X^+. \end{equation}
In fact, by (\ref{Serre}) we have that $H^N(w_0 \cdot \lambda) \simeq H^0(-w_0 \lambda)^*$. Hence by Theorem \ref{Chev}  $\lambda$ is the highest weight of $H^N(w_0 \cdot \lambda)$, and it occurs with multiplicity $1$.  

\begin{rem} While we know of no representation theoretic proof of the Grothendieck vanishing theorem (\ref{Grot}), the Serre duality can be obtained ``purely representation-theoretically'' (and this proof generalizes to the 
quantum case), see \cite{AW}, Section 3.2.
\end{rem}

Let $n \geq 0$. The $n$'th Steinberg module is $St_n$. This  is the irreducible $G$-module with highest weight $(p^n-1)\rho$, i.e. 
$$ St_n = L((p^n-1)\rho). $$

We denote by $F: G \rightarrow G$ the Frobenius homomorphism on the group scheme $G$.  Its kernel is an infinitesimal group scheme denoted $G_1$. We define more generally  $G_n$  to be the kernel of $F^n$. If $V$ is an arbitrary $G$-module then $V^{(n)}$ denotes the $n$'th Frobenius twist of $V$. This means that $V^{(n)}$ as a vector space is identical to $V$ but its group action is the composite $ G \xrightarrow {F^{(n)}} G \rightarrow GL(V)$.  
We also define ``untwist'' of twisted modules: If $M = V^{(n)}$, i.e. if the restriction to $G_n$ of the $G$-action on $M$  is trivial, then we shall write $V = M^{(-n)}$.

A special case of Steinberg's tensor product theorem, \cite{St}, says
\begin{equation} \label{St-tensor} St_n = St_1 \otimes St_1^{(1)} \otimes \cdots \otimes St_1^{(n-1)}. 
\end{equation}

This implies that the Steinberg module is a (dual) Weyl module:
\[ St_n = H^0 ((p^n-1)\rho) \text { for all } n.\]

Note that by (\ref{St-tensor}) this statement reduces to the case $n=1$. In that case it follows from the strong linkage principle, \cite{An80a} (or see Theorem \ref{SLP} below).

Following \cite{An07} we shall set 
$$ D_p(i) = \{\lambda \in X \mid H^i(\lambda) \neq 0\}, \, i \geq 0, \, \lambda \in X.$$
(here we allow $p =0$).

Finally, we shall find it convenient to use the following notation 
$$ p^n\cdot \lambda = p^n (\lambda+\rho)-\rho, \;  n \geq 0, \, \lambda \in X,$$ 
and 
$$ X_n = \{\lambda \in X^+ \mid \langle \lambda, \alpha^\vee\rangle < p^n \text { for all simple roots } \alpha \}.$$
The elements in $X_n$ are called the $p^n$-restricted weights.

\subsection{The Frobenius-Steinberg theorem}

Using the above notation we have
\begin{thm} \label{F-S}(The Frobenius-Steinberg theorem, \cite{An80b}, Theorem 2.5).
Let $\lambda \in X$. Then we have $G$-module isomorphisms
$$H^i(p^n \cdot \lambda) \simeq St_n \otimes H^i(\lambda)^{(n)}$$ for all $i, n \geq 0$.
\end{thm}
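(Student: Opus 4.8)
The plan is to prove the Frobenius–Steinberg isomorphism $H^i(p^n\cdot\lambda)\simeq St_n\otimes H^i(\lambda)^{(n)}$ by reducing to the case $n=1$ and then exploiting the Frobenius factorization $G\to G/G_1\simeq G^{(1)}$ together with a projection‑formula/Leray argument. First I would observe that iterating the $n=1$ statement gives the general case: if $H^i(p\cdot\mu)\simeq St_1\otimes H^i(\mu)^{(1)}$ for all $\mu$, then applying this repeatedly and using $(p^n)\cdot\lambda = p\cdot(p^{n-1}\cdot\lambda)$ together with the tensor‑product decomposition $St_n = St_1\otimes St_1^{(1)}\otimes\cdots\otimes St_1^{(n-1)}$ from \eqref{St-tensor} yields the claim, since Frobenius twisting commutes with the cohomology functors $H^i(-)$ in the appropriate sense. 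So the real content is $n=1$.

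For $n=1$ the key idea is to rewrite the one‑dimensional $B$-module $p\cdot\lambda = p(\lambda+\rho)-\rho$ as a tensor product. Since $p\lambda$ is trivial on $B_1$ (it is $p$ times a character, hence lies in $pX$), we can write $p\lambda$ as the Frobenius twist $\lambda^{(1)}$ of the $B$-module $\lambda$, i.e.\ as the pullback of the $B^{(1)}$-character $\lambda$ along $F\colon B\to B^{(1)}$. Meanwhile $(p-1)\rho$ extends to the $G$-module $St_1 = L((p-1)\rho) = H^0((p-1)\rho)$, whose restriction to $B$ contains the line $(p-1)\rho$ as a $B$-submodule (this is the highest‑weight line, and it is a $B$-summand only after care — what one actually uses is that $St_1$ is self‑dual and has a $B$-socle filtration with $(p-1)\rho$ occurring, but the clean statement is that the $B$-character $p\cdot\lambda$ tensored appropriately relates $St_1$ to a line bundle). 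Concretely the input is the $B$-module isomorphism built from $St_1|_B$ having a one‑dimensional quotient (or sub) of weight $\pm(p-1)\rho$, which after twisting lets one identify $\mathcal L(p\cdot\lambda)$ with $St_1\otimes F^*\mathcal L(\lambda)$ as $G$-equivariant sheaves, where $F\colon G/B\to G^{(1)}/B^{(1)}$ is the Frobenius on the flag variety.

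Granting the sheaf‑level identity $\mathcal L(p\cdot\lambda)\simeq St_1\otimes F^*\mathcal L(\lambda)$ (with $St_1$ viewed as a trivial‑bundle factor carrying its $G$‑action), the computation of cohomology proceeds by the projection formula:
\[
H^i(G/B,\ St_1\otimes F^*\mathcal L(\lambda)) \simeq H^i(G/B,\ F^*(\,\underline{St_1}^{(-1)}\otimes\mathcal L(\lambda)\,))
\]
after untwisting $St_1 = \underline{St_1}^{(1)}$ as a $G^{(1)}$-module — here one uses that $St_1$ is a Frobenius twist as a $G$-module is \emph{not} true, so instead one keeps $St_1$ outside and uses that $F$ is finite and affine so $R^jF_*F^*\mathcal L(\lambda)=0$ for $j>0$ and $F_*\mathcal O_{G/B}\otimes\mathcal L(\lambda)$ has the relevant cohomology. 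The cleanest route: $F\colon G/B\to G^{(1)}/B^{(1)}$ is a finite morphism, so the Leray spectral sequence degenerates and $H^i(G/B, F^*\mathcal M\otimes St_1) \simeq H^i(G^{(1)}/B^{(1)},\ \mathcal M\otimes F_*(St_1\otimes\mathcal O))$; one then needs $F_*(\mathcal O_{G/B})\otimes St_1$ — or rather the right twisted version — to contribute $St_1$ tensored with the Frobenius twist, which is where the Steinberg module's special role (its restriction to $G_1$ is the regular representation of the Frobenius kernel, up to twist) enters decisively.

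The main obstacle, and the step I would spend the most care on, is precisely this last point: showing that pushing forward along the flag‑variety Frobenius and tensoring with the Steinberg module produces exactly $St_1\otimes(-)^{(1)}$ with no extra terms and no higher direct images. This rests on the fact that $St_1$ is projective and injective as a $G_1$-module (equivalently, free of rank one over the Frobenius kernel up to the twist by $(p-1)\rho$), which forces a clean Künneth/Hochschild–Serre type splitting $H^i(G/B,\mathcal L(p\cdot\lambda)) = \big(H^0(G_1, St_1\otimes k_{p\cdot\lambda})\big)\otimes H^i(G^{(1)}/B^{(1)},\mathcal L(\lambda))$ with the $G_1$-invariants being one‑dimensional of the right weight. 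Verifying the absence of higher $G_1$-cohomology and pinning down the twist is the delicate part; everything else (the reduction to $n=1$, the degeneration of Leray for the finite morphism $F$, the projection formula) is formal.
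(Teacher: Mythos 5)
Your overall route---push the computation down along the relative Frobenius $F\colon G/B\to G^{(1)}/B^{(1)}$ (equivalently, induct through $G_nB$), use the degeneration of Leray for this affine morphism, the projection formula, and the special role of the Steinberg module---is the same strategy as the paper's proof, and your reduction to $n=1$ via the Steinberg tensor product theorem is a harmless variant (the paper treats all $n$ at once). But the identity you hang everything on is false as stated, and the argument does not close. The claimed $G$-equivariant isomorphism $\mathcal L(p\cdot\lambda)\simeq St_1\otimes F^*\mathcal L(\lambda)$ cannot hold: the left-hand side is a line bundle while the right-hand side is a vector bundle of rank $\dim St_1=p^N$. Moreover, if one nevertheless feeds it into the projection formula it would give $H^i(p\cdot\lambda)\simeq St_1\otimes H^i(p\lambda)$, not $St_1\otimes H^i(\lambda)^{(n)}$, and $H^i(p\lambda)$ differs from $H^i(\lambda)^{(1)}$ in general (Corollary \ref{Frob-inj} gives only an injection between them). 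What is true, and what the theorem really rests on, is an identity one level down, for the direct image: $F_*\mathcal L((p-1)\rho)\simeq St_1\otimes\mathcal O_{G^{(1)}/B^{(1)}}$ as $G$-sheaves, which in the paper's language is the $G_1B$-isomorphism $H^0(G_1B/B,(p-1)\rho)\simeq {St_1}_{|_{G_1B}}$ of (\ref{ind and Steinberg}), used together with (\ref{ind to G_nB}) to absorb the twist $p\lambda$.

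Granting that pushforward identity, the rest is exactly the formal part you describe: the projection formula (tensor identity) gives $F_*\mathcal L(p\cdot\lambda)\simeq St_1\otimes\mathcal L(\lambda)$ on $G^{(1)}/B^{(1)}$; since $F$ is affine there are no higher direct images (this is (\ref{higher vanishing})), so $H^i(p\cdot\lambda)\simeq St_1\otimes H^i\bigl(G^{(1)}/B^{(1)},\mathcal L(\lambda)\bigr)\simeq St_1\otimes H^i(\lambda)^{(1)}$ by (\ref{cohomology of twist}). You correctly sense that this Steinberg step is the ``main obstacle,'' but the routes you sketch for it do not establish it: computing $F_*(\mathcal O_{G/B})\otimes St_1$ is not the relevant object (one needs $F_*\mathcal L((p-1)\rho)$, and $F_*\mathcal O_{G/B}$ has rank $p^N$ with cohomology you cannot control this way), and the proposed K\"unneth factor $H^0(G_1,St_1\otimes k_{p\cdot\lambda})$ is not even well posed, since $p\cdot\lambda$ is only a $B$-character and the correct statement concerns $\Ind_B^{G_1B}$, not $G_1$-invariants. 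So the missing ingredient is precisely the isomorphism $\Ind_B^{G_1B}((p-1)\rho)\simeq {St_1}_{|_{G_1B}}$ (proved, e.g., from injectivity of $St_1$ as a $G_1$-module plus Frobenius reciprocity and a highest-weight comparison); once that is supplied, your plan goes through and coincides with the published argument.
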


Note that here we have stated this theorem only for line bundles. It actually holds for all vector bundles on $G/B$ (and is stated and proved in this generality in \cite{An80b}). Let us point out that the proof follows easily from the following standard facts (about induction to and from the group scheme $G_nB$, respectively about the Steinberg modules).

\begin{proof}
Let $E$ be a $B$-module. Then clearly $E^{(n)}$ is a $G_nB$-module and we have
\begin{equation} \label{higher vanishing} H^i(G_nB/B, E) = 0 \text { for all } i > 0,
\end{equation} 
and 
\begin{equation} \label{cohomology of twist}  H^i(E)^{(n)} \simeq H^i(G/G_nB, E^{(n)})  \text {  for all } i\geq 0.
\end{equation}
Let $ \lambda \in X$ and write as before $\lambda = \lambda^0 + p^n\lambda^1$ with $\lambda^0 \in X_n$ and $\lambda^1 \in X$. Then we have an isomorphism of $G_nB$-modules 
\begin{equation} \label{ind to G_nB} H^0(G_nB/B, \lambda) \simeq H^0(G_nB/B, \lambda^0) \otimes p^n\lambda^1.
\end{equation}
Moreover, in the special case $\lambda = (p^n -1) \rho$ we have a $G_nB$-isomorphism
\begin{equation} \label{ind and Steinberg}  H^0(G_nB/B, (p^n-1)\rho) \simeq {St_n}_{|_{G_nB}}.
\end{equation}

\vskip .2 cm

To obtain the Frobenius-Steinberg theorem (in the form stated above) from these facts we first use (\ref{higher vanishing}) to see that 
$$ H^i(p^n \cdot \lambda) = H^i( (p^n-1)\rho + p^n\lambda) \simeq H^i(G/G_nB, H^0(G_nB/B, (p^n-1)\rho + p^n\lambda)).$$
Here according to (\ref{ind to G_nB}) and (\ref{ind and Steinberg}) the term $H^0(G_nB/B, (p^n-1)\rho + p^n\lambda)$ is isomorphic as $G_nB$-module to $ St_n \otimes p^n\lambda$. So using this and the (generalized) tensor identity  \cite{RAG}, Proposition I.3.6 we get
$$  H^i(G/G_nB, H^0(G_nB/B, (p^n-1)\rho + p^n\lambda)) \simeq St_n \otimes H^i(G/G_nB, p^n\lambda).$$
We conclude by applying (\ref{cohomology of twist}).
\end{proof}

The first and foremost consequence of Theorem \ref{F-S} is the Kempf vanishing theorem for the higher cohomology of dominant line bundles, see Remark \ref{rem on Kempf} (2). However, the result certainly has important applications also for non-dominant line bundles, including the vanishing behavior of their cohomology. 
 The most evident application is the following

\begin{cor} \label{Dot}  Let $i, n \geq 0$ and $\lambda \in X$. Then $\lambda \in X_p(i)$ if and only if $p^n \cdot \lambda \in D_p(i)$. 
\end{cor}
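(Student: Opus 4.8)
The displayed equivalence is the most immediate application of the Frobenius--Steinberg theorem. Note first that the symbol $X_p(i)$ on the left-hand side is to be read as the set $D_p(i) = \{\mu \in X \mid H^i(\mu) \neq 0\}$ introduced above, so the claim reads: $H^i(\lambda) \neq 0$ precisely when $H^i(p^n \cdot \lambda) \neq 0$. The plan is simply to invoke Theorem \ref{F-S} for the given $i, n, \lambda$, which yields the $G$-module isomorphism
$$ H^i(p^n \cdot \lambda) \simeq St_n \otimes H^i(\lambda)^{(n)}, $$
and then to observe that the right-hand side is zero exactly when $H^i(\lambda)$ is zero.

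For that observation I would argue as follows. The Frobenius twist $V \mapsto V^{(n)}$ leaves the underlying vector space of a $G$-module unchanged (it alters only the $G$-action), so $H^i(\lambda)^{(n)} = 0$ if and only if $H^i(\lambda) = 0$. Moreover $St_n = L((p^n-1)\rho)$ is a nonzero finite-dimensional $G$-module, so over the field $k$ the tensor product $St_n \otimes H^i(\lambda)^{(n)}$ is nonzero as soon as $H^i(\lambda)^{(n)}$ is nonzero; equivalently, if $\mu$ is a weight of $St_n$ and $\nu$ a weight of $H^i(\lambda)^{(n)}$ then $\mu + \nu$ is a weight of the tensor product. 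Chaining these, $H^i(p^n \cdot \lambda) \neq 0 \iff H^i(\lambda)^{(n)} \neq 0 \iff H^i(\lambda) \neq 0$, and reading the two directions of this chain gives both implications of the corollary, namely $\lambda \in D_p(i) \iff p^n \cdot \lambda \in D_p(i)$.

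There is essentially no obstacle here: all the content sits in Theorem \ref{F-S}, and what remains is only the elementary fact that tensoring with a fixed nonzero module and applying a Frobenius twist each preserve the property of a module being nonzero. Under the standing hypothesis $p > 0$ of this section the degenerate case $p = 0$ does not arise; were it admitted in the sense of the definition of $D_p(i)$, the only relevant value is $n = 0$, for which $p^n \cdot \lambda = \lambda$ and the statement is a tautology.
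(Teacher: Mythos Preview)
Your proof is correct and matches the paper's approach: the paper presents this corollary as ``the most evident application'' of Theorem~\ref{F-S} without spelling out an argument, and what you have written is exactly the intended one-line deduction (together with the correct observation that $X_p(i)$ is a typo for $D_p(i)$).
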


With a little more effort we can prove, see \cite{An80b}, Proposition 3.3 (using the same notation as in Corollary \ref{Dot})
\begin{equation}  \label{stable-} \lambda \in D_p(i) \text{  if and only if  } p^n \cdot \lambda - X_n \subset D_p(i). \end{equation}

The corollary says that $D_p(i)$ is stable under dot multiplication by $p^n$.  As $(p^n -1)\rho \in X_n$ we see from (\ref{stable-}) that $D_p(i)$ is also stable under usual multiplication by $p^n$. This last result is also a consequence of the following result.

\begin{cor} \label{Frob-inj} (\cite{An80b}, Corollary 2.7)
Let $\lambda \in X$. Then the Frobenius homomorphism on $G$ induces injective homomorphisms $H^i(\lambda)^{(n)} \rightarrow H^i(p^n \lambda)$ for all $i, n \geq 0$.
\end{cor}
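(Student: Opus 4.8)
The plan is to deduce the corollary directly from the Frobenius–Steinberg theorem (Theorem \ref{F-S}) together with elementary properties of the Steinberg module. First I would recall that $\lambda = (\lambda + \rho) - \rho$, so that $p^n\lambda = p^n(\lambda+\rho) - p^n\rho = \big(p^n(\lambda+\rho) - \rho\big) - (p^n-1)\rho = p^n\cdot\lambda - (p^n-1)\rho$. Thus the line bundle $\mathcal L(p^n\lambda)$ differs from $\mathcal L(p^n\cdot\lambda)$ precisely by the character $-(p^n-1)\rho$. Tensoring the $G_nB$-level statements by $-(p^n-1)\rho$ and running the same argument as in the proof of Theorem \ref{F-S}, one obtains a $G$-module isomorphism
\begin{equation*}
H^i(p^n\lambda) \simeq H^i\big(G/G_nB,\ H^0(G_nB/B,(p^n-1)\rho) \otimes (p^n\lambda - (p^n-1)\rho)\big) \simeq St_n \otimes H^i(\lambda)^{(n)} \otimes (-(p^n-1)\rho)^{(\text{as }G\text{-mod})},
\end{equation*}
but it is cleaner simply to say: by Theorem \ref{F-S}, $H^i(p^n\cdot\lambda) \simeq St_n \otimes H^i(\lambda)^{(n)}$, and $H^i(p^n\lambda)$ is the tensor product of this with the one-dimensional adjustment; at the $G$-module level the point is that $St_n = L((p^n-1)\rho)$ and multiplying a dominant line bundle weight shifts things only by a character.

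The cleanest route, which I would actually write out, is this. Since $St_n$ is irreducible with highest weight $(p^n-1)\rho$, its lowest weight is $w_0(p^n-1)\rho = -(p^n-1)\rho$, occurring with multiplicity one; hence there is a $B$-module (indeed $G$-module) embedding of the trivial module into $St_n \otimes St_n^* $, equivalently a $B$-embedding $k \hookrightarrow St_n$ sending $1$ to a lowest weight vector, which after a Frobenius-twist untwist gives a $G_nB$-module map $(p^n\lambda - (p^n-1)\rho) \hookrightarrow H^0(G_nB/B,(p^n-1)\rho)\otimes(p^n\lambda-(p^n-1)\rho)$ whose cokernel has all $G_nB$-composition factors of strictly smaller $B$-weight in the appropriate sense, but crucially the map is split on the relevant weight space. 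Applying $H^i(G/G_nB,-)$, using (\ref{cohomology of twist}) for the left side and the tensor identity plus (\ref{ind and Steinberg}) for the right side, produces the desired map $H^i(\lambda)^{(n)} \to H^i(p^n\lambda)$. Injectivity then follows because the composite of this map with a suitable $G$-map back (coming from the projection $St_n \to k$ onto the highest-or-lowest weight line, which exists as $k$ is a quotient of $St_n$ — no, $k$ is not a quotient of $St_n$ in general; instead one uses that $H^i(\lambda)^{(n)}$ sits as a direct summand of $St_n\otimes H^i(\lambda)^{(n)}$ after restricting attention to the right weight component) recovers the identity up to a nonzero scalar.

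The honest statement of the main obstacle: the subtle point is that $k$ is generally \emph{not} a direct summand of $St_n$, so one cannot split off $H^i(\lambda)^{(n)}$ from $St_n\otimes H^i(\lambda)^{(n)}$ by a naive projection. The correct fix — and the step I expect to require the most care — is to exhibit the map $H^i(\lambda)^{(n)} \to H^i(p^n\lambda)$ explicitly via a $B$-module map $k \to St_n$ landing in the $-(p^n-1)\rho$ weight line (dual to the highest weight line), then argue injectivity by restricting to $G_n B$ and using that on the $p^n\lambda - (p^n-1)\rho$-isotypic part of the $G_nB$-module $H^0(G_nB/B,(p^n-1)\rho)\otimes(p^n\lambda-(p^n-1)\rho)$ this inclusion is a split monomorphism of $G_nB$-modules; splitness is inherited by $H^i(G/G_nB,-)$, forcing the induced map on cohomology to be injective. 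Equivalently, and perhaps most efficiently, one invokes (\ref{hom}) and (\ref{stable-}) from the text together with Theorem \ref{F-S} to identify the image and conclude; but the weight-space/splitting argument at the $G_nB$-level is the robust mechanism, and verifying that the cokernel of $k \hookrightarrow St_n$ contributes nothing in the relevant degree is the one place where the properties of Steinberg modules (self-duality, $St_n = H^0((p^n-1)\rho)$, and $G_n$-projectivity) genuinely get used.
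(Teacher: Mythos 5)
Your proposal does not work as written, and the two places where it breaks are not cosmetic. First, the displayed ``isomorphism'' $H^i(p^n\lambda)\simeq St_n\otimes H^i(\lambda)^{(n)}\otimes(-(p^n-1)\rho)$ is false: already for $\lambda=0$, $i=0$ the left side is $k$ while the right side has dimension $\dim St_n=p^{nN}$. The underlying idea that $\mathcal L(p^n\lambda)$ and $\mathcal L(p^n\cdot\lambda)$ ``differ only by a character'' and hence have essentially the same cohomology is a misconception -- twisting a line bundle by a character changes the cohomology completely, and $-(p^n-1)\rho$ is not a character of $G$, so it cannot be tensored onto a $G$-module at all. Relatedly, the expression $H^0(G_nB/B,(p^n-1)\rho)\otimes(p^n\lambda-(p^n-1)\rho)$ is not $H^0(G_nB/B,p^n\lambda)$: the identity (\ref{ind to G_nB}) only allows twists by characters in $p^nX$. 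Second, the map you want to feed into $H^i(G/G_nB,-)$ does not exist: there is no $G_nB$-module embedding of the character $p^n\lambda$ (or $p^n\lambda-(p^n-1)\rho$) into $St_n\otimes p^n\lambda$, because $St_n$ has no $G_n$-invariants ($St_n$ is a nontrivial irreducible $G_n$-module). What exists is only a $B$-module map onto an extreme weight line, and a $B$-map neither induces anything under $H^i(G/G_nB,-)$ nor is split, so the ``splitness is inherited'' step has nothing to apply to. Your closing appeal to (\ref{hom}) and (\ref{stable-}) is not an argument.

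For the record (the paper itself only cites \cite{An80b}, Cor.~2.7, and gives no proof): the map in question is the one induced by the unit $p^n\lambda\to H^0(G_nB/B,p^n\lambda)\simeq H^0(G_nB/B,0)\otimes p^n\lambda$, after the identifications (\ref{higher vanishing}) and (\ref{cohomology of twist}); the middle term involves $H^0(G_nB/B,0)$, not $St_n$, which is exactly why the statement is not a formal consequence of Theorem \ref{F-S}. The Steinberg module enters as follows: tensor the unit with $St_n$ and compose with the map induced by the $B$-module projection $St_n=H^0((p^n-1)\rho)\to(p^n-1)\rho$, which lands in $\Ind_B^{G_nB}(p^n\cdot\lambda)\simeq St_n\otimes p^n\lambda$ by (\ref{ind to G_nB}) and (\ref{ind and Steinberg}). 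The resulting $G_nB$-endomorphism of $St_n\otimes p^n\lambda$ is nonzero (check it on a suitable weight vector, using adjunction), and since $\End_{G_nB}(St_n\otimes p^n\lambda)\simeq\End_{G_n}(St_n)^B=k$ it is a nonzero scalar; applying $H^i(G/G_nB,-)$ then exhibits a one-sided inverse, up to scalar, of $\mathrm{id}_{St_n}\otimes\phi$, where $\phi\colon H^i(\lambda)^{(n)}\to H^i(p^n\lambda)$ is the Frobenius map, and injectivity of $\phi$ follows. Your instinct that $St_n$, its self-duality/irreducibility over $G_n$, and Theorem \ref{F-S} are the right ingredients is sound, but the mechanism must be a one-sided inverse after tensoring with $St_n$, not a direct-summand or weight-space splitting, and you must also verify that the map you construct is the one induced by Frobenius, which your sketch never addresses.
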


\begin{rem} This corollary holds more generally for all vector bundles on $G/B$. It was conjectured by Cline, Parshall and Scott. I must admit that I didn't believe their conjecture when CPS mentioned it to me at an Oberwolfach meeting in April 1979, but my efforts of finding a counterexample resulted in the discovery of the Frobenius-Steinberg theorem above. Thus
 I ended up proving the conjecture instead!
\end{rem}

Combining Corollary \ref{Frob-inj} with Serre duality gives the analogous statement:
\begin{equation} \label{stable+} \lambda \in D_p(i) \text { if and only if } p^n \cdot \lambda + X_n \subset D_p(i). \end{equation}

Now we note that by semi-continuity (alternatively use the universal coefficient theorem, Theorem \ref{UC}) we have inclusions
$$ D_0(i) \subset D_p(i) \text { for all } i \geq 0.$$
By  Bott's theorem we know
$$D_0(i) = \bigcup_{w\in W,  \ell(w) = i} w \cdot X^+.$$

Combining these two facts with (\ref{stable-}) and (\ref{stable+}) we obtain the following result.

\begin{prop} \label{vanishing behavior} For all $i \geq 0$ we have 
$$ \bigcup_{n\geq 0,\; \ell(w) =i} (p^n\cdot w \cdot X^+ \pm X_n) \subset D_p(i).$$
\end{prop}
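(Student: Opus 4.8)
The plan is to combine the three ingredients already assembled just before the statement: Bott's description $D_0(i) = \bigcup_{\ell(w)=i} w\cdot X^+$, the inclusion $D_0(i) \subset D_p(i)$ coming from semi-continuity (or the universal coefficient theorem), and the two ``stability'' statements \eqref{stable-} and \eqref{stable+} which say that $D_p(i)$ is closed under the operations $\lambda \mapsto p^n\cdot\lambda - X_n$ and $\lambda \mapsto p^n\cdot\lambda + X_n$. Nothing deeper than bookkeeping should be required.

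First I would fix $i \geq 0$, $n \geq 0$, and $w \in W$ with $\ell(w) = i$, and take any $\mu \in w\cdot X^+$. By Bott's theorem $\mu \in D_0(i)$, and by the semi-continuity inclusion $\mu \in D_p(i)$. Now apply \eqref{stable-}: since $\mu \in D_p(i)$, the ``if'' direction of that biconditional gives $p^n\cdot\mu - X_n \subset D_p(i)$. Symmetrically, \eqref{stable+} gives $p^n\cdot\mu + X_n \subset D_p(i)$. Taking the union over all such $\mu \in w\cdot X^+$ yields $p^n\cdot w\cdot X^+ \pm X_n \subset D_p(i)$, and then taking the union over all $n \geq 0$ and all $w$ with $\ell(w) = i$ gives exactly the claimed containment
$$ \bigcup_{n\geq 0,\; \ell(w)=i} (p^n\cdot w\cdot X^+ \pm X_n) \subset D_p(i).$$

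There is really no serious obstacle here; the proposition is a formal consequence of facts established earlier in the section. The only point that deserves a moment's care is the interpretation of the notation $p^n\cdot w\cdot X^+ \pm X_n$: one must read it as the union of the two sets $\{p^n\cdot\mu - \nu : \mu \in w\cdot X^+,\ \nu \in X_n\}$ and $\{p^n\cdot\mu + \nu : \mu \in w\cdot X^+,\ \nu \in X_n\}$, so that invoking \eqref{stable-} for the minus part and \eqref{stable+} for the plus part covers both. One should also note in passing that \eqref{stable-} and \eqref{stable+} were themselves derived (via Corollary \ref{Frob-inj} and Serre duality) ultimately from the Frobenius--Steinberg theorem, so the proposition is in effect a corollary of Theorem \ref{F-S} together with Bott's theorem. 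I would keep the write-up to two or three sentences.
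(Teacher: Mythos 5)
Your proposal is correct and is exactly the paper's argument: the proposition is stated there as an immediate consequence of combining Bott's description of $D_0(i)$, the semi-continuity inclusion $D_0(i)\subset D_p(i)$, and the two stability statements (\ref{stable-}) and (\ref{stable+}), which is precisely your bookkeeping. Your reading of the $\pm$ notation as the union of the two translated sets also matches the paper's intended meaning, so nothing is missing.
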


\begin{rem} 
\begin{enumerate}
\item It is well known that $D_p(0) = X^+$ for all $p \geq 0$. By Serre duality we then also have $D_p(N) = X^-$ where the set of antidominant weights $X^-$ is defined by $X^- = \{\lambda \in X \mid -\lambda -2\rho \in X^+\} = w_0 \cdot X^+$. It follows that the inequality in Proposition \ref{vanishing behavior} is an equality when $i = 0$ and $i = N$.
\item It will follow from the results in the next section that we have equality in  Proposition \ref{vanishing behavior} also for $i = 1$ and $i=N-1$. However, the results in \cite{An81} and \cite{AKa} show that equality does not hold in general. In fact, we expect it to fail for all $1 < i <N-1$ (by the computations in \cite{An81}, Section 5, and \cite{AKa}, it does so for type $B_2$ and $G_2$). Nevertheless,
 Proposition \ref{vanishing behavior} contains the best approximation to the (non-)vanishing behavior of the cohomology  of line bundles known up to this date.
\end{enumerate}
\end{rem}

\subsection{Non-vanishing of the first cohomology module}
We shall now describe those $\lambda \in X$ for which $H^1(\lambda) \neq 0$. In other words we determine the set $D_p(1)$ from Section 4.2. This result was obtained in \cite{An79b}, Theorem 3.6.a . It thus predates the Frobenius-Steinberg theorem above. Its proof is based on a detailed examination of the $B$-module structure of $H^1(P_\alpha/B, \mathcal L (\lambda))$, where $\alpha$ is a simple root and $P_\alpha$ is the minimal parabolic subgroup containing $B$ corresponding to $\alpha$. The proof also gives information about the $G$-module structures of the non-zero $H^1(\lambda)$, see Section 5.3.

 \begin{thm}\label{H1} Let $\lambda \in X$. Then $H^1(\lambda) \neq 0$ if and only if $\lambda \in \bigcup_{\alpha \in S, \; n\geq 0} (p^n \cdot s_\alpha \cdot X^+ - X_n) $.
 \end{thm}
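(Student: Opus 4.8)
The plan is to determine $D_p(1)$ by reducing everything to rank one via the minimal parabolic subgroups $P_\alpha$, and then bootstrapping with the Frobenius machinery already available. First I would recall the rank-one picture: for a simple root $\alpha$ and $\lambda \in X$ with $m = \langle \lambda, \alpha^\vee\rangle$, the $P_\alpha/B \cong \mathbb P^1$ cohomology is $H^0(P_\alpha/B, \mathcal L(\lambda)) \neq 0$ iff $m \geq 0$ and $H^1(P_\alpha/B, \mathcal L(\lambda)) \neq 0$ iff $m \leq -2$, i.e. iff $s_\alpha \cdot \lambda$ is dominant for $P_\alpha$. This gives the obvious inclusion $\supseteq$ in low degree: if $\lambda = s_\alpha \cdot \mu$ with $\mu \in X^+$, then using the $P_\alpha$-Leray (Grothendieck) spectral sequence for $B \subset P_\alpha \subset G$ together with Kempf vanishing $H^{>0}(\mu) = 0$ (Theorem \ref{Kempf}), one gets $H^1(\lambda) = H^0(G/P_\alpha, H^1(P_\alpha/B, \mathcal L(\lambda)))$, and the latter is nonzero because $H^1(P_\alpha/B, \mathcal L(\lambda))$ contains the line $\mu$ as a $B$-highest weight (a dominant weight for the Levi of $P_\alpha$), so induction up to $G$ is nonzero. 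The Frobenius–Steinberg theorem (Theorem \ref{F-S}) together with its corollaries (\ref{stable-}) and (\ref{stable+}) then propagates this to all $p^n \cdot s_\alpha \cdot X^+ - X_n$, giving $\bigcup_{\alpha, n}(p^n\cdot s_\alpha\cdot X^+ - X_n) \subseteq D_p(1)$; this is exactly the $i=1$ case of Proposition \ref{vanishing behavior} (with the minus sign — the plus sign does not appear for $i=1$, which is a point the proof must address).

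The harder direction is $\subseteq$: showing that if $\lambda$ is in no such set then $H^1(\lambda) = 0$. Here my strategy is an induction on the number of simple reflections needed to move $\lambda$ toward the dominant chamber, using the $P_\alpha$ spectral sequence in the other direction. For any simple $\alpha$, the Leray spectral sequence $H^s(G/P_\alpha, H^t(P_\alpha/B, \mathcal L(\lambda))) \Rightarrow H^{s+t}(\lambda)$ has only $t \in \{0,1\}$ contributing, so there is a four-term exact sequence
\begin{equation*}
0 \to H^1\bigl(G/P_\alpha, H^0(P_\alpha/B,\mathcal L(\lambda))\bigr) \to H^1(\lambda) \to H^0\bigl(G/P_\alpha, H^1(P_\alpha/B,\mathcal L(\lambda))\bigr) \to H^2\bigl(G/P_\alpha, H^0(P_\alpha/B, \mathcal L(\lambda))\bigr).
\end{equation*}
If $\langle\lambda,\alpha^\vee\rangle \geq 0$ then $H^1(P_\alpha/B,\mathcal L(\lambda)) = 0$, so $H^1(\lambda) = H^1(G/P_\alpha, H^0(P_\alpha/B,\mathcal L(\lambda)))$; the $B$-module $M = H^0(P_\alpha/B,\mathcal L(\lambda))$ has a known weight structure (weights $\lambda, \lambda - \alpha, \dots, s_\alpha\lambda$), and one filters it so that each layer is a character $\nu$ with the property that $H^1(\nu)\neq 0$ would force $\nu$ (hence $\lambda$) into one of the excluded sets — one needs the combinatorial fact that the intermediate weights $\lambda - j\alpha$ are "closer to dominant" and an inductive hypothesis handles them. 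If instead $\langle\lambda,\alpha^\vee\rangle = -1$, then $H^0(P_\alpha/B,\mathcal L(\lambda)) = 0 = H^1(P_\alpha/B,\mathcal L(\lambda))$ by the rank-one computation, so $H^1(\lambda) = 0$ outright. The remaining case $\langle\lambda,\alpha^\vee\rangle \leq -2$ is the one where $\lambda$ could legitimately be in $D_p(1)$, and there $s_\alpha\cdot\lambda$ has strictly larger value at $\alpha^\vee$, so an induction on $\sum_{\alpha\in S}\max(0, -\langle\lambda,\alpha^\vee\rangle - 1)$ or a similar height-type statistic closes the loop, modulo checking the $p$-adic bookkeeping via Corollary \ref{Dot} and (\ref{stable-}).

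The main obstacle, and the reason this is a real theorem rather than a formality, is the precise analysis of the $B$-module $H^1(P_\alpha/B, \mathcal L(\lambda))$ and its image under $H^0(G/P_\alpha, -)$ in the case $\langle\lambda,\alpha^\vee\rangle \leq -2$: one must show that although $H^1(P_\alpha/B,\mathcal L(\lambda))$ can have many weights, only those contributing to the global sections survive, and that the "correction term" $H^2(G/P_\alpha, H^0(P_\alpha/B, \mathcal L(\lambda)))$ on the right of the four-term sequence does not accidentally kill the $H^1$ we are trying to detect (or, conversely, does exactly the right killing). Equivalently, as done in \cite{An79b}, one has to understand the $B$-socle and the $B$-head of $H^1(P_\alpha/B, \mathcal L(\lambda))$ — i.e. identify which characters $\nu$ actually occur as sub- or quotient modules — because that is precisely what controls whether $\Ind_B^G$ (and its derived functors in low degree) of the layers is nonzero. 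This rank-one structural input, combined with the Frobenius contraction to reduce to $\lambda$ with all $\langle\lambda,\alpha^\vee\rangle$ small (using that $D_p(1)$ is stable under $p^n\cdot$ and that $p^n \cdot s_\alpha \cdot \mu - X_n$ exhausts the claimed set as $\mu$ ranges over $X^+$), is the crux; the spectral sequence and the inclusion $D_0(1) \subseteq D_p(1)$ are the easy scaffolding around it.
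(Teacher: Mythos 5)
Your outline follows the same skeleton as the paper's argument: the inclusion $\bigcup_{\alpha\in S,\,n\ge 0}(p^n\cdot s_\alpha\cdot X^+ - X_n)\subset D_p(1)$ is exactly the $i=1$ case of Proposition \ref{vanishing behavior} (with the observation that the $+X_n$ part is superfluous there), and for the converse the paper likewise uses Kempf vanishing (Theorem \ref{Kempf}) to produce a simple root $\alpha$ with $\langle\lambda,\alpha^\vee\rangle<0$, collapses the Leray spectral sequence to get $H^1(\lambda)\simeq H^0(G/P_\alpha, H^1_\alpha(\lambda))$, and then analyzes $H^1_\alpha(\lambda)$. Up to that reduction you and the paper agree.

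The genuine gap is in how you propose to finish the ``only if'' direction. Filtering the rank-one module by characters and inducting on $\sum_{\alpha}\max(0,-\langle\lambda,\alpha^\vee\rangle-1)$, ``modulo $p$-adic bookkeeping via Corollary \ref{Dot} and (\ref{stable-})'', cannot yield the theorem: a filtration argument only sees composition factors, i.e. the weights $\lambda+\alpha,\dots,s_\alpha\cdot\lambda$ of $H^1_\alpha(\lambda)$, and since induction to $G$ is only left exact this gives merely the necessary condition that one of these weights is dominant. That condition cuts out a strictly larger set than the one in the theorem: for $SL_3$ with fundamental weights $\omega_1,\omega_2$ and $\lambda=2\omega_1-4\omega_2$, the weight $\lambda+2\alpha_2=0$ is dominant, yet $\lambda$ lies in $\bigcup_{\alpha,n}(p^n\cdot s_\alpha\cdot X^+-X_n)$ only when $p=2$, so $H^1(\lambda)=0$ for every $p\ge 3$. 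What decides nonvanishing is the submodule structure: by Frobenius reciprocity $\Hom_G(L(\mu),H^1(\lambda))\simeq\Hom_{P_\alpha}(L(\mu),H^1_\alpha(\lambda))$, and (by the weight argument used in the paper's proof of the simple-socle theorem for $H^1$) this equals $\Hom_{P_\alpha}(L_\alpha(\mu),H^1_\alpha(\lambda))$; so one must determine for which $G$-dominant $\mu$ the simple $L_\alpha(\mu)$ occurs in the socle of the rank-one module $H^1_\alpha(\lambda)$, and it is this $SL_2$ socle computation, governed by the $p$-adic digits of $\langle\lambda+\rho,\alpha^\vee\rangle$, that produces exactly the sets $p^n\cdot s_\alpha\cdot X^+ - X_n$. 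This is the actual content of \cite{An79b}, Theorem 3.6; you correctly single it out as ``the crux'' but leave it undone, and it cannot be recovered from Corollary \ref{Dot} or (\ref{stable-}), which only give the stability statements behind the easy inclusion. The auxiliary induction also has nothing to run on: once $\alpha$ with $\langle\lambda,\alpha^\vee\rangle\le-2$ is chosen, the Leray identification expresses $H^1(\lambda)$ through $H^0$ of data attached to the weights $\lambda+j\alpha$, not through $H^1$ of weights closer to the dominant chamber, and the transfer step ``a layer $\lambda-j\alpha$ lies in the excluded union, hence so does $\lambda$'' is asserted but never proved.
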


\begin{rem} 
\begin{enumerate}
\item
The formulation of the non-vanishing criteria above is somewhat different from the way it is stated in \cite{An79b}. However, it is easy to check that it is equivalent. We have chosen the alternative formulation in order to stay close to the formulation in Section 4.2. The theorem says that for $i=1$ we have equality in Proposition  \ref{vanishing behavior} (note that in the $i = 1$ case we can replace the $\pm$ sign in this proposition with the minus sign only because $p^n \cdot s_\alpha \cdot \lambda + X_n \subset s_\alpha \cdot X^+$ for all $\lambda \in X^+$).
\item This theorem represents the first progress on describing the vanishing behavior of cohomology of non-dominant line bundles on $G/B$ for a general reductive algebraic group. The only known previous result on this question was Griffith's treatment of  the case $G = SL_3$,  \cite{Grif}. Except for Theorem \ref{H1}, the consequences of the Frobenius-Steinberg theorem presented in Section 4.2, and the computations in the rank $2$ cases, \cite{An81} and \cite{AKa}, the Problem 2.5 is still wide open for the cohomology modules $H^i(\lambda)$ with $1 < i < N-1$ and $\lambda \notin X^+ \cup X^-$.
\end{enumerate}
\end{rem}

The first ingredient in the proof of Theorem \ref{H1} is the Kempf vanishing theorem. It implies that if $\lambda \in D_p(1)$ then there exists a simple root $\alpha$ with $\langle\lambda, \alpha^\vee \rangle < 0$. This implies that the restriction of $\mathcal L(\lambda)$ to $P_\alpha/B \simeq \P^1$
has no cohomology in degrees different from $1$. Therefore, we get $H^1(\lambda) \simeq H^0(H^1(P_\alpha /B, \mathcal L(\lambda)))$. The proof now comes down to a close inspection of the $B$-module structure of $H^1(P_\alpha/B, \mathcal L(\lambda))$. For details we refer to the proof in \cite{An79b}.

Using Serre duality Theorem \ref{H1} is equivalent to
\begin{cor} 
Let $\lambda \in X$. Then $H^{N-1}(\lambda) \neq 0$ if and only if $\lambda \in \bigcup_{\alpha \in S, \; n\geq 0} (p^n \cdot s_\alpha \cdot X^- + X_n) $.
\end{cor}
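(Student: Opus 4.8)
The plan is to read off this statement from Theorem~\ref{H1} via Serre duality, the only content being the bookkeeping of weight sets. First I would set $i=N-1$ in (\ref{Serre}), getting $H^{N-1}(\lambda)^*\simeq H^{1}(-\lambda-2\rho)$; hence $H^{N-1}(\lambda)\neq 0$ exactly when $H^{1}(-\lambda-2\rho)\neq 0$, i.e. $D_p(N-1)=\sigma\bigl(D_p(1)\bigr)$ where $\sigma\colon X\to X$ is the involution $\sigma(\mu)=-\mu-2\rho$. So it suffices to apply $\sigma$ to the set $\bigcup_{\alpha\in S,\,n\geq 0}(p^n\cdot s_\alpha\cdot X^+-X_n)$ furnished by Theorem~\ref{H1}.

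To make the action of $\sigma$ transparent I would pass to the $\rho$-shifted coordinate $\hat\mu=\mu+\rho$. In these coordinates $\sigma$ is simply $\hat\mu\mapsto-\hat\mu$, while the operator $\mu\mapsto p^n\cdot s_\alpha\cdot\mu$ becomes the linear map $\hat\mu\mapsto p^n s_\alpha(\hat\mu)$; since negation commutes with every linear map, $\sigma$ commutes with each operator $\mu\mapsto p^n\cdot s_\alpha\cdot\mu$. Combined with the two elementary facts that $\sigma(X^+)=X^-$ (this is just the definition of $X^-$) and that $\sigma(A-\nu)=\sigma(A)+\nu$ for any $A\subseteq X$, $\nu\in X$, applying $\sigma$ term by term gives
\[ \sigma\Bigl(\bigcup_{\alpha,n}(p^n\cdot s_\alpha\cdot X^+-X_n)\Bigr)=\bigcup_{\alpha,n}\bigl((p^n\cdot s_\alpha\cdot X^-)+X_n\bigr)=\bigcup_{\alpha\in S,\,n\geq 0}(p^n\cdot s_\alpha\cdot X^-+X_n), \]
which is exactly the asserted description of $D_p(N-1)$.

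I do not expect a genuine obstacle here: the two substantive inputs --- the explicit form of $D_p(1)$ in Theorem~\ref{H1} and the Serre duality isomorphism (\ref{Serre}) --- are already available, and everything else is the routine verification that the displayed set identities hold. The single point worth a moment's care is the sign of the $X_n$-shift: because $X_n$ is not stable under $\mu\mapsto-\mu$, it genuinely matters that this shift occurs with a minus sign in Theorem~\ref{H1}, and that is precisely why it reappears with a plus sign after dualizing, in agreement with the statement to be proved. One could instead argue through the explicit formula $\sigma(\lambda)=w_0\cdot(-w_0\lambda)$ using that $-w_0$ is a diagram automorphism fixing $\rho$ and stabilizing $X^+$, $X_n$ and $S$, but the shifted-coordinate picture makes the needed commutations immediate.
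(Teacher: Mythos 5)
Your proposal is correct and is exactly the paper's argument: the corollary is stated there as an immediate consequence of Theorem \ref{H1} via Serre duality (\ref{Serre}), and your $\rho$-shifted bookkeeping just makes explicit the set identity $\sigma\bigl(p^n\cdot s_\alpha\cdot X^+ - X_n\bigr) = p^n\cdot s_\alpha\cdot X^- + X_n$ that the paper leaves to the reader.
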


Again this is equivalent to saying that for $i= N-1$ we have equality in Proposition \ref{vanishing behavior}.

\section{On the $G$-module structure. Problem 2.6}

Bott's theorem is a beautiful and complete description of all cohomology modules of line bundles on $G/B$. It gives a full account of both their vanishing behavior and of their G-structure. In a sense it is at the same time a disappointment: the higher cohomology modules for non-dominant line bundles present nothing new. In fact,  in characteristic zero any non-vanishing higher cohomology module is isomorphic to the (simple) $0$'th cohomology module of the corresponding $W$-dot-conjugated dominant line bundle.

In characteristic $p$ there is much more excitement. As we shall demonstrate in this section, it is for any $p > 0$  and $\mu \in X$ non-dominant a rare exception for $H^i(\mu)$ to be isomorphic to some $H^0(\lambda)$ with $\lambda$ dominant
. Moreover, when $1 < i < N-1$ it is not always true that $H^i(\mu)$ has simple socle or head. In fact, there are examples where it turns out that $H^i(\mu)$ is decomposable. In other words, just as Sections 4.2-3 revealed that the vanishing behavior is much more intricate in positive characteristics than in characteristic zero, so we shall see in this section is the situation about the $G$-module structures.

Despite this somewhat mysterious and to a large extent still unknown behavior of the $G$-modules $H^i(\mu)$, we shall start out by proving, that we can take good advantage of these higher cohomology modules to obtain not only some definite results about them,  but also in fact some general results in representation theory.

\subsection{The strong linkage principle}

Let $M$ be a finite dimensional $G$-module. For $\mu \in X^+$ we denote by $[M:L(\mu)]$ the composition factor multiplicity of $L(\mu)$ in $M$. Then we can state the strong linkage principle as follows.
\begin{thm} (The strong linkage principle, \cite{An80a}) \label{SLP}
Let $\lambda \in X$ and $\mu \in X^+$. If $[H^i(\lambda): L(\mu)] \neq 0$ for some $i \geq 0$, then $\mu \uparrow \lambda^+$.
\end{thm}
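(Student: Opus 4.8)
The plan is to reduce the statement, by the standard Frobenius-Steinberg and restricted-region arguments, to a statement about cohomology in a single alcove, and then to prove that statement by induction on $\ell(w)$ using minimal parabolic subgroups and the $\P^1$-fibrations $G/B \to G/P_\alpha$. First I would recall the partial order ``$\uparrow$'': $\mu \uparrow \lambda$ means there is a chain $\mu = \mu_0 \uparrow \mu_1 \uparrow \cdots \uparrow \mu_r = \lambda$ where each step $\mu_{j+1} = s_{\beta,mp}\cdot \mu_j$ for an affine reflection with $\mu_j \leq s_{\beta,mp}\cdot\mu_j$ (here $\beta \in R^+$, $m \in \Z$). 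The key is that ``$\uparrow$'' is compatible with the dot-action of the affine Weyl group $W_p$ in the sense we need. The essential input is that for a simple root $\alpha$ one understands $H^\bullet$ of line bundles on the $\P^1$-bundle $G/B \to G/P_\alpha$ completely: if $\langle \lambda,\alpha^\vee\rangle \geq 0$ then $R^i\pi_* \mathcal L(\lambda) = 0$ for $i>0$ and $R^0\pi_*\mathcal L(\lambda)$ is filtered by line bundles $\mathcal L(\lambda), \mathcal L(\lambda - \alpha), \dots, \mathcal L(s_\alpha\cdot\lambda)$; while if $\langle\lambda,\alpha^\vee\rangle = -1$ the pushforward vanishes in all degrees; and if $\langle\lambda,\alpha^\vee\rangle \leq -2$ then $R^0\pi_* = 0$ and $R^1\pi_*\mathcal L(\lambda)$ is filtered by $\mathcal L(s_\alpha\cdot\lambda), \dots, \mathcal L(\lambda+\alpha)$. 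All weights appearing in these filtrations are of the form $s_{\alpha,mp}\cdot\lambda$-type increments and hence are $\uparrow$-related in the correct direction to $\lambda$ or to $s_\alpha\cdot\lambda$.

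The main argument proceeds as follows. Using the Frobenius-Steinberg theorem (Theorem \ref{F-S}), $H^i(p^n\cdot\lambda) \simeq St_n \otimes H^i(\lambda)^{(n)}$, together with Steinberg's tensor product theorem and the fact that $[St_n \otimes V^{(n)} : L(\mu)] \neq 0$ forces $\mu = \mu_0 + p^n\mu_1$ with $\mu_0$ a restricted weight linked to $(p^n-1)\rho$ and $[V:L(\mu_1)]\neq 0$, one reduces the general statement to: composition factors $L(\mu)$ of $H^i(\lambda)$ with $\lambda$ in a bounded region satisfy $\mu \uparrow \lambda^+$. Concretely, after translating by a large multiple of $p^n$ we may assume $\lambda^+$ lies in the lowest $p^n$-alcove, so only finitely many $\mu$ are candidates; this is where the induction bites. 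Now I would argue by induction on $\ell(w)$ where $w$ is a shortest element with $w\cdot\lambda^+ \uparrow$-related to $\lambda$ appropriately — more precisely, on the minimal length of a sequence of dominant-to-$\lambda$ reflections. If $\lambda$ is dominant, $H^i(\lambda) = 0$ for $i>0$ (Kempf, Theorem \ref{Kempf}) and $H^0(\lambda)$ has highest weight $\lambda = \lambda^+$ with all weights $\leq \lambda$, and the strong linkage for $H^0(\lambda)$ is the classical statement (which itself follows from the Jantzen sum formula / the $\P^1$-analysis applied iteratively — I would either cite this or rederive it from the same fibration technique). If $\lambda$ is not dominant, pick a simple $\alpha$ with $\langle\lambda,\alpha^\vee\rangle < 0$; the Leray spectral sequence for $\pi: G/B \to G/P_\alpha$ degenerates enough (since $R^\bullet\pi_*\mathcal L(\lambda)$ is concentrated in one degree over each of the two cases, or controlled in the general negative case) to give a filtration of $H^i(\lambda)$ whose subquotients are among $H^{i-1}(G/P_\alpha, R^1\pi_*\mathcal L(\lambda))$ and $H^i(G/P_\alpha, R^0\pi_*\mathcal L(\lambda))$; pulling back along $G/P_\alpha$ and using that $R^j\pi_*\mathcal L(\lambda)$ is filtered by line bundles $\mathcal L(\nu)$ with $\nu$ strictly closer to dominant (in particular with $\nu^+ = \lambda^+$ and $\nu \uparrow \lambda^+$-chains of shorter length to the dominant chamber), the inductive hypothesis applies to each $H^\bullet(\nu)$, yielding $\mu \uparrow \nu^+ = \lambda^+$.

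I would organize the bookkeeping so that at each fibration step the weights $\nu$ occurring in $R^\bullet\pi_*\mathcal L(\lambda)$ satisfy $\nu \uparrow \lambda$ or $\nu \uparrow s_\alpha\cdot\lambda$, and then use transitivity of $\uparrow$ and the fact that $\lambda^+$ is the $W\cdot$-dominant representative, unchanged under passing from $\lambda$ to $s_\alpha\cdot\lambda$ (for $\alpha$ taking $\lambda$ closer to dominant). The \textbf{main obstacle} I anticipate is not any single step but making the induction well-founded: one must choose the right complexity measure on $\lambda$ so that the weights $\nu$ appearing in the filtration of $R^\bullet\pi_*\mathcal L(\lambda)$ genuinely have \emph{strictly smaller} measure, while simultaneously all of them remain in a region where the measure is defined and finite. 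The natural candidate — distance of $\lambda$ to the dominant chamber measured by $\sum_{\alpha}\max(0, -\langle\lambda,\alpha^\vee\rangle)$ or by the number of affine hyperplanes separating $\lambda$ from a fixed dominant alcove — does decrease when we replace $\lambda$ by a weight $\nu$ with $\langle\lambda,\alpha^\vee\rangle \leq \langle\nu,\alpha^\vee\rangle \leq -1$ and $\nu \equiv \lambda \pmod{\Z\alpha}$ with $\nu$ between $\lambda$ and $s_\alpha\cdot\lambda$, but checking this carefully (and handling the base case $H^0(\lambda)$, which requires the classical strong linkage for Weyl modules — provable from the $\P^1$-technique plus the Jantzen sum formula, or citable) is where the real work lies. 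A secondary technical point is the spectral-sequence degeneration: in the generic negative case $\langle\lambda,\alpha^\vee\rangle \leq -2$ one has $R^0\pi_* = 0$, so $H^i(\lambda) \simeq H^{i-1}(G/P_\alpha, R^1\pi_*\mathcal L(\lambda))$ outright, and when $\langle\lambda,\alpha^\vee\rangle = -1$ everything vanishes; only when $\langle\lambda,\alpha^\vee\rangle \geq 0$ (which does not occur here) would one need the full two-term filtration — so in fact for the strong linkage argument the fibration step is cleaner than in general, which I would exploit.
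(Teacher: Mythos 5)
Your plan founders at the step you treat as routine. When $\langle\lambda,\alpha^\vee\rangle\le -2$, the sheaf $R^1\pi_*\mathcal L(\lambda)$ is the bundle attached to the $P_\alpha$-module $H^1(P_\alpha/B,\lambda)$, and as a $B$-module this is filtered by \emph{all} the intermediate weights $\lambda+\alpha,\lambda+2\alpha,\dots,s_\alpha\cdot\lambda$, not only by weights of the form $s_{\alpha,mp}\cdot\lambda$; your assertion that the filtration weights are ``$s_{\alpha,mp}\cdot\lambda$-type increments'' is false, and so is the claim that these $\nu$ satisfy $\nu^+=\lambda^+$ — generically they lie in different $W_p\cdot$-orbits, and all one can say is $\nu^+\le\lambda^+$. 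Feeding them into your induction therefore yields only $\mu\uparrow\nu^+$ for some dominant $\nu^+\le\lambda^+$, which (since $\le$ does not imply $\uparrow$) amounts to the essentially trivial bound $\mu\le\lambda^+$, not strong linkage. The entire content of the theorem is the restriction to $p$-congruent weights, and in the paper this comes not from a dévissage of $R^1\pi_*$ into line bundles but from comparing $H^{i+1}(s_\alpha\cdot\lambda)$ with $H^i(\lambda)$ via the natural maps $c_\alpha^i(\lambda)$ and showing, through the characteristic-$p$ failure of Demazure's isomorphism $V_\alpha^\lambda\simeq H^0_\alpha(\lambda+\rho-\alpha)\otimes(-\rho)$ (a binomial-divisibility $SL_2$ computation), that the kernels and cokernels of these maps only involve the weights $s_\alpha\cdot\lambda+rp\alpha$ with $0<rp<\langle\lambda+\rho,\alpha^\vee\rangle$ — exactly the $\nu$ with $\nu^+\uparrow\lambda^+$ and $\nu^+<\lambda^+$.

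The global structure of your induction is also a gap. Your base case is the strong linkage principle for $H^0(\lambda)$ with $\lambda$ dominant, which you propose to cite (Jantzen's sum-formula proof, valid only for $p\ge h$) or to ``rederive'' — but that is precisely the hard case the theorem is meant to establish for all $p$, so the argument is circular or $p$-restricted; and the preliminary Frobenius--Steinberg reduction does not apply to an arbitrary $\lambda$ (only to weights of the form $p^n\cdot\lambda$) and is not needed, since only finitely many $\mu\le\lambda^+$ can occur anyway. The paper instead inducts on $\lambda^+\in X^+-\rho$ with respect to the strong linkage ordering itself and treats all cohomology degrees at once along the string $H^N(\lambda_N)\to\cdots\to H^0(\lambda_0)$ attached to a reduced expression of $w_0$: composition factors occurring in kernels or cokernels are disposed of by the induction hypothesis via the weight sets above, while any remaining composition factor must lie in the image of the composite $H^N(w_0\cdot\lambda)\to H^0(\lambda)$, which equals $L(\lambda)$ because $H^0(\lambda)$ has simple socle $L(\lambda)$ and, by Serre duality, $H^N(w_0\cdot\lambda)$ has simple head $L(\lambda)$. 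In this way the Weyl-module case comes \emph{out} of the induction (with minimal weights, where Bott-type behavior holds, as the base case) rather than going into it, and the proof works for all $p$ without Kempf vanishing.
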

Here $\lambda^+$ is the unique element in $W \cdot \lambda \cap (X^+-\rho)$. Moreover, we have used the notation $\mu \uparrow \lambda^+$ from Jantzen's book, \cite{RAG}, II.6 to mean that $\mu$ is strongly linked to $\lambda^+$, i.e. there exists a sequence $\mu = \mu_0 \leq \mu_1 \leq \cdots \leq \mu_r = \lambda^+$ in $X$ with $\mu_{j-1} = s_{\beta_j} \cdot \mu_j + n_j p \beta_j$ for some $\beta_j \in R^+$ and $n_j \in \Z, j=1, 2, \cdots r$.

\vskip .5 cm
Before we give the proof of this theorem  we shall say a little about its background. A linkage principle for modular representations of reductive algebraic groups was formulated by D.-n. Verma in \cite{Ve}. It says that two composition factors $L(\lambda)$ and $L(\mu)$ of an indecomposable module for $G$ always satisfy $\mu \in W_p \cdot \lambda$. Here $W_p$ denotes the affine Weyl group for $G$, i.e. the group generated by the affine reflections $s_{\beta, r}$ defined by
$$ s_{\beta,r} \cdot \nu = s_\beta \cdot \nu + rp \beta, \; \nu \in X,$$
where $\beta$ runs through all positive roots, and $r$ through all integers.

This linkage principle was proved by J.E. Humphreys for $p > h$, see \cite{Hu71}. V. Kac and B. Weisfeiler improved this result by showing that the linkage principle holds for all  $p$ not dividing the index of connection for $R$ , see  \cite{KW}. Also the work of R. Carter and G. Lusztig, \cite{CL}, implies the principle for type $A_n$ for all $p$. We refer to \cite{Hu76}, §3 for more details on the background of the linkage principle.

Actually, D.-n. Verma suggested in \cite{Ve}, Conjecture II p. 689, that a stronger principle should hold for Weyl modules. This was proved to be the case by Jantzen for $p \geq h$, first for type $A_n$ in \cite{Ja74}  and then for arbitrary types in \cite{Ja77}.  I named this principle (for all cohomology modules of line bundles on $G/B$) the strong linkage principle and proved it for all $p$ in \cite{An80a}. 

The linkage principle is a consequence of the strong linkage principle for Weyl modules, see Corollary \ref{LP} below.
\vskip .5 cm
J.E. Humphreys asked me some time in the late $1970$'s whether the strong linkage principle could be extended to the higher cohomology groups on line bundles. In  \cite{An79a} I had proved this for the cohomology modules of certain very special line bundles, namely those belonging to $p$-alcoves containing $-\rho$ in their closures (for such line bundles I proved in fact that Bott's theorem holds). In \cite{An79b} we proved, that whenever $H^1(\lambda) \neq 0$ it has a simple socle. This implies that at least  the (weak) linkage principle holds for the first cohomology modules.

In the Fall 1979 I tried then to extend the linkage principles (weak or strong)  to all the higher cohomology modules. The idea was to use various long exact sequences of such higher cohomology modules to deduce the principle from its validity for the $0$'th cohomology modules, i.e. the dual Weyl modules. Suddenly, I realized that not only would these sequences indeed give this extension, but by including the higher cohomology modules in the statement of the strong linkage principle, one could in fact give a short proof - valid for all $p$ and all cohomology - of the principle itself.

 We begin with an easy lemma. If $\alpha \in S $ we denote by  $P_\alpha$ the minimal parabolic subgroup in $G$ containing $B$ and having $\alpha$ as its unique positive root. If $\lambda \in X$ we write $H^i_\alpha (\lambda) = H^i(P_\alpha/B, \mathcal L(\lambda))$. 
\begin{lem} \label{homos} Let $ \lambda  \in X$,  $\alpha \in S$, and assume $\langle \lambda + \rho, \alpha^\vee \rangle \geq 0$.
 Then there is for each $i \geq 0$ a natural $G$-homomorphism
$$ c_\alpha^i (\lambda): H^{i+1}(s_\alpha \cdot \lambda) \rightarrow  H^i(\lambda).$$

\end{lem}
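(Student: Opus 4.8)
\emph{Strategy.} The plan is to push everything down to the $\P^1$-fibration $\pi\colon G/B\to G/P_\alpha$, i.e.\ to the factorization $\Ind_B^G=\Ind_{P_\alpha}^G\circ\Ind_B^{P_\alpha}$. Since $\dim(P_\alpha/B)=1$, we have $R^q\Ind_B^{P_\alpha}=0$ for $q\ge 2$, so for any $\mu\in X$ the Grothendieck (Leray) spectral sequence
\[
E_2^{p,q}=R^p\Ind_{P_\alpha}^G\!\bigl(H^q_\alpha(\mu)\bigr)\ \Longrightarrow\ H^{p+q}(\mu)
\]
is concentrated in the two rows $q=0,1$. First I would record the elementary but crucial consequence: if for a given $\mu$ only one of these rows is nonzero, say $H^q_\alpha(\mu)=0$ for all $q\ne q_0$, then the spectral sequence degenerates and yields natural $G$-isomorphisms $H^n(\mu)\cong R^{\,n-q_0}\Ind_{P_\alpha}^G\!\bigl(H^{q_0}_\alpha(\mu)\bigr)$ for all $n$. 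So the ingredients needed are: one such degeneration for $\mu=\lambda$ (in fibre-degree $q_0=0$), another for $\mu=s_\alpha\cdot\lambda$ (in fibre-degree $q_0=1$), and a single $P_\alpha$-homomorphism linking the two surviving fibre-cohomology modules.

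\emph{Using the hypothesis.} Put $m=\langle\lambda,\alpha^\vee\rangle$; the assumption $\langle\lambda+\rho,\alpha^\vee\rangle\ge0$ is exactly $m\ge-1$, and then $\langle s_\alpha\cdot\lambda,\alpha^\vee\rangle=-m-2\le-1$. On a fibre $P_\alpha/B\cong\P^1$ the bundle $\mathcal L(\mu)$ has degree $\langle\mu,\alpha^\vee\rangle$, so elementary $\P^1$-cohomology gives, when $m\ge0$, that $H^1_\alpha(\lambda)=0$ and $H^0_\alpha(s_\alpha\cdot\lambda)=0$. Feeding these into the previous paragraph yields natural $G$-isomorphisms, for every $i\ge0$,
\[
H^i(\lambda)\ \cong\ R^i\Ind_{P_\alpha}^G\!\bigl(H^0_\alpha(\lambda)\bigr),\qquad
H^{i+1}(s_\alpha\cdot\lambda)\ \cong\ R^i\Ind_{P_\alpha}^G\!\bigl(H^1_\alpha(s_\alpha\cdot\lambda)\bigr).
\]
If instead $m=-1$, then $\mathcal L(\lambda)|_{\P^1}\cong\mathcal O(-1)$ has no cohomology and $s_\alpha\cdot\lambda=\lambda$, so $H^j(\lambda)=H^j(s_\alpha\cdot\lambda)=0$ for all $j$; here one simply puts $c^i_\alpha(\lambda)=0$.

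\emph{The fibre map and conclusion.} It remains, for $m\ge0$, to construct a natural $P_\alpha$-homomorphism $\phi_\alpha(\lambda)\colon H^1_\alpha(s_\alpha\cdot\lambda)\to H^0_\alpha(\lambda)$; then
\[
c^i_\alpha(\lambda)\colon\ H^{i+1}(s_\alpha\cdot\lambda)\ \cong\ R^i\Ind_{P_\alpha}^G\!\bigl(H^1_\alpha(s_\alpha\cdot\lambda)\bigr)\ \xrightarrow{\ R^i\Ind_{P_\alpha}^G(\phi_\alpha(\lambda))\ }\ R^i\Ind_{P_\alpha}^G\!\bigl(H^0_\alpha(\lambda)\bigr)\ \cong\ H^i(\lambda)
\]
does the job. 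Both fibre modules are inflated from the Levi $L_\alpha$ (the unipotent radical of $P_\alpha$ acting trivially), with $H^0_\alpha(\lambda)$ the $L_\alpha$-induced (dual Weyl) module of highest weight $\lambda$. Applying relative Serre duality along $\pi$, with $\omega_{P_\alpha/B}\cong\mathcal L(-\alpha)$ and the identity $-s_\alpha\cdot\lambda-\alpha=-s_\alpha\lambda$ (a one-line check using $s_\alpha\rho=\rho-\alpha$), one finds $H^1_\alpha(s_\alpha\cdot\lambda)^*\cong H^0_\alpha(-s_\alpha\lambda)$, which identifies $H^1_\alpha(s_\alpha\cdot\lambda)$ with the $L_\alpha$-Weyl module of highest weight $\lambda$. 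Thus $\phi_\alpha(\lambda)$ may be taken to be the canonical homomorphism (nonzero, unique up to a scalar) from the Weyl module to the dual Weyl module of highest weight $\lambda$ for $L_\alpha$ — e.g.\ the composite of the projection onto and the inclusion of the common simple head/socle; in characteristic $0$, and more generally whenever $m<p$, it is an isomorphism, but not in general. Since $\pi$, the spectral sequence, Serre duality and $\phi_\alpha(\lambda)$ are all $G$-equivariant, so is $c^i_\alpha(\lambda)$.

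\emph{Main difficulty.} This is a soft lemma, so there is no serious obstacle; the points needing a little care are the bookkeeping ones — the vanishings $H^1_\alpha(\lambda)=0=H^0_\alpha(s_\alpha\cdot\lambda)$, the degeneration of the two-row spectral sequence, and above all the weight identity $-s_\alpha\cdot\lambda-\alpha=-s_\alpha\lambda$, which is precisely what guarantees that $H^1_\alpha(s_\alpha\cdot\lambda)$ and $H^0_\alpha(\lambda)$ really form a Weyl / dual-Weyl pair for $L_\alpha$. Once this is in place, naturality and $G$-equivariance come for free.
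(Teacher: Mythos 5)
Your proposal is correct and follows essentially the same route as the paper: reduce to the $\P^1$-fibration $G/B\to G/P_\alpha$, use the vanishing of $H^1_\alpha(\lambda)$ and $H^0_\alpha(s_\alpha\cdot\lambda)$ together with Serre duality on the fibre (the identity $-s_\alpha\cdot\lambda-\alpha=-s_\alpha\lambda$) to produce the unique $P_\alpha$-homomorphism $H^1_\alpha(s_\alpha\cdot\lambda)\to H^0_\alpha(\lambda)$, and then transport it via the degenerate Leray spectral sequence. Your explicit treatment of the case $\langle\lambda,\alpha^\vee\rangle=-1$ (where everything vanishes and $c_\alpha^i(\lambda)=0$) is a welcome added detail, but the argument is the paper's.
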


\begin{proof}
As $P_\alpha/B \simeq \P^1$ and $ \langle \lambda, \alpha^\vee \rangle \geq 0$ we have  $H^j_\alpha(\lambda) = 0$ for all $j > 0$.
 By Serre duality we have moreover $H^1_\alpha( s_\alpha \cdot \lambda) \simeq H^0_\alpha (- s_\alpha (\lambda))^*$, and $H^j(s_\alpha \cdot \lambda) = 0$ for all $j \neq 1$.  It is then a matter of easy $SL_2$-computations to see that we have (up to scalars) a unique  $P_\alpha$-homomorphism $H^1_\alpha( s_\alpha \cdot \lambda) \rightarrow H^0_\alpha (\lambda)$. Via the Leray spectral sequence coming from the $P_\alpha/B \simeq \P^1$-fibration $G/B \rightarrow G/P_\alpha$ we get that this homomorphism induces  corresponding  natural homomorphisms $H^{i+1} ( s_\alpha \cdot \lambda) \rightarrow H^i(\lambda)$ for all $i \geq 0$. 
\end{proof}
\begin{rem} \label{wall-vanishing} Let $ \langle \lambda, \alpha^\vee \rangle = -1$. The above proof shows in particular that  for such $\lambda$ we have 
 $ H^j (\lambda) = 0$  for all $ j$. In fact, we get more generally for any $P_\alpha$-module $V$ 
  that  $H^j(V \otimes \lambda) = 0$ for all $j$ (as the vector bundle on $P_\alpha/B$ induced by $V$ is trivial).
\end{rem}

We shall now need $2$ long exact sequences involving our cohomology modules. They arise from $4$ short exact sequences of $B$-modules. Assume $\lambda \in X$ and $\alpha \in S $ satisfy $\langle \lambda, \alpha^\vee \rangle \geq 0$. 
Then the first two $B$-sequences are (recall that if $\mu \in X$ we also denote by $\mu$ the $1$-dimensional $B$-module defined by this character)
\begin{equation} \label{seq1} 0 \rightarrow K_\alpha^\lambda \rightarrow H^0_\alpha (\lambda + \rho) \otimes (-\rho) \rightarrow \lambda \rightarrow 0, \end{equation} 
and
\begin{equation} \label{seq2} 0 \rightarrow s_\alpha \cdot \lambda \rightarrow K_\alpha^\lambda \rightarrow V_\alpha^\lambda \rightarrow 0. \end{equation}
These sequences are obtained by noticing that $\lambda$, respectively $s_\alpha \cdot \lambda$, is the highest, respectively lowest, weight of 
 $H^0_\alpha (\lambda + \rho) \otimes (-\rho)$. This gives the surjection onto $\lambda$ in (\ref{seq1}), and we then define $K_\alpha^\lambda$ to be the kernel of this surjection. Likewise we get the injection in (\ref{seq2}) and define $V_\alpha^\lambda$ to be the corresponding cokernel.

We now note that by Remark \ref{wall-vanishing} the middle term in (\ref{seq1}) has vanishing cohomology in all degrees. Hence we get $H^j(\lambda) \simeq H^{j+1}(K_\alpha^\lambda)$ for all $j$. Inserting this in the long exact sequence coming from (\ref{seq2}) we get the long exact $G$-sequence
\begin{equation} \label{lseq1} \cdots \rightarrow H^{j+1}(s_\alpha \cdot \lambda) \rightarrow H^j(\lambda) \rightarrow H^{j+1}(V_\alpha^\lambda) \rightarrow \cdots \end{equation}
In this sequence the maps $H^{j+1}(s_\alpha \cdot \lambda) \rightarrow H^j(\lambda)$ are up to non-zero scalars equal to the homomorphisms $c_\alpha^j(\lambda)$ from Lemma \ref{homos}. 

To further explore the terms $ H^{j+1}(V_\alpha^\lambda) $ in the above sequence we shall need the following $2$ short exact $B$-sequences
\begin{equation} \label{seq3} 0 \rightarrow C_\alpha^\lambda \rightarrow V_\alpha^\lambda \rightarrow I_\alpha^\lambda \rightarrow 0, \end{equation} 
and
\begin{equation} \label{seq4} 0 \rightarrow I_\alpha^\lambda \rightarrow H^0_\alpha (\lambda+\rho - \alpha) \otimes (-\rho) \rightarrow Q_\alpha^\lambda \rightarrow 0. \end{equation}
These $2$ sequences come from the existence of a $B$-homomorphism (an easy $SL_2$-computation) $V_\alpha^\lambda \rightarrow  H^0_\alpha (\lambda+\rho - \alpha) \otimes (-\rho)$ by setting $C_\alpha^\lambda$, respectively $I_\alpha^\lambda$, respectively $Q_\alpha^\lambda$ equal to the kernel, respectively image, respectively cokernel, of this map.

\begin{rem} In characteristic $0$ we have  $V_\alpha^\lambda \simeq  H^0_\alpha (\lambda+\rho - \alpha) \otimes (-\rho)$. This is a key observation in \cite{D76} leading Demazure to his very easy proof of Bott's theorem. Its characteristic $p$ analogue plays likewise an important rôle in our proof here.
\end{rem}
The middle term in the sequence (\ref{seq4}) has vanishing cohomology in all degrees (again by Remark \ref{wall-vanishing}). Arguing as above we then obtain the long exact $G$-sequence
\begin{equation} \label{lseq2}
\cdots \rightarrow H^{j+1}(C_\alpha^\lambda) \rightarrow  H^{j+1}(V_\alpha^\lambda) \rightarrow H^{j}(Q_\alpha^\lambda) \rightarrow \cdots.
\end{equation}

Inspecting the homomorphism $V_\alpha^\lambda \rightarrow  H^0_\alpha (\lambda+\rho - \alpha) \otimes (-\rho)$ a bit closer we see that the set of weights of both $C_\alpha^\lambda$ and $
Q_\alpha^\lambda$ is 
$$X_\alpha^\lambda = \{s_\alpha \cdot \lambda + rp\alpha \mid 0 < rp <  \langle \lambda + \rho, \alpha^\vee \rangle \}.$$ This implies 
\begin{prop} \label{ker/cok} Let $\lambda$ and $\alpha$ be as above and pick $\mu \in X^+$. 
\begin{enumerate} \item If $L(\mu)$ is a composition factor of either the kernel or the cokernel of $c_\alpha^i(\lambda)$  for some $i$, then $L(\mu)$ is a composition factor of $H^j(\nu)$ for some $j \geq 0$ and $\nu \in X_\alpha^\lambda$.
\item If $\nu \in X_\alpha^\lambda $ then $\nu^+ \uparrow \lambda^+$ and $\nu^+ < \lambda^+$.
\end{enumerate}
\end{prop}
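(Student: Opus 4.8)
The plan is to chase $G$-composition factors through the long exact sequences \eqref{lseq1} and \eqref{lseq2}, reducing both parts to the two ``error terms'' $C_\alpha^\lambda$ and $Q_\alpha^\lambda$ whose weight sets have just been identified as $X_\alpha^\lambda$. For (1) the only tool is the evident fact that for a short exact sequence $0\to V'\to V\to V''\to 0$ of $G$-modules every composition factor of $V$ occurs in $V'$ or in $V''$, and that this propagates through finite filtrations. Reading \eqref{lseq1} at the map $c_\alpha^i(\lambda)$, exactness presents $\ker c_\alpha^i(\lambda)$ as a quotient of $H^i(V_\alpha^\lambda)$ and $\coker c_\alpha^i(\lambda)$ as a submodule of $H^{i+1}(V_\alpha^\lambda)$, so every composition factor of $\ker c_\alpha^i(\lambda)$ or $\coker c_\alpha^i(\lambda)$ occurs in some $H^j(V_\alpha^\lambda)$; reading \eqref{lseq2} the same way, every composition factor of $H^j(V_\alpha^\lambda)$ occurs in $H^j(C_\alpha^\lambda)$ or in $H^{j-1}(Q_\alpha^\lambda)$. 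It therefore suffices to prove the auxiliary claim that \emph{if $M$ is a finite-dimensional $B$-module all of whose $T$-weights lie in $X_\alpha^\lambda$, then every $G$-composition factor of every $H^j(M)$ occurs in $H^{j'}(\nu)$ for some $\nu\in X_\alpha^\lambda$ and $j'\ge 0$}; I would prove this by induction on $\dim M$, using that for $M\ne 0$ the $T$-stable subspace $M^U$ is non-zero and so contains a one-dimensional $B$-submodule, whose weight lies in $X_\alpha^\lambda$, and then applying the short-exact-sequence fact to split off that submodule. Taking $M=C_\alpha^\lambda$ and $M=Q_\alpha^\lambda$ gives (1).

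For (2), write $a=\langle\lambda+\rho,\alpha^\vee\rangle$; then $s_\alpha\cdot\lambda=\lambda-a\alpha$, so an element $\nu=s_\alpha\cdot\lambda+rp\alpha$ of $X_\alpha^\lambda$ equals $s_{\alpha,r}\cdot\lambda=\lambda-(a-rp)\alpha$ with $0<a-rp<a$. Since $\alpha\in R^+$ this gives $\nu<\lambda$, and together with $\nu=s_{\alpha,r}\cdot\lambda$ it exhibits a one-step chain $\nu\uparrow\lambda$. The standard compatibility of the strong-linkage order with passage to dominant $W$-conjugates (\cite{RAG}, II.6) then gives $\nu^+\uparrow\lambda^+$, hence $\nu^+\le\lambda^+$. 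To see the inequality is strict it is cleanest to compare norms with respect to a $W$-invariant form: with $k=a-rp$ one computes $\|\nu+\rho\|^2=\|\lambda+\rho\|^2+k(k-a)\|\alpha\|^2<\|\lambda+\rho\|^2$ because $0<k<a$, so $\nu+\rho$ and $\lambda+\rho$ are not $W$-conjugate, i.e.\ $\nu^+\ne\lambda^+$; combined with $\nu^+\le\lambda^+$ this yields $\nu^+<\lambda^+$.

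The diagram chases in (1) and the $B$-module induction are routine once the four short exact sequences \eqref{seq1}--\eqref{seq4} and the weight sets of $C_\alpha^\lambda$ and $Q_\alpha^\lambda$ are in hand. The only step requiring genuine input is in (2): the inequality $\nu<\lambda$ does not by itself give $\nu^+<\lambda^+$, so one cannot argue with the dominance order alone but must know that the full relation $\nu\uparrow\lambda$, not merely $\nu\le\lambda$, is preserved when $\nu$ and $\lambda$ are replaced by their dominant $W$-conjugates. I would take this property of the $\uparrow$-order from Jantzen's account in \cite{RAG}, II.6 rather than reprove it here; the norm computation above then disposes of strictness.
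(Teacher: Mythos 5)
Your treatment of part (1) is correct and is exactly what the paper's one-line proof (``combine (\ref{lseq1}) and (\ref{lseq2})'') has in mind: the kernel of $c_\alpha^j(\lambda)$ is a quotient of $H^j(V_\alpha^\lambda)$, the cokernel embeds into $H^{j+1}(V_\alpha^\lambda)$, the composition factors of $H^j(V_\alpha^\lambda)$ occur in $H^j(C_\alpha^\lambda)$ or $H^{j-1}(Q_\alpha^\lambda)$, and since $C_\alpha^\lambda$ and $Q_\alpha^\lambda$ are $B$-modules whose weights all lie in $X_\alpha^\lambda$, your induction via a one-dimensional $B$-submodule inside $M^U$ supplies the routine filtration argument reducing to single characters $\nu \in X_\alpha^\lambda$. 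This is the same route as the paper, just with the omitted details written out.

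Part (2), however, has a genuine gap. The ``standard compatibility of the strong-linkage order with passage to dominant $W$-conjugates'' that you invoke --- that $\nu \uparrow \lambda$ implies $\nu^+ \uparrow \lambda^+$ --- is false as a general property of $\uparrow$. Already for $SL_2$ with $p=3$: take $\lambda = -4$ and $\nu = s_\alpha\cdot\lambda - 2p\alpha = -10$; then $\nu \le \lambda$ and $\nu = s_\alpha\cdot\lambda + np\alpha$ with $n=-2$, so $\nu \uparrow \lambda$ in one step, yet $\nu^+ = 8$ and $\lambda^+ = 2$, so $\nu^+ \not\uparrow \lambda^+$ (indeed $\nu^+ > \lambda^+$). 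Of course this example violates $0 < rp < \langle\lambda+\rho,\alpha^\vee\rangle$, and that is precisely the point: the truth of (2) depends essentially on that positivity constraint, and there is no general lemma in \cite{RAG}, II.6 transporting $\uparrow$ along dominant conjugation that you could quote in its place. Passing from the one-step relation $\nu \uparrow \lambda$ at the (typically non-dominant) $\lambda$ arising in the proof of the strong linkage principle to the statement $\nu^+ \uparrow \lambda^+$ about dominant representatives is exactly the nontrivial weight calculation that the paper delegates to \cite{An80a}, Lemma 5, where it is established by an induction; your argument in effect assumes it. Your norm computation $\Vert\nu+\rho\Vert^2 = \Vert\lambda+\rho\Vert^2 + k(k-a)\Vert\alpha\Vert^2$ is correct and does show $\nu^+ \neq \lambda^+$, but it gives strictness only once $\nu^+ \le \lambda^+$ is known, which again rests on the missing lemma. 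To repair the argument, replace the appeal to a general compatibility by the precise statement (for $\alpha$ simple, $\langle\lambda,\alpha^\vee\rangle \ge 0$ and $0 < rp < \langle\lambda+\rho,\alpha^\vee\rangle$) and either cite \cite{An80a}, Lemma 5 for it, as the paper does, or prove it directly.
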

\begin{proof} (1) comes by combining the sequences (\ref{lseq1}) and (\ref{lseq2}). (2) is a weight calculation, see \cite{An80a}, Lemma 5.
\end{proof}

We shall now prove the theorem by induction with respect to the strong linkage ordering on $X^+ - \rho$. When $\lambda \in X^+-\rho$ we shall use the notation
$$ SL(<\lambda) = \{\mu \in X^+ -\rho \mid \mu \uparrow \lambda \text { and } \mu < \lambda \}.$$

Let $\lambda \in X^+-\rho$ and pick a reduced expression for $w_0$, $w_0 = s_Ns_{N-1} \cdots s_1$. Setting $\lambda_i = s_i s_{i-1} \cdots s_1 \cdot \lambda$ we have $\lambda_i^+ = \lambda$ for all $i$, and we get the following string of homomorphisms
\begin{equation}  \label{composite0}  { {H^N(\lambda_N)} \xrightarrow{c_N}  {H^{N-1}(\lambda_{N-1})} \xrightarrow{c_{N-1}} {\cdots} \xrightarrow{c_1} {H^0(\lambda_0)}}.
\end{equation}
Here $c_i = c_{\alpha_{i+1}}^{i+1}(\lambda_{i})$
 with $\alpha_{i+1}$ denoting the simple root corresponding to $s_{i+1}$ (note that $\langle \lambda_{i} + \rho, \alpha_{i+1}^\vee \rangle = \langle \lambda^+ + \rho, s_1s_2 \cdots s_{i}(\alpha_{i+1})^\vee \rangle \geq 0$ so that $ c_{\alpha_{i+1}}^{i+1}(\lambda_{i})$ exists).
 
 Actually,  we have such a string for all $j \in \Z$:
 \begin{equation}  \label{compositej}  { H^{N+j}(\lambda_N) \xrightarrow{c_N^j}  H^{N-1+j}(\lambda_{N-1}) \xrightarrow{c_{N-1}^j} \cdots \xrightarrow{c_1^j}  H^j(\lambda_0)},
\end{equation}
where $c_i^j = c_{\alpha_{i+1}}^{i+1+j}(\lambda_{i})$. Here we use the convention that $H^j(\nu) = 0$ when $j$ is negative. When $j$ is positive we notice that the first $j$ terms in (\ref{compositej}) are $0$ by (\ref{Grot}).

To start the induction assume that $\lambda$ is minimal in $X^+ - \rho$ with respect to the strong linkage relation. This means that $SL(<\lambda) = \emptyset$. But then all $X_{\alpha_i}^{\lambda_{i-1}}$ are also empty by Proposition {\ref{ker/cok} (2), i.e. the kernels and cokernels of all $c_i^j$ are $0$. We conclude that in this case the string (\ref{composite0}) consists of $N$ isomorphisms. This means that $H^N(\lambda_N) = H^i(\lambda_i) = H^0(\lambda_0)$, i.e. $H^i(\lambda_i) = L(\lambda)$ for all $i$. On the other hand, if $j \neq i$ then (\ref{compositej}) shows that $H^j(\lambda_i) = 0$.
In particular, the strong linkage principle certainly holds in this case.

Let now $\lambda \in X^+ - \rho$ be arbitrary and suppose Theorem \ref{SLP} holds for all $\mu \in SL(<\lambda)$. Suppose $L(\nu)$ is a composition factor of $H^{i+j}(\lambda_i)$ for some $0 \leq i \leq N$ and some $j \in \Z$. If $L(\mu)$ is a composition factor of one of the kernels or cokernels of one of the homomorphisms in the string (\ref{composite0}) we see from Proposition \ref{ker/cok} (1) that $L(\mu)$ is a composition factor of $H^t(\nu)$ for some $t$ and some $\nu \in X_{\alpha_r}^{\lambda_{r-1}}$ with $0 < r \leq N$. 
By (2) in the same proposition such $\nu$ all belong to $SL(< \lambda)$ and by our induction hypothesis we get in this case $\mu \uparrow \nu \uparrow \lambda$. On the other hand, if $L(\mu)$ is not a composition factor of any of these kernels or cokernels then the string (\ref{compositej}) shows,
 that we cannot have $j \neq 0$ and the string (\ref{composite0}) reveals that $L(\mu)$ must be in the image of the composite $c_1 \circ c_2 \circ \cdots \circ c_N$. But this composite is a homomorphism from $H^N(w_0 \cdot \lambda)$ to $H^0(\lambda)$. As $H^0(\lambda)$ has simple socle $L(\lambda)$ and (by Serre duality) $H^N(w_0 \cdot \lambda)$ has simple head $L(\lambda)$, we conclude that this image equals $L(\lambda)$. Hence in this case $\mu = \lambda$. 

Observe now that when we vary the reduced expression for $w_0$ we can for any $w \in W$ realize $w \cdot \lambda$ as one of the $\lambda_i$'s in our strings. As $W \cdot (X^+-\rho) = X$ we have thus  proved Theorem \ref{SLP}.

\begin{rem} \begin{enumerate}
\item As was the case with the original proof in \cite{An80a} the proof given above does not rely on Kempf's vanishing theorem (but the proof given in \cite{RAG}, Chapter II.6 does). In the preprint \cite{Anpre} we actually deduced Kempf's vanishing theorem from the strong linkage principle. However, unless we are in type $A$ this works only for $p \geq h$. The preprint was never published because it was shortly afterwards overtaken by \cite{An80b}, which requires no restrictions on $p$.
\item Since the publication of \cite{An80a} there have been some improvements and alternative proofs of the results, see e.g.  \cite{Wo1}, \cite{Wo2}, and \cite{Dot}. 
\item It should also be mentioned that we have a close analogue of the strong linkage principle for quantum groups with parameter being a root of $1$, see \cite{An03}.
\end{enumerate}
\end{rem}

\subsection{Applications}

In this section we deduce some consequences of the strong linkage principle and of its proof. The first is

\begin{cor} \label{LP} (The linkage principle)
Let $M$ be an indecomposable $G$-module. If $L(\lambda)$ and $L(\mu)$ are two composition factors of $M$ then $\mu \in W_p \cdot \lambda$.
\end{cor}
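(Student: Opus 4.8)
The plan is to derive the (weak) linkage principle as a formal consequence of the strong linkage principle for dual Weyl modules (Theorem \ref{SLP}, specialized to $i=0$, $\lambda\in X^+$), using the standard fact that every finite dimensional $G$-module is filtered by quotients of dual Weyl modules, or more precisely that its composition factors all appear in dual Weyl modules $H^0(\mu)$ with $\mu$ among the highest weights occurring in $M$. First I would recall that for $\lambda\in X^+$ the module $H^0(\lambda)$ has highest weight $\lambda$ with multiplicity one and all other weights strictly below $\lambda$; dually the Weyl module $V(\lambda)=H^0(-w_0\lambda)^*\cong H^N(w_0\cdot\lambda)$ has the same property. Hence for any finite dimensional $G$-module $M$, the weights of $M$ are a finite set, and by downward induction on the highest maximal weight $\nu$ of $M$ one shows that $[M:L(\mu)]\neq 0$ forces $\mu\le\nu'$ for some $\nu'$ which is a weight of $M$, and moreover $L(\mu)$ appears as a composition factor of $H^0(\nu')$ for some dominant weight $\nu'\le$ (highest weight of $M$): indeed, picking a maximal vector of weight $\nu$ generates a quotient of $V(\nu)$ (or a sub of $H^0(\nu)$ after dualizing), peel it off, and induct on the rest.

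Granting that reduction, the argument runs as follows. Let $M$ be indecomposable and let $L(\lambda)$, $L(\mu)$ be two composition factors. By the previous paragraph there are dominant weights $\lambda',\mu'$, each a weight of $M$, with $[H^0(\lambda'):L(\lambda)]\neq 0$ and $[H^0(\mu'):L(\mu)]\neq 0$. By Theorem \ref{SLP} applied with $i=0$ (where $\lambda^+=\lambda'$ since $\lambda'$ is already dominant, modulo the $\rho$-shift bookkeeping), we get $\lambda\uparrow\lambda'$ and $\mu\uparrow\mu'$; in particular $\lambda\in W_p\cdot\lambda'$ and $\mu\in W_p\cdot\mu'$, because each elementary step $\mu_{j-1}=s_{\beta_j}\cdot\mu_j+n_jp\beta_j$ in the definition of $\uparrow$ is precisely an application of an affine reflection $s_{\beta_j,n_j}\in W_p$. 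It therefore remains to connect $\lambda'$ and $\mu'$ through $W_p$. Here I would invoke indecomposability: the relation ``$\nu\sim\nu'$ iff $\nu'\in W_p\cdot\nu$'' partitions $X$ into $W_p$-orbits, this induces a block decomposition $M=\bigoplus_{\Omega} M_\Omega$ where $M_\Omega$ is the largest submodule all of whose composition factors $L(\eta)$ have $\eta\in\Omega\cap X^+$ — this is the standard linkage-class block decomposition, which follows once one knows $\Ext^1_G(L(\eta),L(\eta'))=0$ whenever $\eta,\eta'$ lie in different $W_p$-orbits, and the latter is itself a consequence of the strong linkage principle applied to the relevant $H^i$ or to extensions realized inside dual Weyl modules. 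Since $M$ is indecomposable it lies in a single block $M=M_\Omega$, so $\lambda',\mu'\in\Omega$, i.e. $\mu'\in W_p\cdot\lambda'$, and combining with $\lambda\in W_p\cdot\lambda'$, $\mu\in W_p\cdot\mu'$ gives $\mu\in W_p\cdot\lambda$.

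The main obstacle is the block decomposition step: to conclude from indecomposability that all composition factors lie in one $W_p$-orbit, one needs that $\Ext^1_G(L(\eta),L(\eta'))=0$ when $\eta\not\sim\eta'$, equivalently that a nonsplit extension forces strong linkage between the two weights. The cleanest route is: a nonsplit extension of $L(\eta')$ by $L(\eta)$ with, say, $\eta'$ not maximal embeds (after taking the socle/cosocle with $H^0$) into some $H^0(\nu)$ with $\nu$ dominant, so both $\eta$ and $\eta'$ are composition factors of $H^0(\nu)$, whence by Theorem \ref{SLP} both are $\uparrow\nu$ and hence in $W_p\cdot\nu$; if neither is maximal one has to be slightly more careful and pass to a dual Weyl module covering the extension, but the mechanism is the same. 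I would flag this $\Ext^1$-vanishing as the one genuinely substantive input beyond Theorem \ref{SLP}; everything else — the dual-Weyl filtration of an arbitrary finite dimensional module, the identification of $\uparrow$-steps with affine reflections, and the assembly — is routine. (In the paper this is presumably why the corollary is stated immediately after the strong linkage principle and labelled a corollary: the author intends exactly this deduction, and the block decomposition is treated as standard, cf. \cite{RAG}, II.7.)
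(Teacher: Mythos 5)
Your proposal is correct and takes essentially the same route as the paper: both reduce the corollary to the vanishing $\Ext^1_G(L(\eta),L(\eta'))=0$ for $\eta'\notin W_p\cdot\eta$ (the standard block/linkage-class decomposition step) and then deduce that vanishing from Theorem \ref{SLP} by forcing a would-be nonsplit extension to live inside a dual Weyl module --- the paper phrases this as $\Ext^1_G(L(\mu),H^0(\lambda))=0$ when $\mu\not>\lambda$ (Frobenius reciprocity plus maximality of $\lambda$, with contragredient duality handling the other ordering) followed by the long exact sequence, which is the same mechanism as your embedding of the extension into $H^0(\nu)$. Your preliminary reduction through the auxiliary dominant weights $\lambda',\mu'$ is redundant (once the block decomposition is in hand, indecomposability gives $\mu\in W_p\cdot\lambda$ directly), but it is harmless and does not affect correctness.
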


\begin{proof} It is standard to reduce this corollary to the following vanishing result
$$ \Ext^1_G(L(\mu), L(\lambda)) = 0 \text { unless } \mu \in W_p \cdot \lambda.$$
To check this we may assume that $\mu \not > \lambda$. In fact, if $\mu > \lambda$  we use the duality $\Ext_G^1(L(\mu), L(\lambda)) \simeq \Ext_G^1(L(\lambda)^*, L(\mu)^*) \simeq \Ext_G^1(L(-w_0 \cdot \lambda), L(-w_0 \cdot \mu))$ to bring us into the desired situation. 
We claim that $\Ext_G^1 (L(\mu), H^0(\lambda)) = 0$. Indeed, suppose we have an extension $0 \rightarrow H^0(\lambda) \rightarrow E \rightarrow L(\mu) \rightarrow 0$. 
Our assumption implies that $\lambda$ is a maximal weight of $E$. Hence the universal property (also known as Frobenius reciprocity, \cite{RAG} Proposition I.3.4) gives a splitting $E \rightarrow H^0(\lambda)$. 

Now the short exact sequence $0 \rightarrow L(\lambda) \rightarrow H^0(\lambda) \rightarrow H^0(\lambda)/L(\lambda) \rightarrow 0$ gives a surjection $\Hom_G(L(\mu), H^0(\lambda)/L(\lambda)) \rightarrow \Ext^1_G(L(\mu), L(\lambda))$. Hence if $\Ext^1_G(L(\mu), L(\lambda)) \neq 0$ we deduce that $L(\mu)$ is a composition factor of $H^0(\lambda)$. By Theorem \ref{SLP} this implies that $\mu$ is strongly linked to $\lambda$. In particular, $\mu \in W_p \cdot \lambda$.
\end{proof}

\subsection{On the $G$-structure of $H^1(\lambda)$}

The bound on the composition factors of cohomology modules of line bundles obtained in Theorem \ref{SLP} was the first general result on the $G$-structure of $H^i(\lambda)$. 
It holds for all $G, i, \lambda$, and $p$. Slightly before this result we obtained, however, another general result on the $G$-module structure of $H^1(\lambda)$, valid for all $G, \lambda$ and $p$, namely the following (recall that we have already given the complete vanishing behavior of $H^1(\lambda)$ in Theorem \ref{H1}).

\begin{thm} (\cite{An79b}, Theorem 3.5) All non-zero $ H^1(\lambda)$ have simple socles.
\end{thm}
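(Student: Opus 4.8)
The plan is to reduce the statement to a local computation on a $\mathbb{P}^1$-fibration, exactly in the spirit of the proof of Theorem \ref{H1}. First I would recall that by Kempf's vanishing theorem (Theorem \ref{Kempf}), if $H^1(\lambda)\neq 0$ then $\lambda$ is not dominant, so there is a simple root $\alpha$ with $\langle\lambda,\alpha^\vee\rangle<0$. For such $\alpha$, the restriction of $\mathcal L(\lambda)$ to the fibers $P_\alpha/B\simeq\mathbb P^1$ has cohomology only in degree $1$, so the Leray spectral sequence for $G/B\to G/P_\alpha$ degenerates and gives a $G$-isomorphism $H^1(\lambda)\simeq H^0\bigl(H^1_\alpha(\lambda)\bigr)$, where $H^1_\alpha(\lambda)$ is the $B$-module (in fact $P_\alpha$-module) $H^1(P_\alpha/B,\mathcal L(\lambda))$. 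The problem thus becomes: show that $H^0\bigl(H^1_\alpha(\lambda)\bigr)$ has simple socle.

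The key step is to analyze the $B$-module structure of $E:=H^1_\alpha(\lambda)$ by an explicit $SL_2$-computation. Writing $m=-\langle\lambda,\alpha^\vee\rangle-1\geq 0$, one finds that $E$ is, up to a twist by the character $\lambda+(m+1)\alpha$ (or rather $s_\alpha\cdot\lambda$), the dual of the $(m+1)$-dimensional Weyl module for the Levi $L_\alpha$, with its $B$-action. The crucial structural fact I would extract is that $E$ has a \emph{unique maximal weight}, namely $s_\alpha\cdot\lambda$, occurring with multiplicity one, and more precisely that the $B$-socle of $E$ is the one-dimensional module $s_\alpha\cdot\lambda$. Granting this, Frobenius reciprocity (\cite{RAG}, Proposition I.3.4) gives that any nonzero $G$-submodule $0\neq M\subseteq H^0(E)$ must contain a nonzero $B$-homomorphism from its socle into $E$, hence must ``see'' the $B$-socle $s_\alpha\cdot\lambda$ of $E$; concretely, $\Hom_G\bigl(N,H^0(E)\bigr)=\Hom_B(N,E)$ for any $G$-module $N$, and since the simple $B$-socle of $E$ forces every nonzero $B$-submodule of $E$ to meet the line $k_{s_\alpha\cdot\lambda}$, every nonzero $G$-submodule of $H^0(E)$ contains the socle of $H^0\bigl(k_{s_\alpha\cdot\lambda}\bigr)$, which is simple by Theorem \ref{Chev} (once one checks $(s_\alpha\cdot\lambda)^+$ is dominant — which it is, being the highest weight of the nonzero module $H^1(\lambda)$). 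Therefore $H^1(\lambda)$ has simple socle.

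I expect the main obstacle to be the claim that the $B$-module $E=H^1_\alpha(\lambda)$ has \emph{simple} $B$-socle equal to $s_\alpha\cdot\lambda$ — the $SL_2$- (or rather $P_\alpha$-) computation itself is elementary, but one must be careful that the nontrivial action of the unipotent radical of $B$ lying outside $P_\alpha$ does not create extra socle constituents; this is where the detailed inspection of weights, and the fact that all weights $\mu$ of $E$ satisfy $\mu\leq s_\alpha\cdot\lambda$ with equality only on a line, does the work. A secondary point requiring care is the vanishing $H^i_\alpha(\lambda)=0$ for $i\neq 1$ and the resulting collapse of the Leray spectral sequence, so that $H^1(\lambda)$ really is $H^0$ of a single $B$-module rather than an extension; this is immediate from $\langle\lambda,\alpha^\vee\rangle<0$ but is essential to the argument. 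For the full details of the $B$-structure of $H^1_\alpha(\lambda)$ I would refer to \cite{An79b}.
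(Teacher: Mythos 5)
Your opening reduction is fine and is exactly how the paper's proof begins: if $H^1(\lambda)\neq 0$ one picks a simple root $\alpha$ with $\langle\lambda,\alpha^\vee\rangle\leq -2$ and gets $H^1(\lambda)\simeq H^0(G/P_\alpha, H^1_\alpha(\lambda))$. But the ``crucial structural fact'' on which your argument rests is false. With the paper's conventions ($R^+$ consists of the roots of the Borel opposite to $B$, so $\Soc_B M = M^{R_u(B)}$), Serre duality on $P_\alpha/B\simeq \P^1$ gives $E:=H^1_\alpha(\lambda)\simeq H^0_\alpha(-\lambda-\alpha)^*$, i.e.\ $E$ is the \emph{Weyl} module for the Levi of $P_\alpha$ with highest weight $s_\alpha\cdot\lambda$ (with $R_u(P_\alpha)$ acting trivially), not the dual of a Weyl module as you assert. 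In particular the highest-weight line $k_{s_\alpha\cdot\lambda}$ generates $E$ and so lies in $E^{R_u(B)}$ only when $\dim E=1$; the $B$-socle contains instead the lowest-weight line $k_{\lambda+\alpha}$, and it is in general not one-dimensional: for $p=2$ and $\langle\lambda,\alpha^\vee\rangle=-8$, the restriction of $E$ to the $SL_2$ attached to $\alpha$ is the Weyl module $\Delta(6)$, whose invariants under the unipotent radical of $B$ form a three-dimensional space (weights $\lambda+\alpha$, $\lambda+2\alpha$, $\lambda+4\alpha$). This misidentification also erases the real content of the theorem: the nontrivial base case is precisely that Weyl modules for $SL_2$ have simple socle; if $E$ were a costandard (dual Weyl) module for the Levi the statement would be nearly trivial.

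Independently of this, the inference from ``simple $B$-socle of $E$'' to ``simple $G$-socle of $H^0(E)$'' is not established. Frobenius reciprocity gives $\Hom_G(L(\mu),H^0(E))\simeq \Hom_B(L(\mu),E)$, but a nonzero $B$-homomorphism $L(\mu)\to E$ need not be injective, so its image is merely a $B$-submodule of $E$ meeting $\Soc_B E$; that does not bound the set of $\mu$ with $\Hom_B(L(\mu),E)\neq 0$, which is what simplicity of the socle requires. Your concluding claim that every nonzero $G$-submodule of $H^0(E)$ ``contains the socle of $H^0(k_{s_\alpha\cdot\lambda})$'' is unsupported --- no map into or out of that module is produced, and $s_\alpha\cdot\lambda$ is in general not dominant, so $H^0(s_\alpha\cdot\lambda)=0$. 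The paper circumvents both problems by keeping the full $P_\alpha$-structure: $\Hom_G(L(\mu),H^1(\lambda))\simeq\Hom_{P_\alpha}(L(\mu),H^1_\alpha(\lambda))$; a weight comparison shows the kernel $K_\alpha(\mu)$ of $L(\mu)\twoheadrightarrow L_\alpha(\mu)$ has no weights in common with $H^1_\alpha(\lambda)$, whence $\Hom_{P_\alpha}(L(\mu),H^1_\alpha(\lambda))\simeq\Hom_{P_\alpha}(L_\alpha(\mu),H^1_\alpha(\lambda))$, and the theorem then follows from the rank-one fact that the $P_\alpha$-socle of the Weyl module $H^1_\alpha(\lambda)$ is simple. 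To repair your sketch you would need to supply exactly these two ingredients.
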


\begin{proof} Here we offer a slightly simpler proof than the original one given in \cite{An79b}, Section 3. As observed in Section 4.3 if $H^1(\lambda) \neq 0$ then there exists a simple root $\alpha$ such that $\langle \lambda, \alpha^\vee \rangle < -1$ and 
\begin{equation} \label{red to H^0} H^1(\lambda) \simeq H^0(G/P_\alpha/B, H^1_\alpha(\lambda)).
\end{equation}
Let $\mu \in X^+$. Then we get from (\ref{red to H^0}) that $\Hom_G(L(\mu), H^1(\lambda)) \simeq \Hom_{P_\alpha} (L(\mu),H^1_\alpha(\lambda))$. Now we note that under the natural $P_\alpha$-homomorphism from $H^0(\mu)$ to $H^0_\alpha(\mu)$ the simple $G$-submodule $L(\mu) \subset H^0(\mu)$
maps onto the simple $P_\alpha$-submodule $ L_\alpha(\mu) \subset H^0_\alpha(\mu)$. Denoting by $K_\alpha(\mu) \subset L(\mu)$ the kernel of this map, we get the short exact sequence of $P_\alpha$-modules
\begin{equation} \label{simple G to P} 0 \to K_\alpha(\mu) \to L(\mu) \rightarrow L_\alpha(\mu) \to 0. \end{equation}
Here we have $\Hom_B(L(\mu), \nu) \simeq \Hom_G(L(\mu), H^0(\nu))$ is non-zero if and only if $\nu = \mu$. Therefore $\Hom_{P_\alpha}(L(\mu), H^1_\alpha(\lambda))$ is non-zero 
only if $\mu $ belong to the set of weights $\{\lambda + \alpha, \lambda + 2 \alpha, \cdots \lambda + (- \langle \lambda,\alpha^\vee \rangle -1)\alpha \}$ of $H^1_\alpha(\lambda)$. 
But for none of these $\mu$ do $K_\alpha(\mu)$ and $H^1_\alpha(\lambda)$ have any weights in common. This implies 
that $\Hom_{P_\alpha}(K_\alpha(\mu), H^1_\alpha(\lambda)) = 0$. 
We therefore get from (\ref{simple G to P} )
that $\Hom_{P_\alpha}(L_\alpha(\mu), H^1_{\alpha}(\lambda)) \simeq \Hom_{P_\alpha}(L(\mu), H^1_\alpha(\lambda))$. 

So we have reduced the theorem to the $SL_2$-case, where it is an easy computation.
\end{proof}

\begin{rem}
\begin{enumerate} 
\item The proof determines the highest weight $\mu$ of $\Soc_G(H^1(\lambda))$. It also shows that $L(\mu) = \Soc_G(H^1(\lambda))$  has multiplicity $1$ as a composition factor in $H^1(\lambda)$.
\item By Serre duality the theorem is equivalent to the statement that all non-zero $H^{N-1}(\lambda)$ have simple heads.
\item In the next section we shall show that if $\lambda$ is ``generic'' (see Definition 5.17 below for this condition) inside a Weyl chamber then there is a unique $i$ for which  $H^i(\lambda) \neq 0$. Moreover, this cohomology module has simple socle and simple head.  - This result fails badly in general. In fact, already for type $B_2$, we gave an example of a $\lambda$ for which $H^2(\lambda)$ splits into a direct sum of $2$ (simple) modules,  see the last paragraph in \cite{An80a}.
\end{enumerate} \end{rem}

\subsection{Filtrations and sum formulas for cohomology modules}
In Section 5.5 we shall recall from \cite{An83}, how one can obtain Jantzen-type filtrations and sum formulas for our cohomology modules $H^i(\lambda), i \geq 0, \; \lambda \in X$. When we take $i = N$ we recover the Weyl module case proved by Jantzen \cite{Ja77} for $p \geq h$. We sketch the proof taking advantage also of the methods developed in \cite{AKu}.

First we need to discuss $\Z$-versions of the cohomology modules. So we let $G_\Z$ denote the Chevalley group corresponding to $G$. This is a connected reductive algebraic group scheme over $\Z$ and $G$ is obtained from $G_\Z$ by base change from $\Z$ to $k$. More generally, if $A$ is any commutative ring we denote by $G_A$ the base change of $G_\Z$ to $A$ (so in particular $G_k = G$). We have likewise the subgroup schemes $T_A$ and $B_A$ corresponding to $T$ and $B$.

Let $M$ be a $B_A$-module (like all modules we shall consider $M$ is finitely generated over $A$). Then we denote by $H^i_A(M)$  the
$i$'th cohomology module of the vector bundle on $G_A/B_A$ associated to $M$. Alternatively, $H_A^i$ is also the $i$'th right derived functor of induction from $B_A$ to $G_A$. Then we have the following universal coefficient theorem, cf. \cite{Bou}, Ch §4.

\begin{thm} \label{UC} Let $M$ be a $B_\Z$-module which is free (of finite rank) over $\Z$. Then for each $i\geq0$ we have a short exact sequence of $A$-modules
$$ 0 \to H^i_\Z(M) \otimes_\Z A \to H^i_A(M \otimes_\Z A) \to \Tor_1^\Z(H^{i+1}_\Z(M), A) \to 0.$$
\end{thm}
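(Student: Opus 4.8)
The plan is to realize both sides as the cohomology of a single complex of free $\Z$-modules that is compatible with base change, and then to invoke the standard homological universal coefficient theorem over the PID $\Z$. First I would recall that for the Chevalley group scheme $G_\Z$ with Borel $B_\Z$, induction is the functor $\Ind_{B_\Z}^{G_\Z}(-) = (\mathcal{O}(G_\Z)\otimes_\Z -)^{B_\Z}$, and its right derived functors are computed by a functorial cochain complex $C^\bullet(-)$ obtained by resolving the $B_\Z$-module $M$ by the relatively injective (hence $\Ind$-acyclic) modules $\mathcal{O}(B_\Z)^{\otimes_\Z j}\otimes_\Z M$ and applying $\Ind_{B_\Z}^{G_\Z}$; using the tensor identity $\Ind_{B_\Z}^{G_\Z}(\mathcal{O}(B_\Z)\otimes_\Z N)\cong \mathcal{O}(G_\Z)\otimes_\Z N$ one sees that
$$C^j(M) \;\cong\; \mathcal{O}(G_\Z)\otimes_\Z \mathcal{O}(B_\Z)^{\otimes_\Z j}\otimes_\Z M,\qquad j\ge 0,$$
so that $H^i_\Z(M) = H^i(C^\bullet(M))$. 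The same recipe over any commutative ring $A$ produces $C^\bullet_A(-)$ with $H^i_A(-) = H^i(C^\bullet_A(-))$.

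Next come the two inputs specific to our situation. The first is freeness: the coordinate rings $\mathcal{O}(G_\Z)$ and $\mathcal{O}(B_\Z)$ are free over $\Z$ (Chevalley--Kostant), so when $M$ is $\Z$-free every term $C^j(M)$ is a free $\Z$-module. The second is base change: since $\mathcal{O}(G_\Z)$, $\mathcal{O}(B_\Z)$ and $M$ are $\Z$-free, one has $\mathcal{O}(G_\Z)\otimes_\Z A = \mathcal{O}(G_A)$, $\mathcal{O}(B_\Z)\otimes_\Z A = \mathcal{O}(B_A)$, and all tensor products in sight commute with $-\otimes_\Z A$; hence there is a natural isomorphism of complexes $C^\bullet(M)\otimes_\Z A \cong C^\bullet_A(M\otimes_\Z A)$. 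Combining the two, $H^i_A(M\otimes_\Z A) = H^i\!\left(C^\bullet(M)\otimes_\Z A\right)$ while $H^i_\Z(M) = H^i(C^\bullet(M))$.

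It then remains to apply the purely homological universal coefficient theorem: for a cochain complex $K^\bullet$ of flat modules over a PID and any module $A$ there is a natural short exact sequence
$$0 \to H^i(K^\bullet)\otimes_\Z A \to H^i(K^\bullet\otimes_\Z A) \to \Tor_1^\Z\!\left(H^{i+1}(K^\bullet), A\right) \to 0,$$
the $\Tor$-term appearing in cohomological degree $i+1$ because of the cohomological indexing; see \cite{Bou}. Taking $K^\bullet = C^\bullet(M)$ and substituting the identifications of the previous paragraph yields precisely the asserted exact sequence of $A$-modules.

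The only point that needs care — and the step I expect to be the real (though modest) obstacle — is producing the complex $C^\bullet(M)$ with $\Z$-\emph{free} terms in a way that is simultaneously functorial and compatible with base change; once this is in hand the rest is formal. Two minor bookkeeping remarks: $C^\bullet(M)$ has free but not finitely generated terms, which is harmless because the homological universal coefficient sequence holds for arbitrary complexes of flat modules over a PID and the outer terms are then automatically finitely generated from the finiteness (and vanishing in degrees $i>N$, by Grothendieck vanishing) of $H^\bullet_\Z(M)$; alternatively one may first replace $C^\bullet(M)$ by a quasi-isomorphic bounded complex of finite free $\Z$-modules, compatibly with base change, before invoking the homological statement.
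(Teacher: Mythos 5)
Your argument is correct and is essentially the proof the paper has in mind: the paper gives no argument beyond the citation to Bourbaki A.X \S 4, and the standard proof behind that citation (see also Jantzen, \emph{Representations of Algebraic Groups}, I.4.18) is exactly your route — compute $R^\bullet\Ind_{B_A}^{G_A}$ by the Hochschild-type complex with terms $\mathcal{O}(G_A)\otimes\mathcal{O}(B_A)^{\otimes j}\otimes M_A$, note its terms are $\Z$-free and compatible with base change, and apply the algebraic universal coefficient theorem over the PID $\Z$. The one point you flag (a base-change-compatible complex of flat terms, with flatness of the coboundaries automatic over a PID) is indeed the only point needing care, and your treatment of it is adequate.
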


In particular, we may in Theorem \ref{UC} take $M$ to be the rank $1$ $\Z$-module $\Z$ with $B_\Z$-structure given by $\lambda \in X$. We shall abuse notation and write just $\lambda$ for this $B_\Z$-module. Moreover, we let $H_t^i(\lambda)$ denote the torsion part of $H^i_\Z(\lambda)$ for any $i \geq 0$, and we set $H_f^i(\lambda) = H_\Z^i(\lambda)/H_t^i(\lambda)$, the free quotient of $H_\Z^i(\lambda)$. Then Theorem \ref{UC} combined with Bott's theorem (Theorem \ref{Bott}) give the following results.

\begin{cor}\label{torsion} Let $\lambda \in X$. 
\begin{enumerate}
\item If $\lambda$ is singular then $H^i(\lambda) = H^i_t(\lambda)$ for all $i$.
\item If $\lambda$ is regular and $w \in W$ is the unique element with $w \cdot \lambda \in X^+$ then 
$$ H^i_\Z(\lambda) = H_t^i(\lambda) \text { for all } i \neq \ell(w),$$
and 
$$ H_f^{\ell(w)}(\lambda) \text { has rank equal to } \dim_\Q H^0_\Q(w\cdot \lambda),$$
\item for each $i \geq 0$ we have a short exact sequence 
$$  0 \to H^i_\Z(\lambda) \otimes_\Z k \to H^i(\lambda \otimes_\Z k) \to \Tor_1^\Z(H^{i+1}_\Z(\lambda), k) \to 0.$$
\end{enumerate}
\end{cor}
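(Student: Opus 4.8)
The plan is to derive Corollary~\ref{torsion} as an essentially formal consequence of the universal coefficient theorem (Theorem~\ref{UC}) together with Bott's theorem (Theorem~\ref{Bott}) applied over $\Q$. The key preliminary observation is that $H^i_\Z(\lambda)$ is a finitely generated $\Z$-module, so it decomposes (non-canonically) as $H^i_f(\lambda)\oplus H^i_t(\lambda)$ with $H^i_f(\lambda)$ free and $H^i_t(\lambda)$ finite; moreover base change to $\Q$ is exact on such modules and kills torsion, so $H^i_\Z(\lambda)\otimes_\Z\Q\simeq H^i_f(\lambda)\otimes_\Z\Q$, and on the other hand the universal coefficient sequence over $A=\Q$ has vanishing $\Tor_1$ term (since $\Q$ is flat over $\Z$), giving $H^i_\Z(\lambda)\otimes_\Z\Q\simeq H^i_\Q(\lambda\otimes_\Z\Q)=H^i_\Q(\lambda)$. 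Thus the rank of $H^i_f(\lambda)$ equals $\dim_\Q H^i_\Q(\lambda)$ for every $i$.

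First I would dispose of part (3): it is literally the universal coefficient sequence of Theorem~\ref{UC} specialized to the rank-one $B_\Z$-module $\lambda$ and to $A=k$, using the identification $H^i_k(\lambda\otimes_\Z k)=H^i(\lambda\otimes_\Z k)$. No further argument is needed. Next, for part (1): if $\lambda$ is singular then by the characteristic-zero computation just made, $\dim_\Q H^i_\Q(\lambda)=0$ for all $i$ by Bott's theorem, so $H^i_f(\lambda)=0$, i.e.\ $H^i_\Z(\lambda)=H^i_t(\lambda)$ for all $i$. For part (2): if $\lambda$ is regular with $w\cdot\lambda\in X^+$ and $w$ unique, Bott's theorem gives $H^i_\Q(\lambda)=0$ for $i\neq\ell(w)$ and $H^{\ell(w)}_\Q(\lambda)\simeq H^0_\Q(w\cdot\lambda)$. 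Feeding this into the rank identity above yields $H^i_\Z(\lambda)=H^i_t(\lambda)$ for $i\neq\ell(w)$ and $\operatorname{rank}H^{\ell(w)}_f(\lambda)=\dim_\Q H^0_\Q(w\cdot\lambda)$, as claimed.

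The only point requiring a little care — and the closest thing to an obstacle — is the justification that base change commutes with cohomology in the two ways I am using it: that $H^i_A(M\otimes_\Z A)$ is what Theorem~\ref{UC} computes (already granted by the statement of that theorem), and that for the flat ring $A=\Q$ the $\Tor_1$ term genuinely vanishes so that the sequence degenerates to an isomorphism. The latter is immediate from flatness of $\Q$ over $\Z$ (equivalently, $\Q$ is torsion-free), so $\Tor_1^\Z(\,\cdot\,,\Q)=0$. One should also note in part (2) that "$H^i_\Z(\lambda)$ has rank $0$'' is the same as "$H^i_\Z(\lambda)$ is torsion'' is the same as "$H^i_\Z(\lambda)=H^i_t(\lambda)$'', since $\Z$ has global dimension~$1$ and a finitely generated $\Z$-module is torsion iff it is finite. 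Assembling these remarks gives all three statements with no essential computation.
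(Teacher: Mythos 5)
Your proposal is correct and follows exactly the route the paper intends: Corollary \ref{torsion} is stated there as an immediate consequence of Theorem \ref{UC} combined with Bott's theorem, and your write-up simply supplies the routine details (flatness of $\Q$ over $\Z$ killing the $\Tor_1$ term, ranks of the free parts matching $\dim_\Q H^i_\Q(\lambda)$, and specializing $A=k$ for part (3)). Nothing further is needed.
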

Combining Theorem \ref{UC} with Grothendieck vanishing (Theorem \ref{Grot}), respectively Kempf vanishing (Theorem \ref{Kempf}) we get

\begin{cor} \label{free} Let $\lambda \in X$.
 \begin{enumerate}
\item $H^i_\Z(\lambda) = 0$ for all $i > N$, and $H^N_\Z(\lambda)$ is free over $\Z$ of rank equal to $\dim_\Q H^0_\Q(w_0\cdot\lambda)$. Moreover, $H^N_\Z(\lambda) = 0 = H^N_k(\lambda)$ unless $ \lambda \in X^-$.
\item If $\lambda \in X^+$ then $H^i_\Z(\lambda) = 0$ for all $i > 0$ and $H^0_\Z(\lambda)$ is free of rank equal to $\dim_\Q H^0_\Q(\lambda)$.
\end{enumerate}
\end{cor}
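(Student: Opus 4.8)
The plan is to derive both parts from the universal coefficient theorem (Theorem~\ref{UC}), reading off each assertion about the $\Z$-module $H^i_\Z(\lambda)$ from the corresponding facts about its field reductions $H^i_A(\lambda)$, where the vanishing theorems ((\ref{Grot}) for part (1) and Theorem~\ref{Kempf} for part (2)) and the Weyl character formula apply. The general input used throughout is that each $H^i_\Z(\lambda)$ is finitely generated over $\Z$ (finiteness of coherent cohomology for the projective $\Z$-scheme $G_\Z/B_\Z$). Together with Theorem~\ref{UC} this yields the two elementary principles I would invoke repeatedly: a finitely generated abelian group that becomes $0$ after $\otimes_\Z\Q$ and after $\otimes_\Z\Fu$ for every prime $p$ is $0$; and if $\dim_A\!\big(H^i_\Z(\lambda)\otimes_\Z A\big)$ equals one and the same number $d$ for $A=\Q$ and for every $A=\Fu$, then $H^i_\Z(\lambda)$ is free of rank $d$.

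For part (1) I would first prove $H^i_\Z(\lambda)=0$ for $i>N$ by descending induction on $i$, the base case holding since $H^i_\Z(\lambda)=0$ for $i\gg 0$ by Grothendieck vanishing over $\Z$. In the inductive step, Theorem~\ref{UC} gives for a field $A$ the exact sequence $0\to H^i_\Z(\lambda)\otimes_\Z A\to H^i_A(\lambda)\to\Tor_1^\Z(H^{i+1}_\Z(\lambda),A)\to 0$; the $\Tor$-term vanishes by the inductive hypothesis and the middle term vanishes by (\ref{Grot}) over $A$, so $H^i_\Z(\lambda)\otimes_\Z A=0$ for all fields $A$, whence $H^i_\Z(\lambda)=0$. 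Knowing that $H^{N+1}_\Z(\lambda)=0$, Theorem~\ref{UC} in degree $N$ then gives $H^N_\Z(\lambda)\otimes_\Z A\cong H^N_A(\lambda)$ for every field $A$. By Serre duality (\ref{Serre}), $H^N_A(\lambda)\cong H^0_A(-\lambda-2\rho)^*$, which is $0$ unless $-\lambda-2\rho\in X^+$, i.e.\ unless $\lambda\in X^-$; for $\lambda\notin X^-$ we thus get $H^N_k(\lambda)=0$ and, by the first principle, $H^N_\Z(\lambda)=0$. For $\lambda\in X^-$, the common value $\dim_A H^N_A(\lambda)=\dim_A H^0_A(-\lambda-2\rho)$ is independent of $A$ by Kempf's theorem (the Weyl-formula consequence recorded in Remark~\ref{rem on Kempf}(1)), and equals $\dim_\Q H^0_\Q(w_0\cdot\lambda)$ by Bott's theorem (Theorem~\ref{Bott}) in characteristic $0$ since $\ell(w_0)=N$; so by the second principle $H^N_\Z(\lambda)$ is free of rank $\dim_\Q H^0_\Q(w_0\cdot\lambda)$.

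Part (2) runs along the same lines but more quickly. For $\lambda\in X^+$, Kempf's vanishing theorem (Theorem~\ref{Kempf}, together with Bott's theorem in characteristic $0$) gives $H^i_A(\lambda)=0$ for all $i>0$ and all fields $A$, so the descending-induction argument with Theorem~\ref{UC} gives $H^i_\Z(\lambda)=0$ for all $i>0$. Then Theorem~\ref{UC} in degree $0$ gives $H^0_\Z(\lambda)\otimes_\Z A\cong H^0_A(\lambda)$ for every field $A$, and since $\dim_A H^0_A(\lambda)=\dim_\Q H^0_\Q(\lambda)$ is independent of $A$ by the Weyl character formula, $H^0_\Z(\lambda)$ is free of rank $\dim_\Q H^0_\Q(\lambda)$.

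The homological bookkeeping is routine; the point carrying the real content is the freeness in the ``non-trivial'' degrees, which rests precisely on the characteristic-independence of $\dim H^0_A(\mu)$ for $\mu\in X^+$ --- exactly the first consequence of Kempf's theorem --- used directly for $i=0$ and, for $i=N$, fed through Serre duality. (Everything else, such as the identification of $\dim_\Q H^0_\Q(-\lambda-2\rho)$ with $\dim_\Q H^0_\Q(w_0\cdot\lambda)$ for $\lambda\in X^-$, is an elementary weight computation.)
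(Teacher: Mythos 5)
Your proposal is correct and follows exactly the route the paper intends: the paper derives Corollary \ref{free} simply by combining Theorem \ref{UC} with Grothendieck vanishing (\ref{Grot}) for part (1) and with Kempf vanishing (Theorem \ref{Kempf}) for part (2), and your write-up is a faithful elaboration of that, with the descending induction, the finite-generation argument over $\Z$, and the Serre duality/Bott identification of the rank all filled in correctly.
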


We shall need some more notation in order to state and prove the sum formulas. Recall that $p$ is the characteristic of $k$. If $n \in \Z$ is non-zero we shall denote by $\nu_p(n)$ the highest exponent $s$ of $p$ such that $p^s$ divides $n$. Likewise, if $M$ is a finite abelian group of order $|M|$ we write $\nu_p(M) = \nu_p(|M|)$. Then $\nu_p(M)$ is also the length of $M$ as $\Z$-module. 

Suppose $M$ is a finite $T_\Z$-module. Then we define 
$$ \Char^p(M) = \sum_{\lambda \in X} \nu_p(M_\lambda) e^\lambda \in \Z[X]$$
and call this the $p$-character of $M$.
Note that $\Char^p$ is additive on short exact sequences of finite $T_\Z$-modules.

Finally, if $M$ is a finite $B_\Z$-module then we define
$$ E^p (M) = \sum_i (-1)^i \Char^p(H^i_\Z (M)).$$
$E^p$ is also additive on short exact sequences of finite $B_\Z$-modules.

The following proposition is the key to calculating $E^p(M)$. For any $\lambda \in X$ we denote by $\chi(\lambda)$ the Weyl character at $\lambda$, i.e. $\chi(\lambda) = \sum_{i=0}^N (-1)^i \Char H^i(\lambda)$.  We have $\chi (w \cdot \lambda) = (-1)^{\ell(w)} \chi(\lambda)$ for any $w \in W$.

\begin{prop} \label{Z/n}
Let $n \in \Z_{>0}$ and $\lambda \in X$. Then
$$ E^p (\lambda \otimes_\Z \Z/n) = \nu_p(n) \chi(\lambda).$$
\end{prop}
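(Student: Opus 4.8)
The plan is to compute $E^p(\lambda\otimes_\Z \Z/n)$ by resolving $\Z/n$ as a $B_\Z$-module and exploiting the additivity of $E^p$ on short exact sequences. First I would observe that the trivial $B_\Z$-module $\Z$ (with $\lambda$-twist suppressed) sits in the short exact sequence
$$ 0 \to \Z \xrightarrow{\;n\;} \Z \to \Z/n \to 0 $$
of $B_\Z$-modules; tensoring with the one-dimensional $B_\Z$-module $\lambda$ gives
$$ 0 \to \lambda \xrightarrow{\;n\;} \lambda \to \lambda\otimes_\Z \Z/n \to 0. $$
The long exact sequence in cohomology $H^\bullet_\Z(-)$ then reads, in each degree $i$,
$$ \cdots \to H^i_\Z(\lambda) \xrightarrow{\;n\;} H^i_\Z(\lambda) \to H^i_\Z(\lambda\otimes_\Z\Z/n) \to H^{i+1}_\Z(\lambda) \xrightarrow{\;n\;} H^{i+1}_\Z(\lambda) \to \cdots $$
so that $H^i_\Z(\lambda\otimes_\Z\Z/n)$ is an extension of $\ker\big(n\colon H^{i+1}_\Z(\lambda)\to H^{i+1}_\Z(\lambda)\big)$ by $\mathrm{coker}\big(n\colon H^i_\Z(\lambda)\to H^i_\Z(\lambda)\big)$, both of which are finite $T_\Z$-modules.

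Next I would pass to $p$-characters. Since $\Char^p$ is additive on short exact sequences of finite $T_\Z$-modules, in each weight $\mu$ and each degree $i$ we get
$$ \Char^p\!\big(H^i_\Z(\lambda\otimes_\Z\Z/n)\big) = \Char^p\big(\mathrm{coker}(n\text{ on }H^i_\Z(\lambda))\big) + \Char^p\big(\ker(n\text{ on }H^{i+1}_\Z(\lambda))\big). $$
Now for a finitely generated abelian group $A$, write $A \cong \Z^r \oplus A_t$ with $A_t$ finite. Then $\mathrm{coker}(n\colon A\to A)\cong (\Z/n)^r \oplus \mathrm{coker}(n\text{ on }A_t)$ and $\ker(n\colon A\to A)\cong \ker(n\text{ on }A_t)$; moreover $|\mathrm{coker}(n\text{ on }A_t)| = |\ker(n\text{ on }A_t)|$ since $A_t$ is finite. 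Hence
$$ \nu_p\big(\mathrm{coker}(n\text{ on }A)\big) - \nu_p\big(\ker(n\text{ on }A)\big) = r\,\nu_p(n) = (\rank A)\,\nu_p(n). $$
Applying this with $A = H^i_\Z(\lambda)_\mu$ (which is $H^i_\Z(\lambda\otimes_\Z\Z/n)$ does involve degree $i$ and $i+1$, so I take the alternating sum): summing over $i$ with signs $(-1)^i$, the boundary contributions from consecutive degrees telescope, and what survives is
$$ E^p(\lambda\otimes_\Z\Z/n) = \sum_i (-1)^i\Big(\nu_p(n)\rank_\Z H^i_\Z(\lambda)_\mu\Big)e^\mu \ \text{summed over }\mu = \nu_p(n)\sum_i(-1)^i \Char\big(H^i_f(\lambda)\big). $$
Finally, by Corollary~\ref{free} and Corollary~\ref{torsion} (or just the universal coefficient theorem plus Bott's theorem over $\Q$), the free part $H^i_f(\lambda)$ has character equal to $\Char H^i_\Q(\lambda)$, and by definition $\sum_i(-1)^i\Char H^i_\Q(\lambda) = \chi(\lambda)$ since the Weyl character is characteristic-independent. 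This yields $E^p(\lambda\otimes_\Z\Z/n) = \nu_p(n)\chi(\lambda)$.

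The main obstacle I anticipate is purely bookkeeping: keeping straight that $H^i_\Z(\lambda\otimes_\Z\Z/n)$ receives contributions from $H^i_\Z(\lambda)$ and $H^{i+1}_\Z(\lambda)$ simultaneously, so that one must be careful when forming the alternating sum $\sum_i(-1)^i\Char^p(\cdot)$ that the kernel-terms and cokernel-terms from adjacent degrees combine correctly (with the right signs) into the single telescoped quantity $\nu_p(n)\sum_i(-1)^i\rank H^i_\Z(\lambda)$. The only genuinely substantive input beyond this linear-algebra telescoping is the identification of the ranks $\rank_\Z H^i_\Z(\lambda)$ with the (characteristic-free) Weyl-character data, which is exactly what Corollaries~\ref{torsion} and~\ref{free} provide; no hard analysis is needed.
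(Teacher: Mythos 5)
Your argument is correct, and it enters the same way the paper does (via the long exact cohomology sequence attached to $0 \to \lambda \xrightarrow{n} \lambda \to \lambda\otimes_\Z\Z/n \to 0$), but from there your bookkeeping is genuinely different. The paper first splits into the singular case (everything finite, $\chi(\lambda)=0$) and the regular case, where Corollary \ref{torsion} isolates the unique degree $i$ with non-finite $H^i_\Z(\lambda)$; it then runs a diagram chase with two auxiliary exact sequences of torsion modules to show that the alternating sum collapses onto $\Char^p\bigl(H^i_f(\lambda)\otimes_\Z\Z/n\bigr)=(-1)^i\nu_p(n)\chi(\lambda)$. You instead treat all degrees uniformly: each $H^i_\Z(\lambda\otimes_\Z\Z/n)$ is an extension of $\ker\bigl(n\ \text{on}\ H^{i+1}_\Z(\lambda)\bigr)$ by $\coker\bigl(n\ \text{on}\ H^i_\Z(\lambda)\bigr)$, the elementary identity $\nu_p(\coker)-\nu_p(\ker)=(\rank)\,\nu_p(n)$ applied weight space by weight space turns the telescoped alternating sum into $\nu_p(n)\sum_i(-1)^i\Char H^i_\Q(\lambda)=\nu_p(n)\chi(\lambda)$, the last identification needing only Theorem \ref{UC} (the Tor terms cancel in the Euler characteristic), not the full regular/singular structure from Bott. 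What your route buys is uniformity and economy: no case distinction, no diagram, and strictly weaker input at the end. What the paper's route buys is that it makes explicit the torsion exact sequences and the object $H^i_f(\lambda)\otimes_\Z\Z/n$, which is precisely the machinery reused immediately afterwards in the proof of Theorem \ref{JSF}, so the redundancy there is deliberate. One small point you glossed over: the telescoping needs the kernel term in degree $0$ to vanish; this is automatic, since left exactness of $H^0_\Z$ makes $n$ injective on $H^0_\Z(\lambda)$ (equivalently, $H^0_\Z(\lambda)$ is torsion-free, cf. Corollary \ref{free}), and the sum terminates above degree $N$ by Grothendieck vanishing, so both endpoints are harmless.
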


\begin{proof} The short exact sequence of $B_\Z$-modules $0 \to \lambda \xrightarrow {n} \lambda \to \lambda \otimes_\Z \Z/n \to 0$ gives the long exact cohomology sequence
$$ 0 \to H^0_\Z(\lambda) \xrightarrow {n} H^0_\Z(\lambda) \to H^0_\Z(\lambda \otimes \Z/n) \to H^1_\Z(\lambda) \xrightarrow {n}  H^1_\Z(\lambda) \to  H^1_\Z(\lambda \otimes \Z/n) \to \cdots $$
If $\lambda$ is singular, then this sequence consists entirely of finite $\Z$-modules and the result is clear (note that $\chi(\lambda) = 0$ for $\lambda$ singular). If $\lambda$ is regular, there exists (by Corollary \ref{torsion} (1)) a unique $i$ such that $H^i_\Z(\lambda)$ is not finite. For this $i$ we have the following diagram 
\[
\begin{tikzcd}
H^{i-1}_\Z(\lambda \otimes \Z/n) \arrow[d] \arrow[r]
& H^i_t(\lambda) \arrow[r]  \arrow[d]
&H^i_t(\lambda) \arrow[d] \arrow[r] 
& Q_t \arrow[d]\\
H^{i-1}_\Z(\lambda \otimes \Z/n) \arrow[r] \arrow[d]
&H^i_\Z(\lambda) \arrow[r]\arrow[d]
&H^i_\Z(\lambda) \arrow[r]\arrow[d]
&Q \arrow[d]\\
0 \arrow[r] 
&H^i_f(\lambda) \arrow[r]
& H^i_f(\lambda) \arrow[r] &Q_f.
\end{tikzcd}
\]
Here the top left vertical arrow is the identity map, the three middle horizontal arrows are multiplication by $n$, and $Q_t$,  $Q$, and $Q_f$ denote the cokernels of multiplication by $n$ on the modules in question.

Now the top sequence in the above diagram is the end of an exact sequence of torsion modules. It gives 
$$\Char^p(Q_t) =
 \sum_{j\geq 0}(-1)^j \Char^p (H^{i-1-j}(\lambda \otimes_\Z \Z/n)).$$ 
 Likewise $Q$ is the first term in the exact sequence
 $$ 0 \to Q \to H^{i}_\Z(\lambda \otimes_\Z \Z/n) \to H^{i+1}_\Z (\lambda) \to H^{i+1}_\Z (\lambda) \to  H^{i}_\Z(\lambda \otimes_\Z \Z/n) \to \cdots , $$
 which also consists of torsion modules, cf. Corollary \ref{torsion}. Therefore we get
  $$\Char^p (Q) = \sum_{j \geq 0} (-1)^j \Char^p(H^{i+j}(\lambda \otimes _\Z \Z/n)).$$ 
  As the right column in the above diagram is a short exact sequence we deduce that $\Char (Q_f) = \Char^p(Q) - \Char^p(Q_t) = (-1)^i \sum_{j } (-1)^j \Char^p(H^j(\lambda \otimes_\Z \Z/n)) = (-1)^i E^p (\lambda \otimes_\Z \Z/n)$.  The proposition thus follows as the above formulas show that $Q_f = 
 H^i_f(\lambda) \otimes_\Z \Z/n$ has $p$-character equal to $ \nu_p(n) (-1)^i\chi(\lambda)$. 

\end{proof}

Let now $\lambda \in X^+$ and set 
$$ \Delta(\lambda) = H^N(w_0\cdot \lambda) \text { and } \nabla(\lambda) = H^0(\lambda).$$
These are the Weyl module and the dual Weyl module with highest weight $\lambda$. They both have $\Z$-forms, namely $\Delta_\Z(\lambda) 
= H^N_\Z(w_0 \cdot \lambda)$, respectively $\nabla_\Z(\lambda) = H^0_\Z(\lambda)$. According to Corollary \ref{free} we have $\Delta_\Z(\lambda) \otimes_\Z \Q \simeq H^N_\Q(w_0 \cdot \lambda) \simeq H^0_\Q(\lambda) \simeq \nabla_\Z(\lambda) \otimes \Q$.

Choose a reduced expression $w_0 = s_N \cdots s_1s_0$ for $w_0$ and as in Section 5.1 set $\lambda_i = s_i s_{i-1} \cdots s_1 \cdot \lambda$. We then get a long exact sequence analogous to (\ref{lseq1}) 
$$ \Delta_\Z(\lambda) = H^N_\Z(\lambda_N) \to \cdots H^{i+1}_\Z(\lambda_{i+1}) \to H^i_\Z(\lambda_i) \to \cdots \to H^0_\Z(\lambda_0) = \nabla_\Z(\lambda).$$

In the notation above $H^i_t(\lambda_i)$ is the torsion submodule of $H^i_\Z(\lambda_i)$ and $H_f^i(\lambda_i )= H^i_\Z(\lambda_i)/H^i_t(\lambda_i)$ is the free quotient. If $j \neq i$ then $H^j_\Z(\lambda_i)$ is a torsion module, cf. Corollary \ref{torsion}. 
Clearly, each of the homomorphisms $c_\Z^i(\lambda): H^{i+1}_\Z(\lambda_{i+1}) \to H_\Z^i(\lambda_i)$ induces a homomorphism $c_f^i(\lambda) : H_f^{i}(\lambda_{i}) \to H_f^{i-1}(\lambda_{i-1})$. We set $c_f(\lambda) = c_f^1(\lambda) \circ c_f^2(\lambda) \circ \cdots \circ c_f^N (\lambda): \Delta_\Z(\lambda) \to \nabla_\Z(\lambda)$.
As in Section 5.1 we see that on the $\lambda$ weight space each $c_f^i(\lambda)$ is an isomorphism. Hence the same is true for $c_f(\lambda)$. This means in particular, that up to signs $c_f(\lambda)$ is independent of the reduced expression we have chosen for $w_0$, and it implies that $c_f(\lambda) \otimes_\Z k$ is a non-zero homomorphism from $\Delta (\lambda)$ to $\nabla(\lambda)$.

\begin{thm} \label{JSF} (The Jantzen filtration and sum formula for Weyl modules.)
Let $\lambda \in X^+$. The homomorphism $c_f(\lambda): \Delta_\Z(\lambda) \to \nabla_\Z(\lambda)$ induce a filtration  of $\Delta (\lambda)$
 $$ 0 = \Delta^{r+1}(\lambda) \subset  \Delta^{r}(\lambda) \subset \cdots \subset  \Delta^{1}(\lambda) \subset  \Delta^{0}(\lambda)= \Delta (\lambda),$$
 which satisfies
 \begin{enumerate} 
 \item $ \Delta (\lambda)/ \Delta^{1}(\lambda) = L(\lambda)$
 \item $ \sum_{j=1}^r \Char  \Delta^{j}(\lambda) =
 - \sum_{\beta \in R^+} \sum_{0 < m  < \langle \lambda + \rho, \beta^\vee \rangle } \nu_p(m) \chi(\lambda - m\beta)$.
 \end{enumerate}
 \end{thm}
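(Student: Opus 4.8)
The plan is to mimic the classical Jantzen argument, but carried out entirely at the level of the $\mathbb{Z}$-form $c_f(\lambda)\colon \Delta_\Z(\lambda)\to\nabla_\Z(\lambda)$ of the chain of connecting homomorphisms $c_\Z^i$, and then to evaluate the resulting Euler characteristic of $p$-characters via Proposition \ref{Z/n}. First I would recall the general construction of the Jantzen filtration from a map of free $\Z$-modules $\phi\colon M\to N$ whose reduction $\phi\otimes_\Z\Q$ is an isomorphism: fix a prime $p$ and a $p$-adic valuation on $\Z_{(p)}$, then set $M^j=\{m\in M\mid \phi(m)\in p^j N + pM\text{-image-conditions}\}$, more precisely $M^j=\phi^{-1}(p^j N)\cap M$ reduced mod $p$. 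This gives a descending filtration of $M\otimes_\Z k$ with $M^0=M\otimes k$, and the standard ``elementary-divisor'' bookkeeping (Smith normal form of $\phi$, counting valuations of the diagonal entries weight-space by weight-space) yields the sum formula $\sum_{j\geq 1}\Char M^j = \Char^p(\operatorname{coker}\phi)$. Applied to $\phi=c_f(\lambda)$ this immediately gives a filtration of $\Delta(\lambda)$ with $\sum_{j\geq1}\Char\Delta^j(\lambda)=\Char^p(\operatorname{coker} c_f(\lambda))$, and part (1) follows because $c_f(\lambda)\otimes_\Z k$ is, up to scalar, the canonical nonzero map $\Delta(\lambda)\to\nabla(\lambda)$, whose image is $L(\lambda)$ (its socle, resp. head, is $L(\lambda)$ by Theorem \ref{Chev} and Serre duality (\ref{hom})); hence $\Delta(\lambda)/\Delta^1(\lambda)=L(\lambda)$.

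So everything comes down to computing $\Char^p(\operatorname{coker} c_f(\lambda))$, and this is where the chain decomposition $c_f(\lambda)=c_f^1(\lambda)\circ\cdots\circ c_f^N(\lambda)$ does the work. Writing each $c_f^i(\lambda)$ as the map induced on free quotients by $c_\Z^i(\lambda)\colon H^{i+1}_\Z(\lambda_{i+1})\to H^i_\Z(\lambda_i)$, I would use the long exact sequences (\ref{lseq1}) and (\ref{lseq2}) over $\Z$ — built from the four short exact $B_\Z$-sequences (\ref{seq1})--(\ref{seq4}), all of which make sense integrally — to identify the kernel and cokernel of each $c_\Z^i(\lambda)$, up to finite modules supported on the weight set $X_{\alpha_{i+1}}^{\lambda_i}$, with subquotients of $\bigoplus_j H^j_\Z(\nu)$ for $\nu\in X_{\alpha_{i+1}}^{\lambda_i}$. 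Combined with Corollary \ref{torsion} (for $\nu$ regular only one $H^j_\Z(\nu)$ is infinite, the rest are torsion; for $\nu$ singular all are torsion) this pins $\operatorname{coker} c_f^i(\lambda)$ down to a finite $\Z$-module, and additivity of $\Char^p$ along the long exact sequences gives $\Char^p(\operatorname{coker} c_f^i(\lambda)) = -E^p\bigl(\bigoplus_{\nu\in X_{\alpha_{i+1}}^{\lambda_i}} (\text{appropriate finite }B_\Z\text{-quotient})\bigr)$. The crucial input is that the relevant finite $B_\Z$-module associated with each weight $s_{\alpha_{i+1}}\cdot\lambda_i + rp\alpha_{i+1}$ in $X_{\alpha_{i+1}}^{\lambda_i}$ is $(s_{\alpha_{i+1}}\cdot\lambda_i+rp\alpha_{i+1})\otimes_\Z \Z/p^{\,?}$ with the right torsion order read off from the $SL_2$-computation on $P_{\alpha_{i+1}}/B$, so that Proposition \ref{Z/n} converts $E^p$ of it into a multiple of $\chi(s_{\alpha_{i+1}}\cdot\lambda_i+rp\alpha_{i+1})$.

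Assembling the contributions of all $i=1,\dots,N$ and reindexing: as $i$ runs over a reduced word for $w_0$, the weights $\nu$ that occur, after applying the appropriate Weyl-group element to bring them to the dominant chamber (using $\chi(w\cdot\mu)=(-1)^{\ell(w)}\chi(\mu)$), sweep out exactly the pairs $(\beta,m)$ with $\beta\in R^+$ and $0<m<\langle\lambda+\rho,\beta^\vee\rangle$, each contributing $\nu_p(m)$ copies of $\chi(\lambda-m\beta)$ with an overall sign; this is the same combinatorics underlying Proposition \ref{ker/cok} and the strong linkage proof, and I would carry it out exactly as in \cite{An80a}, Lemma 5, together with the sign analysis from \cite{AKu}. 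That yields $\sum_{j\geq1}\Char\Delta^j(\lambda) = -\sum_{\beta\in R^+}\sum_{0<m<\langle\lambda+\rho,\beta^\vee\rangle}\nu_p(m)\,\chi(\lambda-m\beta)$, which is (2). The main obstacle I anticipate is the bookkeeping of signs and multiplicities in the last step — verifying that the connecting maps in the long exact sequences contribute with the correct sign so that cokernel contributions add rather than cancel, and that no weight is double-counted when two different reduced words (or two roots $\beta$) produce overlapping weight sets; controlling this cleanly is exactly what the methods of \cite{AKu} are designed for, and I would lean on them rather than re-derive the cancellation by hand.
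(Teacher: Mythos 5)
Your route is the paper's route: you build the filtration from $c_f(\lambda)^{-1}(p^j\nabla_\Z(\lambda))$, obtain (1) from the fact that any nonzero homomorphism $\Delta(\lambda)\to\nabla(\lambda)$ has image $L(\lambda)$, identify $\sum_{j\geq 1}\Char\Delta^j(\lambda)$ with $\Char^p(\coker c_f(\lambda))$ by elementary divisors, factor $c_f(\lambda)$ through the chain of maps $c_f^i(\lambda)$ (each injective because it is an isomorphism over $\Q$), and convert the $SL_2$-level cokernels into Weyl characters via Proposition \ref{Z/n}, with the reindexing $\beta_{i+1}=s_1\cdots s_i(\alpha_{i+1})$ sweeping out $R^+$. (A small point: over $\Z$ only three of the four sequences are needed, since the integral map $V_{\alpha,\Z}^{\mu}\to H^0_{\alpha,\Z}(\mu-\alpha+\rho)\otimes(-\rho)$ is injective; its cokernel $Q_{\alpha,\Z}^{\mu}$ has weight spaces $\Z/j$ at $\mu-j\alpha$ for all $0<j<\langle\mu+\rho,\alpha^\vee\rangle$, by (\ref{cok}), rather than $p$-power cyclic pieces at the weights $s_\alpha\cdot\mu+rp\alpha$; only its $p$-character matters, and that is what feeds Proposition \ref{Z/n}.)

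The genuine gap is in the step where you claim that additivity along the long exact sequences gives $\Char^p(\coker c_f^i(\lambda))=-E^p(\text{finite module supported on }X_{\alpha_{i+1}}^{\lambda_i})$ for each $i$ separately. That per-step identity is false in general: the integral long exact sequence linking $H^{i+1}_\Z(\lambda_{i+1})$, $H^i_\Z(\lambda_i)$ and $H^\bullet_\Z(Q_{\alpha_{i+1},\Z}^{\lambda_i})$ also involves the torsion submodules $H^j_t(\lambda_i)$ and $H^j_t(\lambda_{i+1})$ of the intermediate cohomology modules, and the correct identity reads, up to the sign bookkeeping, $\Char^p(Q_f^{i+1})=\pm E^p(Q_{\alpha_{i+1},\Z}^{\lambda_i})+\bigl(E_t^p(i+1)-E_t^p(i)\bigr)$ where $E_t^p(i)=\sum_{j\geq 0}(-1)^j\Char^p(H^j_t(\lambda_i))$. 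These torsion modules are unknown in general (cf. Remark \ref{problems} and \cite{An86a}), so they cannot be evaluated step by step; the proof only closes because when you sum over $i=0,\dots,N-1$ the unknown terms telescope to $E_t^p(N)-E_t^p(0)$, which vanishes since the end terms $H^0_\Z(\lambda_0)=\nabla_\Z(\lambda)$ and $H^N_\Z(\lambda_N)=\Delta_\Z(\lambda)$ are torsion free by Corollary \ref{free} — this is exactly where Kempf vanishing enters the argument. The obstacle you single out (signs and possible double counting among the Weyl character terms, to be handled as in \cite{AKu}) is comparatively harmless; the missing idea is this telescoping cancellation of the intermediate torsion and its reliance on Corollary \ref{free}.
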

 
 \begin{proof} We set $\Delta_\Z^j(\lambda) = c_f(\lambda)^{-1}(p^r \nabla_\Z(\lambda))$. Denoting by $\pi: \Delta_\Z(\lambda) \to \Delta(\lambda)$ the natural homomorphism we then define a filtration of $\Delta (\lambda)$ by setting
 $$  \Delta^{j}(\lambda) = \spa_k (\pi(\Delta_\Z^j(\lambda)) \subset \Delta (\lambda).$$
 This is clearly a finite filtration, i.e. there exists an $r\geq 0$ such that  $\Delta^{r+1}(\lambda) = 0$. Moreover, by the observation just above the theorem we see that (1) holds, because $\Hom_G(\Delta(\lambda), \nabla(\lambda)) = k$ and any non-zero $G$-homomorphism (like $c_f(\lambda) \otimes_\Z 1$) from $\Delta(\lambda)$ to $\nabla(\lambda)$ has image $L(\lambda)$.
 
To prove (2) we notice first that the left hand side of (2) equals $\Char^p(Q)$ where $Q$ is the cokernel of $c_f(\lambda)$. This is a well known computation, see e.g. \cite{RAG}, Section II.8.11. Now each $c_f^i(\lambda)$ is injective, 
because by Bott's theorem it becomes an isomorphism after  tensoring with $\Q$. Hence the additivity of $\Char^p$ gives $\Char^p (Q) = \sum_{i=1}^N \Char^p(Q_f^i)$ where $Q_f^i$ denotes the cokernel of $c_f^i(\lambda)$. 
Letting $Q^i$ be the cokernel of $c_\Z^i(\lambda)$ and $Q^i_t$ be the cokernel of the restriction of $c_\Z^i(\lambda)$ to $H_t^i(\lambda_i)$ we have
$$ \Char^p(Q^i_f) = \Char^p(Q^i) - \Char^p(Q_t^i).$$
We shall now compute the terms on the right hand side of this equation by using the techniques from the proof of Proposition \ref{Z/n} combined with a $\Z$-variation of the methods from Section 5.1.

Let $\mu \in X$ and $\alpha \in S$ satisfy $\langle \mu, \alpha^\vee \rangle \geq 0$. When working over $\Z$ the 4 short exact $B$-sequences in Section 5.1 ``simplify'' to the following 3 short exact $B_\Z$-sequences (using analogous notation to the one in Section 5.1)
\begin{equation} \label{Z-1}
0 \to K_{\alpha, \Z}^\mu \to H^0_{\alpha, \Z}(\mu + \rho) \otimes_\Z -\rho \to \mu \to 0,
\end{equation}
\begin{equation} \label{Z-2} 0 \to s_\alpha \cdot \mu \to K_{\alpha. \Z}^\mu \to V_{\alpha, \Z}^\mu \to 0,
\end{equation}
and
\begin{equation}\label{Z-3}
0 \to V_{\alpha, \Z}^\mu \to H^0_{\alpha, \Z}(\mu -\alpha + \rho) \otimes_\Z -\rho \to Q_{\alpha, \Z}^\mu \to 0.
\end{equation}
The difference when working over $\Z$ is that here the natural $B_\Z$-homomorphism  $ V_{\alpha, \Z}^\mu \to H^0_{\alpha, \Z}(\mu -\alpha + \rho) \otimes_\Z -\rho$ is injective.

We shall need the following observation (an $SL_2$-computation) about the torsion module $Q_{\alpha, \Z}^\mu$.
\begin{equation} \label{cok} \text {The weight spaces of  $Q_{\alpha, \Z}^\mu$ are $\Z/j \otimes_\Z (\mu - j \alpha), \; j = 1, 2, \cdots , \langle \mu + \rho, \alpha^\vee \rangle - 1.$}
\end{equation}
 
 Just as in Section 5.1 we see from (\ref{Z-1}) that $H^i(\mu) \simeq H^{i+1} (K_{\alpha, \Z}^\mu)$ and from  (\ref{Z-3}) that $H_\Z^{i+1} (V_{\alpha, \Z} ^\mu ) \simeq H^i_\Z(Q_{\alpha, \Z}^\mu)$ . 
 
 Therefore we get by combining this with (\ref{Z-2}) the long exact sequence 
 $$ \cdots \to H^{i+1}_\Z(s_\alpha \cdot \mu) \to H_\Z^i(\mu) \to H^i_{\Z} (Q_{\alpha, \Z}^\mu) \to \cdots .$$
 
 We now apply this sequence with $\mu = \lambda_i$ and $\alpha = \alpha_{i+1}$, the simple root associated to $s_{i+1}$. Remembering that  $H^{j+1}_\Z(\lambda_{i+1})$ and $H^j_\Z(\lambda_i)$  are torsion modules for all $j \neq i$ we see that $Q_t^{i+1}$ is the last term in the exact sequence 
$$ \cdots \to H^{i}_t(\lambda_{i+1}) \to H^{i-1}_t(\lambda_i) \to H^{i-1}_\Z(Q_{\alpha_i, \Z}^{\lambda_i}) \to H^{i+1}_t (\lambda_{i+1}) \to  H^{i}_t (\lambda_{i}) \to Q_t^{i+1} \to 0,$$
 whereas $Q^{i+1}$ is the first term in the exact sequence
 $$ 0 \to Q^{i+1} \to H^{i}_\Z(Q_{\alpha_i, \Z}^{\lambda_i})   \to   H^{i+2}_t (\lambda_{i+1}) \to  H^{i+1}_\Z(\lambda_i) \to \cdots .$$
 These sequences consist entirely of torsion modules and we get
 $$ \Char^p (Q_t^{i+1}) = \sum_{j \geq 0} (-1)^j ( \Char^p(H^{i-j}_t(\lambda_i)) - \Char^p(H^{i+1-j}_t(\lambda_{i+1})) +  \Char^p(H^{i-1-j}_t(Q_{\alpha_{i+1}, \Z}^{\lambda_i}))), $$
while
$$  \Char^p (Q^{i+1}) = \sum_{j \geq 0} (-1)^j (\Char^p(H^{i+j}_t(Q_{\alpha_{i+1}, \Z}^{\lambda_i})) -  \Char^p(H^{i+2+j}_t(\lambda_{i+1})) +  \Char^p(H^{i+1+j}_t(\lambda_i))).$$
We get from this
$$ \Char^p Q_f^i = (-1)^i(E^p(Q_{\alpha_{i+1}, \Z}^{\lambda_i}) + \sum_{j \geq 0} (-1)^j (\Char^p(H^j_t(\lambda_{i+1} )) - \Char^p(H_t^j (\lambda_i) )).$$ 
 
 By (\ref{cok}) and Proposition \ref{Z/n} we have 
 $$ E^p(Q_{\alpha_{i+1}, \Z}^{\lambda_i}) = \sum_{j=1}^{\langle \lambda_i + \rho, \alpha_{i+1}^\vee \rangle -1} \nu_p(j)  \chi(\lambda_i - j \alpha_{i+1}) = (-1)^i \sum_{j=1}^{\langle \lambda + \rho, \beta_{i+1}^\vee \rangle -1} \nu_p(j)  \chi(\lambda - j \beta_{i+1}), $$
 where $\beta_{i+1} = s_1s_2 \cdots s_i(\alpha_{i+1})$.
 
 Write now $E_t^p(i) = \sum _{j \geq 0} (-1)^j \Char^p(H^j_t(\lambda_i)), \; i = 0, 1, \cdots ,N-1$. 
 Noting that $\{\beta_1, \beta_2, \cdots , \beta_N\} = R^+$  we get from the above
 $$ \Char^p(Q) = \sum_{i=0}^{N-1} \Char^p(Q_f^{i+1}) = - \sum_{\beta \in R^+} \sum_{j=1}^{\langle \lambda + \rho, \beta^\vee \rangle - 1} \nu_p(j) \chi(\lambda - j \beta) + \sum_{i=0}^{N-1} (E_t^p(i+1) - E_t^p(i))$$
 Here the last sum equals $E_t^p(N) -E_t^p(0) = 0$ because both $H_\Z^N(\lambda_N)$ and $H_\Z^0(\lambda_0)$ are torsionfree, see Corollary \ref{free}. Thus we have established (2).
 \end{proof}
 
 \begin{rem}
 In low rank, e.g. rank $2$ or type $A_3$, Theorem \ref{JSF} combined with the translation principle \cite{RAG}, Chapter II.7 give all the irreducible characters. The result has also proved very useful for finding or at least limiting particular composition factor multiplicities in Weyl modules in many other cases, e.g. for small characteristics (note that the above proof requires no restrictions on $p$) or for special highest weights.
 \end{rem}
 \vskip .5 cm
 Consider again $\mu \in X$ and $\alpha \in S$ with $\langle \mu, \alpha^\vee \rangle \geq 0$. In addition to the unique (up to sign) non-zero $P_{\alpha, \Z}$-homomorphism $c_\alpha^\mu : H_{\alpha, \Z}^1(s_\alpha \cdot \mu) \to H^0_{\alpha, \Z} (\mu)$ we have
 a similar homomorphism $\tilde c_{\alpha}^\mu :  H^0_{\alpha, \Z} (\mu) \to  H_{\alpha, \Z}^1(s_\alpha \cdot \mu)$, 
 which is unique once we require $\tilde c_{\alpha}^\mu \circ  c_{\alpha}^\mu = \langle \mu, \alpha^\vee \rangle ! Id_{H^1_{\alpha, \Z} (s_\alpha \cdot \mu)}$.  
 Just like $c_\alpha^\mu$ gives rise to $G_\Z$-homomorphisms $H_\Z^{i+1}(s_\alpha \cdot \mu) \to H_\Z^{i}( \mu)$ for all $i \geq 0$, so the homomorphism $\tilde c_\alpha^\mu$ induces $G_\Z$-homomorphisms in the reverse directions. Composing these maps according to an appropriate reduced expression for $w_0$ one obtains for each $w \in W$ and $\lambda \in X^+$ a $G_\Z$-homomorphism
 $$ c_f(w, \lambda) : H_f^{\ell(w)} (w\cdot \lambda) \to H_f^{N-\ell(w)}(w_0w \cdot \lambda).$$
 Arguing as in the proof of Theorem \ref{JSF} (cf. \cite{An83}) we then get
 \begin{thm} \label{SF} (Filtrations and sum formulas of cohomology modules)
 Let $\lambda \in X^+$ and set $r_\beta = \min \{r \in Z_{\geq 0} | p^r \leq \langle \lambda + \rho, \beta^\vee \rangle \}$ for each $\beta \in R^+$.  Then for   $w \in W$ the module $H_f^{\ell(w)}(w \cdot \lambda) \otimes_\Z k$ has a filtration 
 $$ 0 = F^{s+1}(w\cdot \lambda) \subset  F^{s}(w\cdot \lambda) \subset \cdots \subset  F^{1}(w\cdot \lambda) \subset  F^{0}(w\cdot \lambda) = H_f^{\ell(w)}(w \cdot \lambda) \otimes_\Z k$$
 which satisfies the sum formula
 $$\sum_{j\geq 1}\Char (F^j(w\cdot \lambda)) = (\sum_{\alpha \in R^+\cap w^{-1}R^+} r_\alpha) \chi(\lambda) + \sum_{\beta \in R^+} sgn(w(\beta)) \sum_{0 < m < \langle \lambda + \rho, \beta^\vee \rangle} \nu_p(m) \chi(\lambda - m\beta)$$
 $$  + (-1)^{\ell (w_0w)}(E_t^p(w_0w \cdot \lambda) - E_t^p(w\cdot \lambda)).$$
 \end{thm}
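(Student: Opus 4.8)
\emph{Proof sketch.} The plan is to run the argument of Theorem~\ref{JSF} in a relative form, the only genuinely new ingredient being the behaviour of the ``upward'' homomorphisms $\tilde c_\alpha^\mu$. Fix $w\in W$ and a reduced expression $w_0=s_{b_N}\cdots s_{b_1}$, and set $v_j=s_{b_j}\cdots s_{b_1}w$ and $\mu_j=v_j\cdot\lambda$, so $v_0=w$, $v_N=w_0w$, $\mu_0=w\cdot\lambda$, $\mu_N=w_0w\cdot\lambda$. At step $j$ one has $\ell(v_j)=\ell(v_{j-1})\pm1$; writing $\gamma_j\in R^+$ for $\pm v_{j-1}^{-1}(\alpha_{b_j})$, the standard enumeration of $R^+$ by a reduced word shows that the ``up-steps'' ($\ell(v_j)=\ell(v_{j-1})+1$) are indexed by the roots in $R^+\cap w^{-1}R^+$, each once, and the ``down-steps'' by those in $R^+\setminus(R^+\cap w^{-1}R^+)$, each once; in particular there are $N-\ell(w)$ up-steps and $\ell(w)$ down-steps, and $\mathrm{sgn}(w(\gamma_j))=+1$ on the up-steps, $-1$ on the down-steps. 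At a down-step I would use $c_{\alpha_{b_j}}^{\mu_j}$ to produce the $G_\Z$-map $H^{\ell(v_{j-1})}_\Z(\mu_{j-1})\to H^{\ell(v_j)}_\Z(\mu_j)$ (legitimate since $\langle\mu_j,\alpha_{b_j}^\vee\rangle\ge0$), at an up-step the homomorphism $\tilde c_{\alpha_{b_j}}^{\mu_{j-1}}$ to produce a map in the same direction (legitimate since $\langle\mu_{j-1},\alpha_{b_j}^\vee\rangle\ge0$). By Corollary~\ref{torsion} each $H^i_\Z(\mu_j)$ is torsion for $i\ne\ell(v_j)$, so each map descends to a map $H_f^{\ell(v_{j-1})}(\mu_{j-1})\to H_f^{\ell(v_j)}(\mu_j)$ of free $\Z$-modules of rank $\dim_\Q H^0_\Q(\lambda)$, which by Theorem~\ref{Bott} is an isomorphism over $\Q$, hence injective with finite cokernel. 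The composite is $c_f(w,\lambda)$; on the $\lambda$-weight line it is, up to sign, the same product of factorials for any reduced word.

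Next, exactly as for Theorem~\ref{JSF}, I would base change to $k$, set $F^j(w\cdot\lambda)=\mathrm{span}_k\,\pi\big(c_f(w,\lambda)^{-1}(p^{j}H_f^{N-\ell(w)}(w_0w\cdot\lambda))\big)$ where $\pi$ is reduction mod $p$, and check this is a finite filtration with $F^0=H_f^{\ell(w)}(w\cdot\lambda)\otimes_\Z k$ (using $c_f(w,\lambda)\otimes_\Z k\ne0$). The identity $\sum_{j\ge1}\Char F^j(w\cdot\lambda)=\Char^p Q$, with $Q=\coker c_f(w,\lambda)$, is the bookkeeping lemma already invoked in the proof of Theorem~\ref{JSF} (cf.\ \cite{RAG}, II.8.11). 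Since every step of the chain is injective with finite cokernel, additivity of $\Char^p$ gives $\Char^p Q=\sum_j\Char^p(Q^{(j)})$ with $Q^{(j)}$ the cokernel of the $j$-th step, and it remains to evaluate each $\Char^p(Q^{(j)})$.

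For a down-step this is verbatim Theorem~\ref{JSF}: feed $(\mu,\alpha)=(\mu_j,\alpha_{b_j})$ into the $B_\Z$-sequences (\ref{Z-1})--(\ref{Z-3}), evaluate $E^p(Q^{\mu_j}_{\alpha_{b_j},\Z})$ via (\ref{cok}) and Proposition~\ref{Z/n}, and rewrite $\chi(\mu_j-m\alpha_{b_j})=(-1)^{\ell(v_j)}\chi(\lambda-m\gamma_j)$ from $\mu_j-m\alpha_{b_j}=v_j\cdot(\lambda-m\gamma_j)$; the upshot is the contribution $-\sum_{0<m<\langle\lambda+\rho,\gamma_j^\vee\rangle}\nu_p(m)\,\chi(\lambda-m\gamma_j)$ plus a ``torsion boundary'' term of the shape $\pm\big(E^p_t(\mu_j)-E^p_t(\mu_{j-1})\big)$. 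For an up-step I would exploit the defining relation $\tilde c_{\alpha_{b_j}}^{\mu_{j-1}}\circ c_{\alpha_{b_j}}^{\mu_{j-1}}=n_j!\cdot\mathrm{Id}$ with $n_j=\langle\mu_{j-1},\alpha_{b_j}^\vee\rangle$: passing through the $\mathbb P^1$-fibration $G_\Z/B_\Z\to G_\Z/P_{\alpha_{b_j},\Z}$ it yields, on free parts, $c_f^{(j)}\circ c_f^{\downarrow}=n_j!\cdot\mathrm{Id}$ on $H_f^{\ell(v_j)}(\mu_j)$, where $c_f^{\downarrow}\colon H_f^{\ell(v_j)}(\mu_j)\to H_f^{\ell(v_{j-1})}(\mu_{j-1})$ is the downward auxiliary map induced by $c_{\alpha_{b_j}}^{\mu_{j-1}}$, whose cokernel is of the type just computed. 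The snake lemma gives a short exact sequence $0\to\coker c_f^{\downarrow}\to H_f^{\ell(v_j)}(\mu_j)/n_j!\to Q^{(j)}\to0$, hence $\Char^p(Q^{(j)})=\nu_p(n_j!)\,\chi(\lambda)-\Char^p(\coker c_f^{\downarrow})$. Reorganising the resulting Weyl-character string by means of the reflection symmetry $\chi(\lambda-m\beta)=-\chi\big(\lambda-(\langle\lambda+\rho,\beta^\vee\rangle-m)\beta\big)$ converts this into the contribution $r_{\gamma_j}\,\chi(\lambda)+\sum_{0<m<\langle\lambda+\rho,\gamma_j^\vee\rangle}\nu_p(m)\,\chi(\lambda-m\gamma_j)$, again modulo a torsion boundary term. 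Summing over all $j$: the $\nu_p(m)$-strings assemble, with the signs $\mathrm{sgn}(w(\beta))$, into the middle sum of the theorem; the $r_\gamma$-terms, ranging over $\gamma\in R^+\cap w^{-1}R^+$, into $\big(\sum_{\alpha\in R^+\cap w^{-1}R^+}r_\alpha\big)\chi(\lambda)$; and the torsion boundary terms telescope---using that $H^{\ell(w)}_\Z(w\cdot\lambda)$ and $H^{N-\ell(w)}_\Z(w_0w\cdot\lambda)$ carry no torsion in their unique non-vanishing degree, by Corollaries~\ref{torsion} and~\ref{free} together with Serre duality---to $(-1)^{\ell(w_0w)}\big(E^p_t(w_0w\cdot\lambda)-E^p_t(w\cdot\lambda)\big)$, as in \cite{An83}.

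I expect the main obstacle to be the up-step cokernel computation: one must identify the correct $B_\Z$-resolution realising the map induced by $\tilde c_{\alpha_{b_j}}^{\mu_{j-1}}$ (equivalently, carry out the above snake-lemma reduction cleanly) and then extract \emph{simultaneously} the $\nu_p(m)$-string and the $r_\gamma\,\chi(\lambda)$-term with mutually consistent signs, while keeping track of the factors $(-1)^{\ell(v_j)}$ and of the telescoping of the torsion terms along a gallery from $w$ to $w_0w$ that need not be minimal. Throughout one tacitly assumes $\lambda$ regular; if $\lambda\in X^+$ is singular, $\chi(\lambda)=0$ and all the relevant free cohomology modules vanish, so the statement is empty. \qedblack
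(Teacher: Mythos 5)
Your overall strategy is exactly the one the paper intends: compose the induced maps, using $c$ at the down-steps and $\tilde c$ at the up-steps of a chain from $w\cdot\lambda$ to $w_0w\cdot\lambda$ along a reduced expression for $w_0$, obtain $c_f(w,\lambda)$ on free parts, define the filtration by $c_f(w,\lambda)^{-1}(p^j\cdot\text{lattice})$, identify $\sum_{j\ge1}\Char F^j$ with $\Char^p$ of the cokernel, evaluate the cokernel step by step as in Theorem~\ref{JSF}, and let the torsion boundary terms telescope. This is precisely what the paper does, and the details it omits (``arguing as in the proof of Theorem~\ref{JSF}'') are deferred to \cite{An83}; your chain bookkeeping, the identification of up-steps with $R^+\cap w^{-1}R^+$, and the snake-lemma reduction of an up-step cokernel via $\tilde c\circ c$ are the intended argument.

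However, one step as you state it would fail: the claim that the reflection symmetry converts the up-step contribution $\nu_p(n_j!)\chi(\lambda)+\cdots$ into $r_{\gamma_j}\chi(\lambda)+\sum_{0<m<\langle\lambda+\rho,\gamma_j^\vee\rangle}\nu_p(m)\chi(\lambda-m\gamma_j)$. The coefficient of $\chi(\lambda)$ is rigid: a norm computation shows $\lambda-m\beta\in W\cdot\lambda$ only for $m=0$ or $m=\langle\lambda+\rho,\beta^\vee\rangle$, so the identities $\chi(\lambda-m\beta)=-\chi(\lambda-(\langle\lambda+\rho,\beta^\vee\rangle-m)\beta)$ only reshuffle the string terms and can never trade $\nu_p(n_j!)$ for $r_{\gamma_j}$. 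With the normalization $\tilde c_\alpha^\mu\circ c_\alpha^\mu=\langle\mu,\alpha^\vee\rangle!\,\mathrm{Id}$ your computation correctly produces the coefficient $\nu_p\big((\langle\lambda+\rho,\gamma_j^\vee\rangle-1)!\big)$ per up-step (already for $SL_2$, $w=e$, $p=2$, $\langle\lambda,\alpha^\vee\rangle=8$ this is $7$, not a logarithmic quantity), and that is what the sum formula then contains. To arrive at the displayed statement one must instead renormalize the up-maps: work over $\Z_{(p)}$ (prime-to-$p$ scalars change neither the filtration nor any $p$-character) and take $\tilde c$ with $\tilde c\circ c=p^{r_{\gamma_j}}\,\mathrm{Id}$, which is integral at $p$ because the $p$-part of each relevant binomial coefficient is at most $\langle\mu,\alpha^\vee\rangle<p^{r_{\gamma_j}}$; this is in effect the convention of \cite{An83}, and one should read the paper's (mis-typed) definition of $r_\beta$ accordingly, as the least $r$ with $p^r\ge\langle\lambda+\rho,\beta^\vee\rangle$. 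A smaller point: your parenthetical assertion that $H^{\ell(w)}_\Z(w\cdot\lambda)$ and $H^{N-\ell(w)}_\Z(w_0w\cdot\lambda)$ are torsion free is false in general -- such torsion is exactly the subject of \cite{An86a}, cf.\ Remark~\ref{problems} -- and it is also not needed: the boundary terms simply telescope to $(-1)^{\ell(w_0w)}\big(E_t^p(w_0w\cdot\lambda)-E_t^p(w\cdot\lambda)\big)$ with no vanishing at the endpoints (the vanishing only occurs in the extreme case of Theorem~\ref{JSF}).
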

 
 \begin{rem} \label{problems}
 Unfortunately, this theorem gives in general only  a filtration of a subquotient  of $H^{\ell(w)}(w\cdot \lambda)$. Moreover, in contrast to Theorem \ref{JSF} it contains no statement (1) about the top term of the filtration. Finally, in Theorem \ref{JSF} the right hand side of the sum formula is a sum of well known terms (Weyl characters), but in the present theorem the right hand side has two additional terms, which are not known in general. As we shall see in the next section (cf. in particular  Corollary \ref{GSF}) we can ``repair'' these deficiencies for {\it generic} weights. Also see \cite{An86a} for further related results on the ``torsion part'' of $H^i(w\cdot \lambda)$.
 
 \end{rem}

\subsection{Cohomology of line bundles with generic weights}
It turns out that if $\mu$ lies ``far away'' from the walls of the Weyl chambers in $X$, then the cohomology $H^i(\mu)$ behaves much better than in general (both when it comes to its vanishing behavior, and when it concerns its $G$-module structure). In this section we shall briefly discuss  this case. We follow \cite{An86b} where further details and results can be found.

Although parts of the statements below are true under milder assumptions we shall (as in \cite{An86b}) say that a dominant weight is {\it generic} if it satisfies the following conditions.

\begin{defn} Let $\lambda \in X^+ $ and write $\lambda = \lambda^0 + p^n \lambda^1$ with $\lambda^0 \in X_n$ and $\lambda^1 \in X^+$. We say that $\lambda$ is {\it generic}  if it satisfies the conditions
$$ 6 (h-1) \leq \langle \lambda^1, \beta^\vee \rangle \leq p- 6(h-1) \text { for all } \beta \in R^+.$$
\end{defn}
Note that {\it generic} weights exist only when $p > 12(h-1)$.

\vskip .5 cm

If $H$ is a subgroup(scheme) in $G$ and $M$ is an $H$-module we denote by $\Soc_H (M)$, respectively by $\Hd_H (M)$, the socle, respectively the head, of $M$. When $\lambda \in X^+$ and $w \in W$ we denote by $\lambda^w$ the weight determined by $\lambda^w = (w\cdot \lambda)^0 + p^n w^{-1} \cdot ((w\cdot \lambda)^1)$.

\begin{thm} \label{generic} (\cite{An86b}, Theorem 2.1 and Theorem 2.2) Let $\lambda \in X^+$ be {\it generic}. Then for any $w \in W$ we have
\begin{enumerate}
\item $H^i(w \cdot \lambda) = 0$ for all $i \neq \ell(w)$,
\item $\Soc_{G_n}(H^{\ell(w)}(w \cdot \lambda)) = L(\lambda^w) = \Soc_G(H^{\ell(w)}(w\cdot \lambda))$,
\item  $\Hd_{G_n}(H^{\ell(w)}(w \cdot \lambda)) = L(\lambda^{w_0w}) = \Hd_G(H^{\ell(w)}(w\cdot \lambda))$.
\end{enumerate}
\end{thm}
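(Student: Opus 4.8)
The plan is to reduce everything to the restricted case $w=e$ (so $\lambda$ itself) by a two–step induction on $\ell(w)$, using the string of homomorphisms $c_\alpha^i$ from Section~5.1, and then feed in the structure of induced modules from $G_nB$ together with linkage. First I would record the base case: for $w=e$, generic $\lambda$, one has $H^0(\lambda)=\nabla(\lambda)$ and, writing $\lambda=\lambda^0+p^n\lambda^1$, Kempf vanishing plus the tensor identity give $H^0(G/G_nB, H^0(G_nB/B,\lambda))\simeq\nabla(\lambda)$, with $\Soc_{G_n}$ controlled by $H^0(G_nB/B,\lambda)\simeq H^0(G_nB/B,\lambda^0)\otimes p^n\lambda^1$; genericity of $\lambda^1$ forces the $G_1$-socle of $\nabla(\lambda)$ to be the single simple $L(\lambda)$ (this is essentially the generic decomposition pattern of $\nabla(\lambda)$, cf. \cite{An86b}), and $\lambda^e=\lambda$. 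The head statement for $w=e$ is vacuous since $\Hd\,\nabla(\lambda)=L(\lambda)$ and $w_0e=w_0$ gives $\lambda^{w_0}$, which one must check equals the highest weight of $\Soc$ of the Weyl module $\Delta(\lambda)=H^N(w_0\cdot\lambda)$ — so in fact the $w=e$ head case is the $w=w_0$ socle case in disguise, and Serre duality (\ref{Serre}) lets me treat the three statements as two, paired under $w\leftrightarrow w_0w$.

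Next, for the inductive step I would take $w$ with $\ell(w)>0$, write $w=s_\alpha w'$ with $\ell(w')=\ell(w)-1$ and $\alpha\in S$, and apply the long exact sequence (\ref{lseq1}) with $\lambda$ replaced by $w'\cdot\lambda$ (legal since $\langle w'\cdot\lambda+\rho,\alpha^\vee\rangle=\langle\lambda+\rho,(w')^{-1}\alpha^\vee\rangle\geq0$ for $\ell(s_\alpha w')>\ell(w')$), combined with (\ref{lseq2}). The inductive hypothesis gives $H^i(w'\cdot\lambda)=0$ for $i\neq\ell(w')$ and pins down its $G_n$-socle and head. The terms $H^{j+1}(V_\alpha^{w'\cdot\lambda})$ are controlled by the weight set $X_\alpha^{w'\cdot\lambda}$, all of whose elements $\nu$ satisfy $\nu^+<\lambda$ and $\nu^+\uparrow\lambda$ (Proposition~\ref{ker/cok}(2)); genericity of $\lambda$ means these $\nu^+$ are still dominant and lie at controlled distance from walls, so by the strong linkage principle (Theorem~\ref{SLP}) together with the generic spacing estimate ($6(h-1)\le\langle\lambda^1,\beta^\vee\rangle\le p-6(h-1)$) the composition factors appearing there are "too far" to interfere with the extremal layers of $H^{\ell(w)}(w\cdot\lambda)$ that we care about. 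This should both give the vanishing $H^i(w\cdot\lambda)=0$ for $i\neq\ell(w)$ and reduce the socle/head computation to the single nonzero map $c_\alpha^{\ell(w')}(w'\cdot\lambda)\colon H^{\ell(w)}(w\cdot\lambda)\to H^{\ell(w')}(w'\cdot\lambda)$, whose kernel and cokernel are built from the $H^j(\nu)$, $\nu\in X_\alpha^{w'\cdot\lambda}$.

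Then I would identify the $G_n$-socle and head explicitly. For the socle: $\Soc_{G_n}H^{\ell(w)}(w\cdot\lambda)$ injects, via $c_\alpha$, into $\Soc_{G_n}H^{\ell(w')}(w'\cdot\lambda)=L((w'\cdot\lambda)^{w'})$ up to the interference terms, and a direct $SL_2$/minimal parabolic computation on $H^1_\alpha$ (as in the proof of Theorem in Section~5.3) shows how the $G_n$-socle transforms under passing from $w'$ to $s_\alpha w'$; matching this with the definition $\lambda^w=(w\cdot\lambda)^0+p^nw^{-1}\cdot((w\cdot\lambda)^1)$ is a bookkeeping computation with the dot-action and the Frobenius splitting $\lambda=\lambda^0+p^n\lambda^1$. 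The equality $\Soc_G=\Soc_{G_n}$ (i.e.\ that the $G_n$-socle is already $G$-stable and simple as a $G$-module) follows because a generic $G_n$-head/socle composition factor $L(\mu^0)\otimes p^n(\cdots)$ determines a unique $G$-composition factor by Steinberg's tensor product theorem. The head statement is obtained by the same argument applied to the dual via (\ref{Serre}), swapping $w\leftrightarrow w_0w$ and using $\Hd=\Soc$ of the dual.

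The main obstacle I expect is controlling the interference terms $H^j(V_\alpha^{w'\cdot\lambda})$ precisely enough: genericity must guarantee not merely that their composition factors are strongly linked to smaller weights, but that none of them can contribute to the {\it socle} or {\it head} (as $G_n$-modules, hence as $G$-modules) of $H^{\ell(w)}(w\cdot\lambda)$ — i.e.\ the relevant $\Ext^1_{G_n}$ or $\Hom_{G_n}$ groups vanish. This is exactly where the quantitative bound $6(h-1)\le\langle\lambda^1,\beta^\vee\rangle\le p-6(h-1)$ is used, and carrying it cleanly through the induction on $\ell(w)$ — keeping track of how the "distance from the walls" degrades at each step and checking it stays within the generic range for all $w\in W$ simultaneously — is the technical heart of the argument; the full details are in \cite{An86b}.
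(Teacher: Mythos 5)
There is a genuine gap, and it sits exactly where you locate the ``technical heart'' of your argument. In your inductive step the long exact sequences (\ref{lseq1}) and (\ref{lseq2}) sandwich $H^i(w\cdot\lambda)$ between $H^{i-1}(w'\cdot\lambda)$ and the cohomology of $V_\alpha^{w'\cdot\lambda}$, whose composition factors are one–dimensional of weights $\nu=w\cdot\lambda+rp\alpha\in X_\alpha^{w'\cdot\lambda}$. These $\nu$ are not of the form $u\cdot\lambda$ for the fixed generic $\lambda$, so they are not covered by your induction on $\ell(w)$; their dominant conjugates $\nu^+$ are strictly smaller than $\lambda$ but need not be generic, and the modules $H^j(\nu)$ are unknown and in general non-zero in several degrees. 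The strong linkage principle only restricts \emph{which} simples $L(\mu)$ can occur in $H^j(\nu)$, it says nothing about the degree $j$, so it cannot produce the vanishing in (1); and the bound $6(h-1)\le\langle\lambda^1,\beta^\vee\rangle\le p-6(h-1)$ does not make these interference terms vanish, nor does it give the $\Hom_{G_n}$/$\Ext^1_{G_n}$ vanishing you would need to keep their factors out of the socle and head of $H^{\ell(w)}(w\cdot\lambda)$. Deferring this to \cite{An86b} does not close the gap, because that paper contains no such lemma: it argues along entirely different lines. A smaller but real error: the $w=e$ head statement is not vacuous; $\Hd_G\nabla(\lambda)$ is not $L(\lambda)$ (that is the socle), and its simplicity with head $L(\lambda^{w_0})$ for generic $\lambda$ is itself part of the assertion to be proved.

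For comparison, the proof the paper points to (see the remarks following the theorem, and \cite{An86b}, \cite{CPS}) works at the infinitesimal level rather than through the $c_\alpha$-string. One uses $H^i(w\cdot\lambda)\simeq H^i(G/G_nB,\,H^0(G_nB/B,\,w\cdot\lambda))$ from (\ref{higher vanishing}) and a $G_nB$-composition series of $H^0(G_nB/B,\,w\cdot\lambda)$; each factor has the form $L(\nu^0)\otimes p^n\nu^1$ with $\nu^1$ within a bounded distance of $(w\cdot\lambda)^1$, and by the tensor identity it contributes $L(\nu^0)\otimes H^i(\nu^1)^{(n)}$. Genericity forces every such $\nu^1$ to lie in the $w$-conjugate of the region near the bottom alcove where Bott's theorem is known to hold in characteristic $p$ (\cite{An79a}), so each factor contributes only in degree $\ell(w)$; this yields (1) and the independence of the composition factors of $w$. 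For (2) and (3) one uses that the injective indecomposable $G_n$-modules carry a $G_nB$-structure (valid for $p\ge 2(h-1)$) and admit filtrations with quotients $H^0(G_nB/B,\mu)$ for $\mu$ close to $w\cdot\lambda$, which is what actually pins down the $G_n$-socle and head as $L(\lambda^w)$ and $L(\lambda^{w_0w})$. If you want to salvage your outline, this $G_nB$-machinery is the missing input; the $\ell(w)$-induction by itself cannot control the terms $H^j(V_\alpha^{w'\cdot\lambda})$.
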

\begin{rem} \begin{enumerate}
\item Theorem \ref{generic} (1)  says that the vanishing statement in Bott's theorem holds in characteristic $p>0$, when $\lambda$ is {\it generic} (note that this condition depends on $p$). This was first observed by Cline, Parshall and Scott in the appendix of \cite{CPS}.
\item The proof of this theorem uses the isomorphisms $H^i(\mu) \simeq H^i(G/G_nB, H^0(G_nB/B, \mu))$ resulting from (\ref{higher vanishing}). 
As all $G_nB$-composition factors $H^0(G/G_nB, \mu)$  have highest weights lying ``close to'' $\mu$ we get information about $H^i(\mu)$ by studying a $G_nB$-composition series of $H^0(G/G_nB, \mu)$. In addition to the vanishing result in (1) 
this may also be used to find the $G$-composition factors for $H^{\ell(w)}(w \cdot \lambda)$ in terms of the $G_nB$-composition factors of $H^0(G/G_nB, w\cdot \lambda)$, see \cite{CPS}, Proposition A.1(b) and \cite{An86b}, Theorem 2.1. The vanishing result in Theorem \ref{generic} (1) ensures that the set of composition factors of $H^{\ell(w)} (w \cdot \lambda)
$ is independent of $w$. 
\item The proof of the results in (2) and (3) on the socles and heads of $H^{\ell(w)}(w \cdot \lambda))$ relies on the fact that the corresponding indecomposable injective modules for $G_n$ have a $G_nB$-structure. This is known to be true for $p \geq 2(h-1)$, see \cite{RAG} , Section II.11.11, a condition much weaker than our condition for {\it generic} weights to exist.  Furthermore, the indecomposable injective $G_n$-modules  have $G_nB$-filtrations with quotients equal to $H^0(G_nB/B, \mu)$ for certain $\mu \in X$, which are again ``close enough'' to $w\cdot \lambda$. For details, see the proof of Theorem 2.2 in \cite{An86b}.
\item J.E. Humphreys conjectured in \cite{Hu86}, Conjecture p. 178, that if $\mu$ is in sufficiently general position, then it will always have a unique non-vanishing cohomology module, say $H^j(\mu)$, and this module will have simple head and socle.  Theorem \ref{generic} proves this conjecture (with the above {\it generic} conditions on $\lambda$ being our way of ensuring that all $\mu \in W\cdot \lambda$ are in ``sufficient general position'').
\item J.E. Humphreys presented in \cite{Hu86}, Section 2 several other conjectures on the module structures of the cohomology modules $H^i(\mu)$.
For instance, he suggested that when $\lambda \in X^+$ is in ``sufficient general position'' then the $G$-module structure of $H^{\ell(w)} (w \cdot \ \lambda)$ is ``similar''  to that of the dual Weyl module $H^0(\lambda)$. 
Note that by (1) it follows that for $\lambda$ {\it generic},  all $H^{\ell(w)}(w \cdot \lambda)$ have the same composition factors (counted with multiplicity). Jim suggested that these composition factors are $scrambled$ in a predictable way for each $w$. 
He also speculated that when moving $\lambda$ from a sufficiently general position towards one or more walls of the dominant chamber,  the ``missing'' occurrences of certain composition factors in $H^0(\lambda)$ that one observes  will happen in this process, should be explained by the non-standard behavior of the higher cohomology modules for the various $w \cdot \lambda$. These predictions/speculations are very much still open today (even for very large primes). See \cite{DS}, \cite{Li90} and \cite{Li91} for some partial results.
\item For some alternative ways of obtaining explicit descriptions of higher cohomology modules for line bundles on flag varieties see \cite{Don}, \cite{Liu}, and \cite{LP}.
\end{enumerate}
\end{rem} 
In addition to trying to answer some of Jim's questions and predictions we had another motivation for proving Theorem \ref{generic}, namely we wanted to improve on Theorem \ref{SF}. More precisely, we wanted to address the problems mentioned in Remark \ref{problems}. This consequence of Theorem \ref{generic} is (we use notation as above as well as from Theorem \ref{JSF}):
\begin{cor} \label{GSF}
 Let $\lambda \in X^+$ be {\it generic} and suppose $w \in W$. Then $H^{\ell(w)}(w \cdot \lambda)$ has a filtration 
 $$ 0 = F^{s+1}(w\cdot \lambda) \subset  F^{s}(w\cdot \lambda) \subset \cdots \subset  F^{1}(w\cdot \lambda) \subset  F^{0}(w\cdot \lambda) = H^{\ell(w)}(w \cdot \lambda)$$
 which satisfies 
 \begin{enumerate} 
 \item $H^{\ell(w)}(w\cdot\lambda)/ F^{1}(w\cdot \lambda) = L(\lambda^{w_0w})$,
 \item $\sum_{j\geq 1}\Char (F^j(w\cdot \lambda)) = (\sum_{\alpha \in R^+\cap w^{-1}R^+} r_\alpha)  \chi(\lambda) + \\  \hskip 2 cm  \sum_{\beta \in R^+} sgn(w(\beta)) \sum_{0 < m < \langle \lambda + \rho, \beta^\vee \rangle} \nu_p(m) \chi(\lambda - m\beta).$
\end{enumerate}
 \end{cor}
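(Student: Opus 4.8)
The plan is to run the integral machinery behind Theorem~\ref{SF} for a {\it generic} $\lambda$ and to use Theorem~\ref{generic} to repair the two defects of Theorem~\ref{SF} pointed out in Remark~\ref{problems}: the unknown ``torsion'' terms in the sum formula, and the fact that it only filters a subquotient of $H^{\ell(w)}(w\cdot\lambda)$. The key point is that genericity forces every integral cohomology module of the line bundles attached to the orbit $W\cdot\lambda$ to be $p$-torsion free.

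First I would establish this $p$-torsion freeness. Fix a reduced expression for $w_0$ as in Section~5.1; all the weights occurring are of the form $w'\cdot\lambda$ with $w'\in W$, so Theorem~\ref{generic}(1) gives $H^j(w'\cdot\lambda\otimes_\Z k)=0$ for all $j\neq\ell(w')$. Feeding this, degree by degree, into the universal coefficient sequences of Corollary~\ref{torsion}(3) yields $H^j_\Z(w'\cdot\lambda)\otimes_\Z k=0$ for $j\neq\ell(w')$ and $\Tor_1^\Z(H^{\ell(w')}_\Z(w'\cdot\lambda),k)=0$ (for $\ell(w')=0$ one uses Corollary~\ref{free} instead). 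Since $H^j_\Z(w'\cdot\lambda)$ is a torsion module for $j\neq\ell(w')$ by Corollary~\ref{torsion}, this says exactly that $H^j_\Z(w'\cdot\lambda)$ has no $p$-torsion for every $j$, whence $\Char^p(H^j_t(w'\cdot\lambda))=0$ for all $j$ and thus $E_t^p(w'\cdot\lambda)=0$. In particular $(-1)^{\ell(w_0w)}(E_t^p(w_0w\cdot\lambda)-E_t^p(w\cdot\lambda))=0$, so the sum formula of Theorem~\ref{SF} collapses to formula (2).

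Next I would promote the filtration to one of $H^{\ell(w)}(w\cdot\lambda)$ itself. Applying Corollary~\ref{torsion}(3) once more in degree $\ell(w)$, the $\Tor$-term vanishes by the above (or by Corollary~\ref{free} when $\ell(w)=N$), so $H^{\ell(w)}(w\cdot\lambda)\simeq H^{\ell(w)}_\Z(w\cdot\lambda)\otimes_\Z k$; and $p$-torsion freeness gives $H^{\ell(w)}_t(w\cdot\lambda)\otimes_\Z k=0$, so in fact $H^{\ell(w)}(w\cdot\lambda)\simeq H^{\ell(w)}_f(w\cdot\lambda)\otimes_\Z k$, and likewise with $w_0w$ in place of $w$. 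Therefore the filtration produced in Theorem~\ref{SF} on $H^{\ell(w)}_f(w\cdot\lambda)\otimes_\Z k$ is literally a filtration $F^\bullet(w\cdot\lambda)$ of $H^{\ell(w)}(w\cdot\lambda)$, and, exactly as in the proof of Theorem~\ref{JSF}, $F^1(w\cdot\lambda)=\ker\bar c$ and $F^0(w\cdot\lambda)/F^1(w\cdot\lambda)=\mathrm{im}\,\bar c$, where $\bar c=c_f(w,\lambda)\otimes_\Z k:H^{\ell(w)}(w\cdot\lambda)\to H^{N-\ell(w)}(w_0w\cdot\lambda)$ (the target identified via the previous sentence applied to $w_0w$).

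Finally I would identify the top quotient. As in the proof of Theorem~\ref{JSF}, $c_f(w,\lambda)$ is an isomorphism on a suitable weight space, so $\bar c\neq0$; then $\mathrm{im}\,\bar c$ is a non-zero quotient of $H^{\ell(w)}(w\cdot\lambda)$ and hence has simple head $L(\lambda^{w_0w})$ by Theorem~\ref{generic}(3), and it is a submodule of $H^{N-\ell(w)}(w_0w\cdot\lambda)=H^{\ell(w_0w)}((w_0w)\cdot\lambda)$ and hence has simple socle $L(\lambda^{w_0w})$ by Theorem~\ref{generic}(2) applied to $w_0w$. Moreover $L(\lambda^{w_0w})$ occurs with multiplicity one in $H^{N-\ell(w)}(w_0w\cdot\lambda)$, since its $G$-socle there is simple and $\Ext^1_G(L(\lambda^{w_0w}),L(\lambda^{w_0w}))=0$ (alternatively this is part of the composition factor count in \cite{An86b}). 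A module with simple head and simple socle both equal to $L(\lambda^{w_0w})$ and containing $L(\lambda^{w_0w})$ with multiplicity one must equal $L(\lambda^{w_0w})$; hence $F^0(w\cdot\lambda)/F^1(w\cdot\lambda)=L(\lambda^{w_0w})$, which is formula (1). The hard part will be this last step: Theorem~\ref{SF} is deliberately silent about the top of its filtration, so pinning it down needs both the socle/head input of Theorem~\ref{generic} and the non-degeneracy of $c_f(w,\lambda)$ modulo $p$; a secondary subtlety is the careful degree-by-degree bookkeeping with the universal coefficient sequence in the first step that delivers $p$-torsion freeness in every degree.
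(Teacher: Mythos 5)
Your first two steps are correct and follow what is clearly the intended route: feeding Theorem \ref{generic}(1) into the universal coefficient sequences of Corollary \ref{torsion}(3) (with Corollary \ref{free} at the two ends) gives that $H^j_\Z(w'\cdot\lambda)$ has no $p$-torsion for every $j$ and every $w'\in W$; hence all the terms $E^p_t$ in Theorem \ref{SF} vanish, and $H^{\ell(w)}(w\cdot\lambda)\simeq H^{\ell(w)}_f(w\cdot\lambda)\otimes_\Z k$, so the filtration of Theorem \ref{SF} is a filtration of the full cohomology module and its sum formula collapses to formula (2). That part is complete and correct.

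The identification of the top quotient in (1), however, contains two genuine gaps. First, your reason for $\bar c=c_f(w,\lambda)\otimes_\Z k\neq0$ (``$c_f(w,\lambda)$ is an isomorphism on a suitable weight space, as in Theorem \ref{JSF}'') fails for $w\neq w_0$. Unlike the chain defining $c_f(\lambda)$, the map $c_f(w,\lambda)$ is built partly from the reverse maps $\tilde c_\alpha^\mu$, which on extreme weight spaces act by factorials divisible by $p$; this is precisely the origin of the term $(\sum_{\alpha\in R^+\cap w^{-1}R^+}r_\alpha)\chi(\lambda)$ in the sum formula. In fact your own formula (2) refutes the claim: an extreme weight $u\lambda$ occurs with multiplicity $1$ in $\chi(\lambda)$ and in none of the $\chi(\lambda-m\beta)$, so whenever $\sum_{\alpha\in R^+\cap w^{-1}R^+}r_\alpha>0$ (the case for generic $\lambda$ and $w\neq w_0$) the extreme weight vectors lie in $F^1(w\cdot\lambda)$, i.e.\ $\bar c$ kills every extreme weight space. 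So the non-vanishing of $\bar c$ needs a genuinely different argument; it is part of the content of \cite{An83}, \cite{An86b} rather than a formal consequence of what you quote. Second, the multiplicity-one step is not established: a simple $G$-socle together with $\Ext^1_G(L(\lambda^{w_0w}),L(\lambda^{w_0w}))=0$ does not bound $[H^{\ell(w_0w)}(w_0w\cdot\lambda):L(\lambda^{w_0w})]$, since two copies of a simple module can be separated by other composition factors (a uniserial module with layers $L,M,L$ has simple socle and requires no self-extension of $L$). The multiplicity-one statement is true in the generic situation, but it comes out of the $G_nB$-composition series analysis in \cite{An86b}; as written, your ``alternatively'' is an appeal to the source of the very corollary being proved. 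Granting these two facts, your concluding step (a non-zero image with simple head and simple socle $L(\lambda^{w_0w})$, which occurs only once in the ambient module, must equal $L(\lambda^{w_0w})$) is sound, and for $w=w_0$ your argument does reduce correctly to Theorem \ref{JSF}.
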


\vskip 1 cm
\end{document}